\newtheorem{definition}{Definition}
\newtheorem{theorem}[definition]{Theorem}
\newtheorem{corollary}[definition]{Corollary}
\newtheorem{lemma}[definition]{Lemma}
\newcommand{\comment}[1]{}
\renewcommand{\comment}[1]{\footnote{{\bf Comment:} #1}}
\newcommand{\N}{\mathbb N}
\newcommand{\R}{\mathbb R}
\newcommand{\Z}{\mathbb Z}
\newcommand{\cP}{\mathcal{P}}
\newcommand{\cQ}{\mathcal{Q}}
\newcommand{\cT}{\mathcal{T}}
\newcommand{\cL}{\mathcal{L}}
\newcommand{\cH}{\mathcal{H}}
\newcommand{\sub}{\subseteq}
\newcommand{\mir}{\preceq_\ell}
\newcommand{\EPP}{Erd\H{o}s-P\'osa property}
\newcommand{\EP}{Erd\H{o}s-P\'osa}
\newcommand{\emtext}[1]{\text{\em #1}}
\newcommand{\sm}{\setminus}
\tikzstyle{hvertex}=[thick,circle,inner sep=0.cm, minimum size=2mm, fill=white, draw=black]
\tikzstyle{svertex}=[hvertex,fill=dunkelgrau]
\tikzstyle{point}=[draw,circle,inner sep=0.cm, minimum size=1mm, fill=black]
\tikzstyle{pointer}=[thick,->,shorten >=2pt,color=hellgrau]
\tikzstyle{hedge}=[draw,very thick]
\colorlet{hellgrau}{black!20!white}
\colorlet{dunkelgrau}{black!50!white}
\colorlet{hellblau}{blue!20!white}
\colorlet{hellrot}{red!40!white}
\definecolor{mossgreen}{rgb}{0.68, 0.87, 0.68}
\title{Erd\H os-P\'osa property for labelled minors: $2$-connected minors}
\author{Henning Bruhn\thanks{Partially supported by DFG, grant no.\  321904558} \and Felix Joos\thanks{The research leading to these results was partially supported by the Deutsche Forschungsgemeinschaft (DFG, German Research Foundation) -- 339933727.} \and Oliver Schaudt}
\date{}
\begin{document}

\maketitle

\begin{abstract}
\noindent
In the 1960s, Erd\H{o}s and P\'osa proved that there is a packing-covering duality for cycles in graphs.
As part of the Graph Minor project, Robertson and Seymour greatly extended this:
there is such a duality for $H$-expansions in graphs if and only if $H$ is a planar graph (this includes the previous result for $H=K_3$).
We consider vertex labelled graphs and minors and provide such a characterisation for $2$-connected labelled graphs~$H$.
In particular, this generalises results of Kakimura, Kawarabayashi and Marx [J. Combin. Theory Ser. B 101 (2011), 378--381]
and Huynh, Joos and Wollan [Combinatorica 39 (2019), 91--133] up to weaker dependencies of the parameters.
\end{abstract} 

\section{Introduction}
The most satisfactory optimisation results are arguably 
the ones that also provide a certificate that the optimum is attained.
An example is Menger's theorem stating that the maximum number of disjoint
paths between two vertex sets is achieved if there is a separator of the same size.
More generally this is  captured by the min cut max flow theorem 
or by the duality principle of linear programming. 

Not always, however, concise certificates for optimality are known or do even exist.
In such a case, an approximate certificate may be available.
There are a few classic examples for this.
One is the triangle removal lemma due to Ruzsa and Szemer\'edi~\cite{RS78}
(for every $\epsilon\in(0,1)$, there is a $\delta>0$
such that every graph on $n$ vertices contains either $\delta n^3$ triangles or $\epsilon n^2$ edges whose deleting makes the graph triangle-free) 
and its generalisations.
The importance of removal lemmas is for example demonstrated by its various applications in number theory, discrete geometry, graph theory and computer science~\cite{CF13}.

Another example is a theorem due to Erd\H{o}s and P\'osa~\cite{EP62}, which also (including its generalisations) has several applications in graph theory and computer science:
every graph $G$ that does not contain $k$ disjoint cycles, 
admits  a vertex set  of size $O(k\log k)$ that meets every cycle.
More generally, we say that a family of graphs $\cH$ has the \emph{\EPP{}} if there exists a 
function $f:\N\to \R_+$
such that for every graph $G$ and every integer $k$,
there exist $k$ disjoint subgraphs in $G$ that are isomorphic to graphs in $\mathcal H$,
or $G$ contains a vertex set $X$ of size $|X|\leq f(k)$ such that every 
subgraph of $G$ isomorphic to a graph in $\mathcal H$ meets $X$.
Thus, the class of cycles has the \EPP.

The \EPP{} has been investigated for numerous graph classes (see~\cite{RT17} for a recent survey).
One of the most striking results is the following due to Robertson and Seymour 
that is a by-product of their Graph Minor project.
It provides another characterisation of planar graphs,
which in fact does not involve any topological arguments.
(Essentially, a graph is an \emph{$H$-expansion} if it can be turned into $H$ by 
a series of edge contractions; see the next section for a formal definition.)

\begin{theorem}[Robertson and Seymour~\cite{RS86}]\label{RSmetathm}
Let $H$ be a graph. The family of $H$-expansions has the Erd\H os-P\'osa property
if and only if $H$ is planar.
\end{theorem}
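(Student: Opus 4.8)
The plan is to prove the two implications separately. Throughout I use that a graph $G$ contains a subgraph that is an $H$-expansion if and only if $H$ is a minor of $G$: the forward direction is by deleting everything outside the expansion, and the converse by taking spanning trees of the branch sets together with one connecting edge per edge of $H$. Hence packing (resp.\ covering) $H$-expansions in $G$ amounts to packing (resp.\ covering) minor models of $H$. I will carry out the argument for connected $H$; the disconnected case reduces to this one by an elementary separate argument on the components, which I will not belabour here.

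Suppose first that $H$ is planar, and fix $r = r(H)$ so large that $H$ is a minor of the $r \times r$ grid. I want a function $f$ such that every graph with no $k$ disjoint $H$-expansions has a cover of size $\le f(k)$, and I would prove this by induction on $k$; the case $k=1$ is trivial, since either $G$ contains an $H$-expansion or the empty set is a cover. The key external input is the Grid Theorem of Robertson and Seymour: there is $w = w(k,H)$ such that every graph of treewidth at least $w$ contains as a minor a grid large enough to carry $k$ pairwise disjoint copies of the $r \times r$ grid, and hence (pulling the minor models back along the grid model) $k$ pairwise disjoint $H$-expansions. So I may assume $\mathrm{tw}(G) < w$. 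Take a tree decomposition of width $< w$, root it, and let $t$ be a node, as far from the root as possible, whose down-closure subgraph $G_t$ still contains an $H$-expansion $M$ (such $t$ exists unless $G$ itself has no $H$-expansion, in which case we are done). Since $M$ is connected and does not lie in the down-closure of any child of $t$, it meets the bag $\beta(t)$, which has size $\le w$; set $Z := \beta(t)$. Then $G - Z$ has no $k-1$ disjoint $H$-expansions, since any such family together with $M$ would be $k$ disjoint $H$-expansions in $G$; by induction $G-Z$ has a cover $X'$ with $|X'| \le f(k-1)$, and $X' \cup Z$ covers all $H$-expansions of $G$ and has size $\le f(k-1) + w =: f(k)$.

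Now suppose $H$ is not planar, and assume for a contradiction that $H$-expansions have the Erd\H{o}s--P\'osa property with bounding function $f$. Let $\gamma \ge 1$ be the Euler genus of $H$ and let $\Sigma$ be a surface of Euler genus exactly $\gamma$ into which $H$ embeds. By the theory of surface embeddings (Robertson--Seymour), for every $s$ there is a graph $G$ embeddable in $\Sigma$ of face-width (representativity) so large that $G - X$ still has $H$ as a minor for every $X \subseteq V(G)$ with $|X| \le s$ — informally, a graph of very large face-width contains a highly representative grid-like structure on $\Sigma$ that is robust under deleting $O(1)$ vertices and that carries every $\Sigma$-embeddable graph as a minor. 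Apply this with $s := f(2)$. On the one hand $G$ has no two disjoint $H$-expansions: two disjoint $H$-minor models in $G$ would realise the disjoint union $H \sqcup H$ as a minor of a $\Sigma$-embeddable graph, hence as a $\Sigma$-embeddable graph itself, contradicting additivity of Euler genus, which gives $\mathrm{eg}(H \sqcup H) = 2\gamma > \gamma = \mathrm{eg}(\Sigma)$. On the other hand, the Erd\H{o}s--P\'osa property applied with $k = 2$ yields a cover $X$ with $|X| \le f(2) = s$, so $G - X$ has no $H$-expansion — contradicting $H \preceq G - X$. This proves the ``only if'' direction and completes the plan.

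The main obstacle is the planar direction, and it lies entirely in the two deep ingredients the argument rests on: the Grid Theorem, and — for the matching non-existence construction — the fact that large face-width forces all same-surface minors in a manner stable under bounded vertex deletion. Once these are available, the combinatorial part (the induction over the tree decomposition, and the Euler-genus counting) is routine. The only genuinely fiddly remaining point is pushing both directions through for disconnected $H$, which I would treat by a short reduction to its components rather than by reworking the arguments above.
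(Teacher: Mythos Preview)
The paper does not prove this theorem; it is quoted with a citation to Robertson and Seymour and used as a black box (for instance in the proof of Lemma~\ref{outerlem}). So there is no in-paper proof to compare your proposal against.

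As a standalone sketch your argument follows the standard route and is essentially sound, but one step in the planar direction is underjustified. You write that ``$G - Z$ has no $k-1$ disjoint $H$-expansions, since any such family together with $M$ would be $k$ disjoint $H$-expansions in $G$''. For this you need the $k-1$ expansions in $G-Z$ to be disjoint from $M$, and that does not follow just from $M$ meeting $Z$: the expansion $M$ may well have vertices in $G_t - Z$. What actually makes the step work is the minimality of $t$. Every component of $G_t - Z$ lies inside some child subtree $G_{t_i}$ (by the coherence property of tree decompositions), and by your choice of $t$ no $G_{t_i}$ contains an $H$-expansion; hence every connected $H$-expansion in $G - Z$ lives entirely in $G - V(G_t)$ and is therefore disjoint from $M\subseteq G_t$. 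With that sentence added, your induction closes.

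For the non-planar direction, your Euler-genus additivity plus high-face-width argument is the classical one and is correct as a sketch; the only substantive input is that deleting a bounded number of vertices from a graph of sufficiently large representativity on $\Sigma$ still leaves a graph containing every fixed $\Sigma$-embeddable graph as a minor, and this does follow from the Robertson--Seymour surface theory you cite. Your caveat about disconnected $H$ is honest: the reduction to components is routine for the planar direction but needs a line or two of care for the non-planar one.
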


Observe that this includes the class of cycles (set $H=K_3$).

There are further extensions of the theorem of Erd\H os and P\'osa.
Suppose we specify a set of labelled vertices $S$ in a graph $G$ and now we ask for cycles that contain at least one vertex from $S$ (such cycles are also known as $S$-cycles).
Kakimura, Kawarabayashi and Marx~\cite{KKM11} proved that $S$-cycles also have the \EPP{}~(see~\cite{BJS18,PW12} for further extensions).
Clearly, this is a generalisation because we may take $S=V(G)$.
In~\cite{HJW19}, Huynh, Joos and Wollan extended this to cycles with two labels.

We characterise all labelled $2$-connected graphs $H$ such that the class of labelled $H$-expansions has the \EPP{}.
For simplicity, let us assume for now that every vertex has at most one label
and we define a (sub)graph to be \emph{simply-labelled} if all vertices with a label have the same one.
For a $2$-connected graph $H$, an $H$-expansion (we assume that the branch sets are trees that are connected by at most one edge) is a labelled $H$-expansion if for every vertex $x$ of $H$ with a label $\alpha$  
and each pair $y,z$ of neighbours of $x$,
the unique path from the branch set of $y$ through the branch set of $x$ to the branch set of $z$ contains an internal vertex labelled with $\alpha$.
See Section~\ref{sec:lgraphs} for more details and a brief discussion.

\begin{theorem}\label{thm:2consimple}
Let $H$ be a labelled $2$-connected graph such that each vertex carries at most one label.
Then the labelled $H$-expansions have the \EPP{} if and only if there is an embedding of $H$ in the plane such that the boundary $C$ of the outer 
face contains all labelled vertices, and there are two simply-labelled subpaths $P,Q\subseteq C$
that cover all of~$V(C)$.
\end{theorem}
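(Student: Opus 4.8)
The plan is to derive both directions of the equivalence from the unlabelled metatheorem (Theorem~\ref{RSmetathm}) by passing to a suitable \emph{unlabelled} auxiliary graph that absorbs the label constraints. I would associate to the labelled graph $H$ a finite unlabelled graph $\widehat H$ and, to every graph $G$, an auxiliary graph $\widehat G$ obtained from $G$ by replacing each labelled vertex by a small local gadget: clone the vertex and reroute its incident edges so that the statement ``a path uses an internal vertex with label $\alpha$'' becomes ``a path traverses the $\alpha$-gadget''. The design goal is a dictionary in both directions: $k$ disjoint labelled $H$-expansions in $G$ correspond, up to a bounded multiplicative loss, to $k$ disjoint $\widehat H$-expansions in $\widehat G$, and conversely; and hitting sets transfer with only a bounded change in size. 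The construction must moreover be set up so that $\widehat H$ is planar \emph{exactly} when $H$ admits an embedding as in the statement.

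\textbf{The ``if'' direction.} Assume $H$ is embedded with outer boundary cycle $C$ carrying all labelled vertices, and $C = P \cup Q$ with $P$ simply-labelled by $\alpha$ and $Q$ simply-labelled by $\beta$. Then the two label classes are two arcs of the outer cycle, so the $\alpha$-gadget can be drawn in the outer face along $P$ and the $\beta$-gadget along $Q$ without crossings. This is exactly where $2$-connectedness of $H$ enters: it forces the branch sets of an $H$-expansion to meet the embedding in a cyclically consistent way, so that the two rim gadgets faithfully model the path-through-label conditions at the branch sets of the labelled vertices. Hence $\widehat H$ is planar, Theorem~\ref{RSmetathm} gives the \EPP{} for $\widehat H$-expansions, and pushing this statement back along the dictionary yields the \EPP{} for labelled $H$-expansions in $G$.

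\textbf{The ``only if'' direction.} Suppose the labelled $H$-expansions have the \EPP. If $H$ were non-planar, then already its unlabelled expansions would lack the \EPP{} by Theorem~\ref{RSmetathm}, so $H$ is planar. If, in addition, the embedding condition fails --- either no face of any embedding of $H$ carries all labelled vertices, or such a face exists but its boundary cannot be covered by two simply-labelled subpaths --- then the gadgets cannot be placed on a common face using at most two arcs, and the construction produces a non-planar $\widehat H$. Since $\widehat H$-expansions then lack the \EPP{} (Theorem~\ref{RSmetathm}), for every $k$ there are graphs $\widehat G_k$ with no $k$ disjoint $\widehat H$-expansions but unbounded minimum hitting sets; because the replacement $G \mapsto \widehat G$ uses only clones and short subdivided arcs, it is simple enough to invert, so each $\widehat G_k$ is realised as $\widehat G$ for some labelled graph $G_k$, and the dictionary turns $G_k$ into a witness that labelled $H$-expansions fail the \EPP{} --- a contradiction.

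\textbf{Main obstacle.} The crux is building the pair $(\widehat H,\widehat G)$ and proving that planarity of $\widehat H$ coincides with the combinatorial embedding condition on $H$. Designing the gadget is delicate because the branch set $B_x$ of a labelled vertex $x$ in an $H$-expansion is an arbitrary tree, constrained only by the requirement that every two of its attachment points be joined within $B_x$ by a path passing through a vertex labelled $\alpha$; the gadget must model ``arbitrary $\alpha$-separated trees with prescribed attachment points'' by one fixed finite structure, and must neither destroy genuine labelled expansions nor create spurious $\widehat H$-expansions, while keeping all loss factors bounded --- this is what the abstract calls ``weaker dependencies of the parameters''. Proving the planarity equivalence then needs a careful analysis of the $2$-connected planar embeddings of $H$, using Whitney-type control of the embeddings of the relevant pieces, to show that attaching the two rim gadgets along $P$ and $Q$ is possible in the plane precisely under the stated hypothesis and impossible otherwise. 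The necessity constructions, once this equivalence is in place, reduce to the standard wall- and cylinder-type examples underlying Theorem~\ref{RSmetathm}.
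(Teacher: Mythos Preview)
Your proposal is a programme, not a proof: the crucial objects $\widehat H$ and $\widehat G$ are never specified, and all three key claims --- the two-way dictionary, the planarity equivalence, and the invertibility of $G\mapsto\widehat G$ --- are asserted rather than argued. This would be acceptable if the plan were clearly viable, but there are concrete reasons to doubt that it is.

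The central difficulty is the one you yourself flag, and you underestimate it. For a vertex $v$ of $H$ labelled $\alpha$ with $d_H(v)=d$, the condition on its branch set is that in the tree $T^\pi_v$ \emph{every} leaf-to-leaf path meets an $\alpha$-vertex of $\pi(v)$. This is a separation property of an unbounded tree with $d$ marked leaves, and it is unclear how any fixed finite gadget at $v$ in $\widehat H$ could model it via the unlabelled minor relation. The obvious local modifications --- clone each $\alpha$-vertex of $G$, or attach a pendant, and attach a matching gadget at $v$ in $\widehat H$ --- encode only that the branch set \emph{contains} a labelled vertex. That is precisely the rooted-minor relation of Wollan and of Marx--Seymour--Wollan, which the paper explicitly discards (see Figure~\ref{fig:naive-minors}) because it fails to capture $S$-cycles. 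You offer no candidate gadget, and I do not see one.

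The invertibility step in your ``only if'' direction is a second real gap. Applying Theorem~\ref{RSmetathm} to a non-planar $\widehat H$ produces counterexample graphs $\widehat G_k$, but you have no control over their structure; there is no reason they should lie in the image of your map $G\mapsto\widehat G$, and gadget reductions of this type almost never have surjective image.

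The paper takes a completely different route and does \emph{not} reduce to Theorem~\ref{RSmetathm}. Necessity (Lemmas~\ref{outerlem}, \ref{seplabelslem}, \ref{2homolem}) is proved by explicit planar constructions: a grid with $\ell$ holes whose boundary cycles carry all labels, and a scaled labelled grid in which two disjoint labelled $H$-expansions would force a $K_{3,3}$-minor if the label homogeneity exceeded~$2$. Sufficiency proceeds via tangles: a minimal counterexample carries a large tangle (Lemma~\ref{largetanglelem}), hence a large wall (Theorem~\ref{lemma: tangle wall}), and then the flat-wall theorem for doubly group-labelled graphs due to Huynh, Joos and Wollan (Theorem~\ref{thm:simpleflatwall}) is invoked; each of its outcomes yields $k$ disjoint labelled $H$-expansions by Lemmas~\ref{caseilem} and~\ref{caseiiilem}, or a contradiction to minimality. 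The heavy lifting is Theorem~\ref{thm:simpleflatwall}, which is the labelled analogue of the structure theory you hoped to import for free from Theorem~\ref{RSmetathm}; there is no known shortcut around it.
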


There are several ways to define labelled expansions.
We choose a definition such that the resulting labelled minor relation is transitive and we also generalise the results about labelled cycles.
As the precise definition is a bit technical, we defer it to Section~\ref{sec:labels}.
We note that, with a slightly stronger notion of labelled subdivisions,
Liu~\cite{Liu17} proved a half-integral Erd\H os-P\'osa type result for labelled subdivisions.

Theorem~\ref{thm:2consimple} has a number of applications.
It implies the result of Kakimura, Kawarabayashi and Marx that $S$-cycles have the \EPP\ (albeit with a faster growing $f$)
as well as the result due to Huynh, Joos and Wollan that the same is true for cycles with two labels. 
Moreover, more complicated variants of cycles with labelled vertices are covered. For instance, 
the theorem shows that, given a set $S$, the family of cycles that each contain, say, at least~$42$ vertices
from $S$ has the \EPP. Instead of $S$-cycles, we could also consider $S$-$K_4$-subdivisions, 
that is, subdivisions of $K_4$ that each contain at least one vertex from $S$.
As a consequence of our theorem, the set 
of these has the \EPP, too.
Similar statements involving two labels are also covered.

Our main theorem requires the graph $H$ to be $2$-connected. 
This is necessary: if $H$ is not $2$-connected then 
the conclusion of the theorem becomes false;
in particular, there are simply-labelled graphs $H$ such that all labelled vertices belong to the boundary of a single face
but $H$-expansions do not have the \EPP.
We investigate the Erd\H os-P\'osa property for unconnected and merely $1$-connected graphs $H$ in a follow-up 
paper in which we heavily rely on the results of this paper.

At the end of this article, in Sections~\ref{zerosec} and~\ref{infgroupsec}, we discuss 
how Theorem~\ref{thm:2consimple} can further be generalised to include parity and modularity constraints
such that it covers, for instance, even $S$-cycles. 

\section{Labelled graphs and minors}\label{sec:labels}

In this section we introduce several definitions concerning labelled graphs, minors, expansions, walls, and tangles.
All definitions not involving labels are standard and commonly used in the literature.
Most of our notation is standard and in accordance with Diestel~\cite{diestelBook17}.

We start with expansions and minors without labels.
For a graph $H$, a pair $(X,\pi)$ of a graph $X$ and a mapping $\pi:V(H)\cup E(H) \to V(X)\cup E(X)$ is an \emph{$H$-expansion} if 
\begin{enumerate}[label=(\roman*)]
\item $(\pi(u)\cap V(X),\pi(u)\cap E(X))$ 
is an induced tree in $X$ for each $u\in V(H)$ and every vertex in $X$ belongs to exactly one such tree; and
\item for every two distinct $u,v\in V(H)$
if $u$ and $v$ are adjacent in $H$, there is exactly one $\pi(u)$--$\pi(v)$ edge in $X$,
the edge $\pi(uv)$, and if $u$ and $v$ are not adjacent there is no such edge. 
\end{enumerate}
Often we omit $\pi$ and simply say that $X$ is an $H$-expansion. 
If a graph $G$ contains an $H$-expansion $X$ as a subgraph, we say that $H$ is a \emph{minor} of $G$.
Note that for every vertex $u$ of $H$ the induced subgraph $X[\pi(u)]$ together
with all edges $\pi(uv)$ for $v\in N_H(u)$ 
forms a tree, which we denote by $T^\pi_u$.
We refer to $\pi(u)$ as the \emph{branch set} of $u$.

\subsection{Labelled graphs}\label{sec:lgraphs}

Let us now formally introduce labelled graphs and labelled expansions.
We call a graph $G$ a \emph{labelled graph} if some of its vertices are marked with one or more labels 
from some alphabet $\Sigma$. 
Formally, $G$ is endowed with a function $\ell: V(G)\to \cP(\Sigma)$, 
and we say that a vertex $v$ \emph{is labelled with} $\alpha\in\Sigma$ if $\alpha\in\ell(v)$.
Note that a vertex may have several labels or none at all. 
We also write that a graph $G$ is \emph{$\Sigma$-labelled}.

\begin{figure}[ht]
\centering
\begin{tikzpicture}[scale=0.7]

\tikzstyle{enddot}=[circle,inner sep=0cm, minimum size=10pt,color=hellgrau,fill=hellgrau]
\tikzstyle{markline}=[draw=hellgrau,line width=10pt]

\def\step{0.7}

\begin{scope}[shift={(-6,1)}]
\def\radius{0.5}
\draw[hedge] (0,0) circle (\radius);
\node[svertex] at (120:\radius){};

\node at (1,0){$\preceq_r$};

\begin{scope}[shift={(2,0)}]
\draw[hedge] (0,0) circle (\radius);
\node[hvertex] (a) at (60:\radius){};
\node[svertex] (b) at (50:1.5){};
\draw[hedge] (a) -- (b);
\end{scope}
\end{scope}

\node[svertex] (x) at (0,0){};
\node[hvertex] (a) at (120:\step){};
\node[hvertex] (b) at (60:\step){};
\node[hvertex] (c) at (240:\step){};
\node[hvertex] (d) at (300:\step){};

\draw[hedge] (x) -- (a) -- (b) -- (x);
\draw[hedge] (x) -- (c) -- (d) -- (x);

\begin{scope}[shift={(2,2+0.5*\step)}]
\node[svertex] (xx) at (0,0){};
\node[hvertex] (z) at (\step,0){};
\node[hvertex] (a) at (120:\step){};
\node[hvertex] (b) at (60:\step){};
\node[hvertex] (y) at (0,-\step){};

\begin{scope}[shift={(0,-\step)}]
\node[hvertex] (c) at (240:\step){};
\node[hvertex] (d) at (300:\step){};
\end{scope}

\draw[hedge] (xx) -- (a) -- (b) -- (xx);
\draw[hedge] (y) -- (c) -- (d) -- (y);
\draw[hedge] (xx) edge (y) edge (z); 

\begin{scope}[on background layer]
  \draw[markline,fill=hellgrau] (xx.center) -- (y.center);
  \node[enddot] at (xx){};
  \node[enddot] at (y){};
\end{scope}

\end{scope}

\begin{scope}[shift={(4,0.5*\step)}]
\node[hvertex] (xx) at (0,0){};
\node[svertex] (zz) at (\step,0){};
\node[hvertex] (z) at (2*\step,0){};
\node[hvertex] (a) at (120:\step){};
\node[hvertex] (b) at (60:\step){};
\node[hvertex] (y) at (0,-\step){};

\begin{scope}[shift={(0,-\step)}]
\node[hvertex] (c) at (240:\step){};
\node[hvertex] (d) at (300:\step){};
\end{scope}

\draw[hedge] (xx) -- (a) -- (b) -- (xx);
\draw[hedge] (y) -- (c) -- (d) -- (y);
\draw[hedge] (xx) edge (y) edge (zz); 
\draw[hedge] (zz) -- (z);
\begin{scope}[on background layer]
  \draw[markline,fill=hellgrau] (xx.center) -- (zz.center);
  \node[enddot] at (xx){};
  \node[enddot] at (zz){};
\end{scope}
\end{scope}
 
\node at (2,-0.2) {${\not\preceq}_M$};
\node[rotate=40] at (1,0.7) {$\preceq_M$};
\node[rotate=-40] at (3,0.7) {$\preceq_M$};

\end{tikzpicture}
\caption{Labelled graphs with two different minor relations. Labelled vertices in grey. 
Left: an $S$-cycle as a rooted minor. Right: transitivity fails for naive labelled minor relation}\label{fig:naive-minors}
\end{figure}
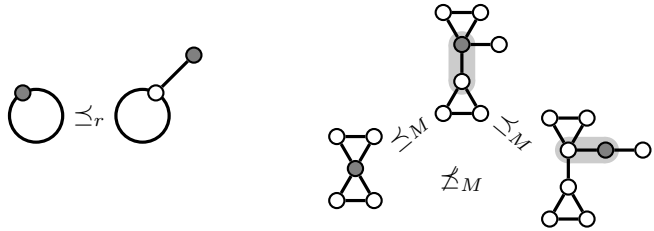

What should it mean that some (labelled) graph has some other graph $H$ as a labelled minor, 
or equivalently, contains a labelled $H$-expansion? 
A natural labelled minor relation has been explored 
before: Wollan~\cite{DBLP:journals/jgt/Wollan08} and Marx, Seymour and Wollan~\cite{MSW17}
treat \emph{rooted minors}, minors with a single label. 
In this setting,  a vertex in a minor is labelled as soon as its branch set contains a labelled vertex (a \emph{root}).
While this definition bears its own merit, it does not capture all structures we want to express.
In particular, it does not capture $S$-cycles: if a graph contains an $S$-cycle 
 as a rooted minor, then it does not necessarily
contain an $S$-cycle as a subgraph; see Figure~\ref{fig:naive-minors}.
Our notion of a labelled minor will be designed to capture $S$-cycles, as well as 
\emph{long} $S$-cycles, $S$-cycles of length at least a fixed length~$\ell$.
These are known to have the \EPP~\cite{BJS18}.

The problem with rooted minors, at least in view of $S$-cycles, is that a branch set may
send out an appendix to pick up a labelled vertex, where this appendix is 
unnecessary for the (unlabelled) minor relation. 
At first sight, the following variant of the definition fixes this issue: 
say a vertex $v$ in a minor is labelled as soon as its branch set contains a labelled vertex and that labelled vertex lies on a path between two edges in the expansion  that connect that branch set to the branch sets of other vertices.
This definition, however, leads to a labelled minor relation that is not transitive, which is 
clearly problematic (see Fig.~\ref{fig:naive-minors})
also because labelled $H$-expansions do not necessarily contain a labelled $H'$-expansion for all subgraphs $H'$ of $H$ (nevertheless such a notion has been considered in~\cite{KM:19} in the context of the \EPP{} for disconnected graphs).
Our notion of a labelled minor is slightly different but transitive 
and hence also closed under taking subgraphs.

\begin{figure}[ht]
\centering
\begin{tikzpicture}
\tikzstyle{hvertex}=[thick,circle,inner sep=0.cm, minimum size=2mm, fill=white, draw=black]
\tikzstyle{svertex}=[hvertex,fill=dunkelgrau]
\tikzstyle{branchset}=[thin,color=hellgrau,fill=hellgrau,circle,minimum size=1.5cm]
\tikzstyle{pathedge}=[hedge,decorate, decoration={random steps,segment length=3pt,amplitude=1pt}]

\tikzstyle{enddot}=[circle,inner sep=0cm, draw, very thick, minimum size=9pt,color=dunkelgrau,fill=hellgrau]
\tikzstyle{markline}=[draw=dunkelgrau,line width=10pt]
\tikzstyle{inner}=[draw=hellgrau,fill=dunkelgrau,line width=8pt]

\def\angle{50}
\def\dist{0.6}

\begin{scope}[rotate=\angle]

\node[svertex] (a) at (0,0) {};
\node[svertex,label=right:$v$] (b) at (0,0.7) {};
\node[hvertex] (c) at (-0.5,1.4) {};
\node[svertex,label=right:$u$] (d) at (0.5,1.4) {};
\node[hvertex] (e) at (0,2.1) {};

\draw[hedge] (a) -- (b);
\draw[hedge] (b) -- (c);
\draw[hedge] (b) -- (d);
\draw[hedge] (c) -- (d);
\draw[hedge] (e) -- (c);
\draw[hedge] (e) -- (d);
\draw[hedge] (e) -- (b);
\end{scope}

\begin{scope}[shift={(6,-0.5)},rotate=\angle]

\begin{scope}[scale=2.5,on background layer]
\node[branchset] (A) at (0,0) {};
\node[branchset,label=right:$T^\pi_v$, thick, color=dunkelgrau] (B) at (0,0.7) {};
\node[branchset] (C) at (-0.4,1.25) {};
\node[branchset,label=right:$\pi(u)$] (D) at (0.4,1.25) {};
\node[branchset] (E) at (0,1.8) {};
\end{scope}

\begin{scope}[shift={(A.center)}]
\node[svertex] (a0) at (0,-0.5) {};
\node[hvertex] (a1) at (0,\dist) {};
\end{scope}

\begin{scope}[shift={(B.center)}]
\node[hvertex] (b1) at (0,-\dist) {};
\node[svertex] (b2) at (80:\dist) {};
\node[hvertex] (b3) at (150:\dist) {};
\node[svertex] (cb) at (0,0){};
\end{scope}

\begin{scope}[shift={(C.center)}]
\node[hvertex] (c1) at (-70:\dist) {};
\node[hvertex] (c2) at (0:\dist) {};
\node[hvertex] (c3) at (70:\dist) {};
\coordinate (cc) at (180:0.5*\dist);
\end{scope}

\begin{scope}[shift={(D.center)}]
\node[hvertex] (d1) at (180-70:\dist) {};
\node[hvertex] (d2) at (180:\dist) {};
\node[hvertex] (d3) at (180+70:\dist) {};
\coordinate (cd) at (0:0.5*\dist);
\path (cd) to coordinate[midway] (dx) (d1.center);
\path (cd) to  coordinate[midway] (dz) (d3.center);
\node[svertex]  at (dx) {};
\node[svertex]  at (dz) {};
\end{scope}

\begin{scope}[shift={(E.center)}]
\node[hvertex] (e1) at (-30:\dist) {};
\node[hvertex] (e2) at (180+30:\dist) {};
\node[hvertex] (e3) at (-90:\dist) {};
\coordinate (ce) at (90:0.5*\dist);
\end{scope}

\draw[hedge] (a1) -- (b1);
\draw[hedge] (b3) -- (c1);
\draw[hedge] (b2) -- (d3);
\draw[hedge] (b2) -- (e3);
\draw[hedge] (c2) -- (d2);
\draw[hedge] (d1) -- (e1);
\draw[hedge] (c3) -- (e2);

\node at (0.95,1.95) {$\pi(uv)$};

\begin{scope}[on background layer]
\node[enddot] at (d3.center){};
\draw[markline] (d3.center) -- (b2.center);
\draw[inner] (d3.center) -- (b2.center);

\node[enddot] at (e3.center){};
\draw[markline] (b2.center) -- (e3.center);
\draw[inner] (b2.center) -- (e3.center);

\node[enddot] at (a1.center){};
\draw[markline] (b1.center) -- (a1.center);
\draw[inner] (b1.center) -- (a1.center);

\node[enddot] at (c1.center){};
\draw[markline] (b3.center) -- (c1.center);
\draw[inner] (b3.center) -- (c1.center);

\draw[fill=hellgrau,color=hellgrau] (b2.center) circle (0.22cm);

\node[branchset,minimum size=1.4cm] at (B){};

\end{scope}

\begin{scope}[on background layer]
\draw[pathedge] (a0.center) to (a1.center);
\draw[pathedge] (cb) to (b1.center);
\draw[pathedge] (cb) to (b2.center);
\draw[pathedge] (cb) to (b3.center);
\draw[pathedge] (cc) to (c1.center);
\draw[pathedge] (cc) to (c2.center);
\draw[pathedge] (cc) to (c3.center);
\draw[pathedge] (cd) to (dx) to (d1.center);
\draw[pathedge] (cd) to (d2.center);
\draw[pathedge] (cd) to (dz) to (d3.center);
\draw[pathedge] (ce) to (e1.center);
\draw[pathedge] (ce) to (e2.center);
\draw[pathedge] (ce) to (e3.center);
\end{scope}

\end{scope}

\node at (1,1) {{\large $\mir$}};

\end{tikzpicture}
\caption{A labelled expansion (labelled vertices in grey)}\label{lblexpfig}
\end{figure}
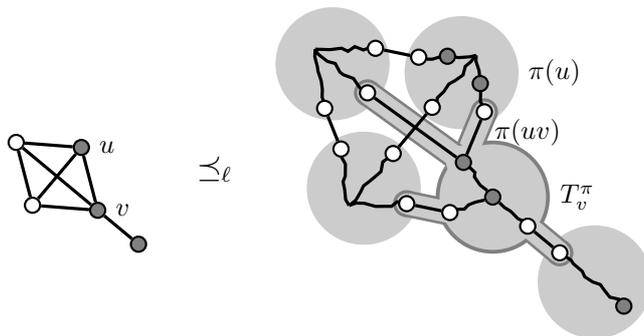

Fix some alphabet $\Sigma$ and let $H$ be a $\Sigma$-labelled graph. A pair $(X,\pi)$ of a 
labelled graph $X$ and a mapping $\pi:V(H)\cup E(H) \to V(X)\cup E(X)$
is a \emph{labelled $H$-expansion} if
\begin{enumerate}[label=(\roman*)]
\item $(X,\pi)$  is an $H$-expansion; and 
\item if $v\in V(H)$ is labelled with~$\alpha$ and if $T^\pi_v$ is not an isolated vertex, then 
every non-trivial leaf-to-leaf path in $T^\pi_v$ contains a vertex contained in $\pi(u)$ that is labelled with~$\alpha$.
\end{enumerate}
Observe that $T^\pi_v$ may only be an isolated vertex if $v$ is an isolated vertex.
Intuitively, the definition says that if $u$ and $v$ are neighbors of some vertex $w$ in $H$, the direct path from $\pi(u)$ to $\pi(v)$ through $\pi(w)$ contains vertices of every label in $\ell(v)$.

Again, if the mapping $\pi$ is clear from the context, we may simply call $X$
itself a labelled $H$-expansion. 
If a labelled graph $G$ contains a labelled $H$-expansion as a subgraph, 
$H$ is a \emph{labelled} minor of $G$.
We write $H\mir G$ for short.

Let us first show that this definition yields a transitive minor relation.
To this end, 
we say that a labelled $H$-expansion $(X,\pi)$ is \emph{minimal}
if for all $u\in V(H)$ the following holds:
\begin{itemize}
	\item If $d_H(u)\geq 2$, then every leaf of $\pi(v)$ is contained in some $\pi(uv)$ for some $v\in N_H(u)$; and
	\item if $d_H(u)\leq 1$, then $\pi(u)$ is a path and if $d_H(u)= 1$, then an endvertex of this path is contained in $\pi(uv)$ where $v$ is the unique neighbour of $u$.
\end{itemize}
It is easy to see that every labelled $H$-expansion $(X,\pi)$ contains as a subgraph a minimal $H$-expansion $(X',\pi')$
such that $\pi'(uv)=\pi(uv)$ for all $uv\in E(H)$ and $\pi'(u)$ is a subtree of $\pi(u)$ for all $u\in V(H)$.

\begin{lemma}\label{lem:transitivity}
Let $A,B,C$ be labelled graphs such that $A \mir B$ and $B\mir C$. 
Then also $A\mir C$.
\end{lemma}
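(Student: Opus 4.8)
The plan is to compose the two expansions and read off a labelled $A$-expansion inside $C$. By the remark preceding the lemma we may fix a \emph{minimal} labelled $A$-expansion $(X,\pi)$ with $X\sub B$ and a \emph{minimal} labelled $B$-expansion $(Y,\sigma)$ with $Y\sub C$. Since $X\sub B$, every vertex of $X$ is a vertex of $B$ and every edge of $X$ is an edge of $B$; in particular $\sigma$ assigns to each $x\in V(X)$ its branch set $\sigma(x)$ (a subtree of $Y$) and to each edge $e$ of $X$ the single edge $\sigma(e)$ of $Y$. I will build a labelled $A$-expansion $(Z,\rho)$ as a subgraph of $Y$, which then proves $A\mir C$.

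To define $\rho$, fix $a\in V(A)$. For each $v\in\pi(a)$ let $R_v$ be the set of those endpoints in $\sigma(v)$ of the edges $\sigma(e)$, taken over all edges $e$ of the tree $T^\pi_a$ incident with $v$, and let $\sigma'(v)$ be the minimal subtree of $\sigma(v)$ containing $R_v$. Put $\rho(a):=\bigcup_{v\in\pi(a)}\sigma'(v)$ together with the edges $\sigma(vv')$ for $vv'\in E(X[\pi(a)])$, and $\rho(aa'):=\sigma(\pi(aa'))$ for $aa'\in E(A)$; let $Z$ be the union of all branch sets $\rho(a)$ and all edges $\rho(aa')$. The first, routine, step is to check that $(Z,\rho)$ is an $A$-expansion: each $\rho(a)$ is a tree, being obtained from the tree $X[\pi(a)]$ by replacing every vertex $v$ with the connected subtree $\sigma'(v)$ and every edge $vv'$ by the bridge $\sigma(vv')$; the branch sets $\rho(a)$ are pairwise disjoint because the $\sigma(v)$ are; every vertex of $Z$ lies in exactly one branch set by construction; and the only edges of $Z$ joining two distinct branch sets are the $\rho(aa')$, since any other edge of $Y$ between $\sigma(v)$ and $\sigma(v')$ with $v\in\pi(a)$, $v'\in\pi(a')$ would be $\sigma(vv')$ for some edge $vv'\in E(B)$, and such an edge is placed into $Z$ only if $vv'=\pi(aa')$.

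It remains to verify condition (ii) for $(Z,\rho)$, which is the heart of the argument. Consider the map $q$ that sends every vertex of $Z$ lying in some $\sigma'(v)$, $v\in\pi(a)$, to $v$, and every endpoint of a pendant edge $\rho(aa')$ not in $\rho(a)$ to the corresponding endpoint of $\pi(aa')$ in $\pi(a')$; thus $q$ contracts $T^\rho_a$ onto $T^\pi_a$ along the connected sets $\sigma'(v)$. Using minimality of $(X,\pi)$ one checks that, for $d_A(a)\ge 2$, every $v\in\pi(a)$ has degree at least $2$ in $T^\pi_a$; hence every vertex of each $\sigma'(v)$, $v\in\pi(a)$, has degree at least $2$ in $T^\rho_a$ (counting its $\sigma'(v)$-edges and the bridges attached to it), so the leaves of $T^\rho_a$ are exactly the endpoints of the pendant edges $\rho(aa')$ outside $\rho(a)$. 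Consequently any non-trivial leaf-to-leaf path $P$ of $T^\rho_a$ is mapped by $q$ onto a non-trivial leaf-to-leaf path $q(P)$ of $T^\pi_a$. Now let $a$ carry a label $\alpha$. Condition (ii) for $(X,\pi)$ yields a vertex $v_0\in\pi(a)$ on $q(P)$ labelled $\alpha$ in $B$; being internal to $q(P)$, the vertex $v_0$ meets two distinct edges $e_1,e_2$ of $T^\pi_a$, so $P$ enters $\sigma'(v_0)$ through $\sigma(e_1)$ and leaves it through $\sigma(e_2)$. Then the concatenation of $\sigma(e_1)$, the $\sigma(v_0)$-portion of $P$ (which lies in $\sigma'(v_0)$), and $\sigma(e_2)$ is a non-trivial leaf-to-leaf path of $T^\sigma_{v_0}$; since $v_0$ is labelled $\alpha$ in $B$, condition (ii) for $(Y,\sigma)$ puts on it a vertex of $\sigma(v_0)$ labelled $\alpha$ in $C$, and that vertex lies on $P$. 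Applying this to each label of $a$ and each leaf-to-leaf path of $T^\rho_a$ establishes condition (ii); the degenerate cases $d_A(a)\in\{0,1\}$, where $T^\pi_a$ and $T^\rho_a$ are paths, are handled by the same projection argument using that $\pi(a)$ is then a path of the appropriate length (again by minimality).

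The step I expect to cause the most trouble is precisely this control of the leaves of $T^\rho_a$: one cannot simply take $\rho(a)=\bigcup_{v\in\pi(a)}\sigma(v)$, since that might create extra leaves inside some $\sigma(v)$ and hence a leaf-to-leaf path of $T^\rho_a$ confined to a single $\sigma(v)$ that carries no label; one must instead prune each $\sigma(v)$ to the minimal subtree $\sigma'(v)$ meeting all relevant bridges, and this is exactly where the minimality hypotheses (through the degree bound in $T^\pi_a$) enter. Everything else — the tree and disjointness checks and the low-degree cases — is bookkeeping.
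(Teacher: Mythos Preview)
Your argument is correct and follows essentially the same route as the paper: compose the two expansions, use minimality to control the leaves of the composite tree $T^\rho_a$, project a leaf-to-leaf path to $T^\pi_a$ to locate an $\alpha$-labelled vertex $v_0$, and then apply condition~(ii) for the second expansion at $v_0$ to find the required $\alpha$-labelled vertex of $C$ on the path.

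The only technical difference is how spurious leaves are avoided. The paper first replaces $B$ by the minimal $A$-expansion it contains and then $C$ by the minimal $B$-expansion it contains; after this replacement every $b\in\beta(a)$ has degree at least~$2$ in $B$, so minimality of the second expansion alone guarantees that the full branch sets $\gamma(b)$ contribute no extra leaves to $T^\pi_a$. You instead keep $(Y,\sigma)$ as a minimal $B$-expansion (not an $X$-expansion), which may leave dangling leaves in $\sigma(v)$ coming from edges of $B\setminus X$; your explicit pruning to $\sigma'(v)$ removes them. Both devices serve the same purpose and lead to the same projection argument, so this is a cosmetic rather than a genuine difference.
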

\begin{proof}
Observe first that whenever a graph $G$ contains a labelled $H$-expansion of a graph $H$,
then $G$ also contains a labelled $H'$-expansion for any subgraph $H'$ of $H$.

Hence we may assume that $(B,\beta)$ is a minimal labelled $A$-expansion, 
and that $(C,\gamma)$ is a minimal labelled $B$-expansion. 
Define $$\pi(a)=\bigcup_{b\in V(\beta(a))}\gamma(b) \cup \bigcup_{bb'\in E(\beta(a))}\gamma(bb')$$ for every $a\in V(A)$,
and  set $\pi(aa')=\gamma(\beta(aa'))$ for all $aa'\in E(A)$.
Forgetting the labels, 
it is a standard task to check that  $(C,\pi)$ is an $A$-expansion. 
Thus, it remains to verify condition~(ii) in the definition of labelled expansions.

For this, let $a\in V(A)$ be labelled with~$\alpha$,
and let $P=u\ldots v$ be a leaf-to-leaf path in~$T^\pi_a$.
Since $B$ and $C$ are minimal,
$u$ (resp.~$v$) either does not belong to $\pi(a)$ or $d_A(a)\leq 1$.
Observe that $T^\pi_a=\bigcup_{b\in\beta(a)}T^\gamma_b$.
Then $P$ defines a leaf-to-leaf path $P'$ in $T^\beta_a$ (if $d_A(a)\leq 1$, then $P'=T_a^{\beta}$).
The path $P'$ contains 
a vertex $b^*\in \beta(a)$ that is labelled with~$\alpha$
 as $(B,\beta)$ is a labelled $A$-expansion.
The path $Q=P\cap T^\gamma_{b^*}$ is, in $T^\gamma_{b^*}$, a leaf-to-leaf path as well.
Since $b^*$ is labelled with~$\alpha$ it follows that  $Q$ contains a vertex $c^*$ in $\gamma(b^*)$
that is labelled with~$\alpha$ as well. 
Since $c^*\in V(\gamma(b^*))\subseteq V(\pi(a))$
we have found a vertex in $\pi(a)$ on $P$ that is labelled with~$\alpha$, as desired.
\end{proof}

The definition of a labelled graph or expansion
allows for vertices to receive two or more labels, and this is actually 
helpful in the proofs.
However, 
our main result, Theorem~\ref{thm:2consimple}, requires the vertices in the graph $H$ to have at most one label. 
This is mostly because we favour main theorems with simple statements.
Allowing doubly-labelled vertices in $H$ complicates matters somewhat. While we can (and will) handle 
these complications, the resulting statement becomes more complex, and less attractive (see Theorem~\ref{thm:2con}).

\section{Tangles}

The concept of a tangle plays a key role in this paper.
We start with the definition and explain how a minimal counterexample for the \EPP{} of a certain family of graphs
naturally yields a tangle.
We then introduce walls and recall how tangles are linked to walls.
In Section~\ref{sec:linkages}, we introduce linkages and 
in Section~\ref{sec:flatwall}, we state the key tool for our proof.

\subsection{Definition}

An ordered pair $(A,B)$ of edge-disjoint subgraphs of $G$ that partition $E(G)$ 
is a \emph{separation}.
The \emph{order} of the separation is~$|V(A)\cap V(B)|$.

A \emph{tangle} of order~$r$ in a graph $G$ is a set $\mathcal T$
of tuples $(A,B)$ so that the following assertions hold.
\begin{enumerate}[label=(T\arabic*)]
\item\label{T1} 
Every tuple $(A,B)\in \cT$ is a separation of order less than $r$.
\item\label{T2}
For all separations $(A,B)$ of $G$ of order less than $r$,
exactly one of $(A,B)$ and $(B,A)$ lies in $\cT$.
\item\label{T3} $V(A)\neq V(G)$ for all $(A,B)\in \cT$.
\item\label{T4} $A_1\cup A_2\cup A_3\neq G$ for all $(A_1,B_1),(A_2,B_2),(A_3,B_3)\in\mathcal T$.
\end{enumerate}

For any tangle $\cT$ and $(A,B)\in \cT$,
we refer to $A$ as the \emph{$\cT$-small} side of the separation $(A,B)$.
Suppose $\cT$ has order $r\geq 3$ and let $X$ be a vertex set of size at most $r-2$.
Then $G-X$ contains a unique block $U$
such that $V(U)\cup X$ is not contained in any $\cT$-small side of a separation in $\cT$.
We call the block~$U$  the \emph{$\cT$-large block} of $G-X$.

\subsection{Tangles and the \EPP}

The concept of tangles goes very well together with the \EPP.
To see this we first introduce the notion of a minimal counterexample.
Suppose $\cH$ is a family of graphs, $G$ is a graph, and $k\in \N$.
We say that $G$ is \emph{$\mathcal H$-free} if no subgraph of $G$ lies in $\mathcal H$.
We say the pair $(G,k)$ is a \emph{counterexample} to the function $f:\N\to \R_+$ being 
an \EP{} function for the family $\cH$ if $G$ does contain neither $k$ disjoint copies of graphs in $\cH$
nor a set $X\subseteq V(G)$ of size at most $f(k)$ such that $G-X$ is $\cH$-free.
We extend this definition to the labelled case in a straightforward way: 
$\cH$ is a labelled family of graphs, $G$ is a labelled graph, and $G$ does neither contain $k$ disjoint copies of labelled graphs in $\cH$ nor a set 
$X\subseteq V(G)$ of size at most $f(k)$ such that $G-X$ is $\cH$-free.

The following lemma shows that every minimal counterexample has a somewhat canonical tangle
which indicates where the copies of the graphs in $\cH$ lie.
Essentially the same lemma was proven by Wollan in~\cite{Wol11} and restated and adapted in~\cite{HJW19}.
We include the short proof for completeness.

\begin{lemma}\label{largetanglelem}
Suppose $\cH$ is a family of connected
labelled graphs.
Suppose $(G,k)$ is a counterexample to the function $f:\N\to \R_+$ being 
an \EP{} function for $\cH$ with $k$ chosen minimal over all such counterexamples.
Suppose that $t\leq \min\{ f(k)-2f(k-1) , f(k)/3\}$.
Let $\cT$ be the collection of all separations $(A,B)$ of order less than $t$
such that $B$ contains a subgraph that lies in $\cH$.
Then $\cT$ is a tangle.
\end{lemma}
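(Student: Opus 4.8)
The plan is to verify the four tangle axioms \ref{T1}--\ref{T4} directly from the definition of $\cT$, using the minimality of $k$ to establish \ref{T2} and the bound $t\le f(k)/3$ to establish \ref{T4}. Axiom \ref{T1} is immediate, since by construction every element of $\cT$ is a separation of order less than $t$. For \ref{T3}: if $(A,B)\in\cT$ had $V(A)=V(G)$, then every subgraph of $G$ would meet $B$ only in the separator $V(A)\cap V(B)$, which has fewer than $t$ vertices; but $B$ contains a copy of some $H\in\cH$, and since the members of $\cH$ are connected and $t$ is (as one checks from the hypotheses) small compared to the smallest order of a graph in $\cH$ — more directly, a connected subgraph contained in $B$ with all its vertices in $A$ would have to lie entirely in the separator — one derives a contradiction. (I would phrase \ref{T3} simply: a copy of $H\in\cH$ inside $B$ contains an edge of $B$ not in $A$, hence a vertex of $B$ outside $V(A)$ unless $H$ has no edges, and the $k\ge 1$ case rules that out; the truly degenerate cases are handled by noting $(G,k)$ is a genuine counterexample so $\cH\ne\varnothing$ and $k\ge 1$.)

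The heart of the argument is \ref{T2}. Given a separation $(A,B)$ of order less than $t$, I want exactly one of $(A,B),(B,A)$ in $\cT$; equivalently, exactly one of $A,B$ contains a member of $\cH$ as a subgraph. That not both sides can contain a member of $\cH$: if $A$ contained a copy of some $H_1\in\cH$ and $B$ a copy of some $H_2\in\cH$, these two copies are disjoint except possibly on $V(A)\cap V(B)$. Here I use minimality of $k$: the graph $G' := G - (V(A)\cap V(B))$ splits into the part inside $A$ and the part inside $B$ with no edges between, and $(G', k-1)$ cannot be a counterexample (by minimality), so $G'$ has either $k-1$ disjoint copies of graphs in $\cH$ or a hitting set $X'$ of size at most $f(k-1)$. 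In the first case, together with the copy of $H$ we found in $B$ (which avoids $V(A)\cap V(B)$, after shrinking the separator away) we would get $k$ disjoint copies in $G$ — contradiction. Actually the cleaner route: one shows instead that $A$ cannot contain a member of $\cH$, because otherwise we get $k$ disjoint copies directly (the copy in $A$, disjoint from a copy in $B$), OR we build a small hitting set of size $|V(A)\cap V(B)| + f(k-1) \le t + f(k-1) \le f(k)$ using the hypothesis $t\le f(k)-f(k-1)$ — wait, the stated bound is $t \le f(k) - 2f(k-1)$; the factor $2$ is there because one handles both sides. So: if both $A$ and $B$ contain members of $\cH$, delete the separator (cost $<t$) and apply minimality to each side separately, getting hitting sets of total size $< t + 2f(k-1) \le f(k)$, a contradiction to $(G,k)$ being a counterexample. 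And at least one of $A,B$ contains a member of $\cH$ because $G$ itself does (it is a counterexample, so $G-\varnothing$ is not $\cH$-free — indeed $f(0)$ can be taken $\ge 0$ so $G$ contains a copy), and any copy of a connected $H\in\cH$ in $G$, being connected, must lie entirely in $A$ or entirely in $B$ once we note it cannot use an edge from each side unless it passes through the separator; more carefully, a connected subgraph of $G$ is contained in $A\cup B = G$ and if it has edges in both $E(A)\setminus E(B)$ and $E(B)\setminus E(A)$ it still stays connected through $V(A)\cap V(B)$, so this needs the standard fact that a connected subgraph not contained in $A$ must have a vertex in $V(B)\setminus V(A)$, and then one of its copies — hmm, this requires a short argument that I would spell out: take a copy of $H\in\cH$; it is connected; if it is not contained in $B$ it has a vertex outside $V(B)$, i.e.\ in $V(A)\setminus V(B)$, and if not contained in $A$ a vertex in $V(B)\setminus V(A)$; a connected graph cannot meet both $V(A)\setminus V(B)$ and $V(B)\setminus V(A)$ without using an edge — contradiction since every edge lies in $A$ or in $B$, whose endpoints then cannot straddle the separator. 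So the copy lies wholly in $A$ or wholly in $B$, giving \ref{T2}.

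Finally \ref{T4}: suppose $(A_1,B_1),(A_2,B_2),(A_3,B_3)\in\cT$ with $A_1\cup A_2\cup A_3 = G$. Each $B_i$ contains a copy $J_i$ of a graph in $\cH$. The idea is that $J_i$, being connected and not ``fitting into the small sides'', must essentially live on the $\cT$-large block; more concretely I would argue that $J_i$ cannot be contained in $A_j$ for any $j$ (else $(A_j, B_j)$ would have its small side $A_j$ containing a member of $\cH$, contradicting \ref{T2}/\ref{T3} just proved — precisely, $A_j$ containing a copy of some $H\in\cH$ would force $(B_j,A_j)\in\cT$ too, violating \ref{T2}). Hence each connected $J_i$, not contained in any $A_j$, has a vertex outside $V(A_j)$ for every $j$. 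Then using $A_1\cup A_2\cup A_3 = G$: pick a vertex $v$ of $J_1$; it lies in some $A_j$, and since $J_1$ is connected and leaves $A_j$, there is an edge of $J_1$ from $A_j$ to outside, which must lie in some $A_{j'}$... this routes into the standard counting. The clean version uses the bound $t \le f(k)/3$: if $A_1\cup A_2\cup A_3=G$, then $X := (V(A_1)\cap V(B_1))\cup(V(A_2)\cap V(B_2))\cup(V(A_3)\cap V(B_3))$ has size $< 3t \le f(k)$, and I claim $G-X$ is $\cH$-free. Indeed a copy $J$ of $H\in\cH$ in $G-X$ is connected and avoids all three separators; since $A_1\cup A_2\cup A_3 = G$, $J\subseteq A_1\cup A_2\cup A_3$, and a connected graph avoiding $V(A_i)\cap V(B_i)$ for each $i$ and contained in the union must be contained in a single $A_i$ (again because it cannot use an edge crossing a separator it avoids — the endpoints of an edge of $A_i$ lying outside $V(B_i)$ are ``interior'' to $A_i$, so connectivity traps $J$ inside one $A_i$); but $J\subseteq B_i'$ for whichever copy we picked lies in $B_i$ — no: the point is $J \subseteq A_i$ for some $i$ contradicts that no member of $\cH$ lies in a $\cT$-small side. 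Hence $G-X$ is $\cH$-free with $|X| < f(k)$, contradicting that $(G,k)$ is a counterexample. This last claim — that a connected subgraph of $G = A_1\cup A_2\cup A_3$ avoiding all three separators lies in a single $A_i$ — is the one genuinely fiddly topological-combinatorial point, and I expect it (together with getting the bookkeeping in \ref{T2} exactly right, matching the coefficient $2$ in $t \le f(k)-2f(k-1)$) to be the main obstacle; everything else is routine.
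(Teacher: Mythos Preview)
Your approach matches the paper's for \ref{T4} and for the ``not both'' direction of \ref{T2}: delete the separator, apply minimality of $k$ to each side, and assemble a hitting set of size at most $t+2f(k-1)\le f(k)$. That part is correct, and your \ref{T4} argument (the copy, being connected and avoiding each $V(A_i\cap B_i)$, lies in $A_i-B_i$ or $B_i-A_i$ for each $i$; and $\bigcap_i(B_i-A_i)=\varnothing$ since $\bigcup_i A_i=G$) is exactly the paper's.

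The ``at least one'' direction of \ref{T2}, however, has a genuine error. You end up asserting that a connected subgraph cannot meet both $V(A)\setminus V(B)$ and $V(B)\setminus V(A)$ because no single edge has one endpoint in each. But a \emph{path} can: a connected subgraph may perfectly well visit both sides provided it passes through $V(A)\cap V(B)$. So it is simply false that every copy of some $H\in\cH$ in $G$ lies entirely in $A$ or entirely in $B$, and your argument collapses. The paper's fix is short: if neither $A-B$ nor $B-A$ contains a copy, then every connected member of $\cH$ in $G$ meets $V(A\cap B)$, making $V(A\cap B)$ a hitting set of size $<t\le f(k)$ --- contradicting that $(G,k)$ is a counterexample. (Equivalently: since $V(A\cap B)$ is too small to be a hitting set, some copy avoids it, and \emph{that} copy lies in $A-B$ or $B-A$.)

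Both of your attempts at \ref{T3} fail as well. The first invokes ``$t$ is small compared to the smallest order of a graph in $\cH$'', which is nowhere among the hypotheses. The parenthetical alternative claims that an edge of $B$ not in $A$ yields a vertex outside $V(A)$; but if $V(A)=V(G)$ then $V(B)\subseteq V(A)$, and an edge of $B$ has both endpoints in $V(B)\subseteq V(A)$, so no such vertex appears. The clean argument comes for free once \ref{T2} is proved correctly: by that proof, $B-A$ (not just $B$) contains a copy whenever $(A,B)\in\cT$, hence $V(B)\setminus V(A)\ne\varnothing$ and $V(A)\ne V(G)$.
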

\begin{proof}
Observe that $k\geq 2$.
To verify that $\cT$ is a tangle, we only need to check \ref{T2}--\ref{T4}.
Let $(A,B)$ be a separation of $G$ of order less than $t$.
We claim that one of $A-B$ and $B-A$ contains a graph of $\cH$.
If not, set $X=V(A\cap B)$ and observe that $G-X$ is $\cH$-free,
which is impossible as $|X|<t<f(k)$.

Next, suppose that both $A$ and $B$ contain a copy of a graph in $\cH$.
Then, neither of $A-B$ and $B-A$ can contain $k-1$ copies of graphs in $\cH$ 
as $(G,k)$ is a counterexample.
Hence there are a sets $X_A\sub V(A)$, $X_B\sub V(B)$, each of size at most $f(k-1)$,
such that both $A-(V(B)\cup X_A)$ and $B-(V(A)\cup X_B)$ are $\cH$-free.
But then $G-(X_A\cup X_B \cup (V(A)\cap V(B))$ is $\cH$-free
(recall that the graphs in $\cH$ are connected), which is 
impossible as 
\[
|X_A\cup X_B \cup (V(A)\cap V(B))|\leq 2f(k-1)+t\leq f(k).
\]
Therefore, \ref{T2} holds. For~\ref{T3}, observe that $B-A=\emptyset$ if $V(A)=V(G)$,
which clearly implies that $B-A$ cannot contain any graph from $\cH$.

Finally, suppose there are three separations $(A_1,B_1)$, $(A_2,B_2)$, $(A_3,B_3)\in\cT$
such that $A_1\cup A_2 \cup A_3=G$.
Let $X= \bigcup_{i\in [3]}(V(A_i)\cap V(B_i))$, and observe that $|X|\leq 3t\leq f(k)$.
Then, any graph in $\cH$ that is disjoint from $X$ 
must lie in $\bigcap_{i=1}^3B_i-A_i=\emptyset$. (Again, we use here that the graphs in $\cH$ 
are connected.) 
Thus,
$G-X$ is $\cH$-free, which is again a contradiction.
Therefore,~\ref{T4} holds and $\cT$ is a tangle.
\end{proof}

\subsection{Walls}

Let $[r]$ denote the set $\{1,\ldots,r\}$.
The \emph{$r\times s$-grid}, 
$r,s \ge 2$, 
is the graph on the vertex set $[r]\times [s]$ where a vertex $(i,j)$ is adjacent to a vertex $(i',j')$ if and only if $|i - i'| + |j - j' | = 1$.
An \emph{elementary $r$-wall} is the graph obtained from the $2(r+1) \times (r+1)$-grid by deleting all edges of the form $(2i - 1,2j - 1)(2i - 1,2j)$, where $i \in [r+1]$ and $j \in [\lceil r/2 \rceil]$, and also all edges of the form $(2i,2j)(2i,2j + 1)$,
where $i \in [r+1]$ and $j \in [\lfloor (r-1)/2 \rfloor]$, and then deleting the two vertices of degree~$1$.
An elementary $8$-wall is depicted in Figure~\ref{fig:6-wall} (where we assume that first coordinate increases from left to right and the second coordinate increases from bottom to top).

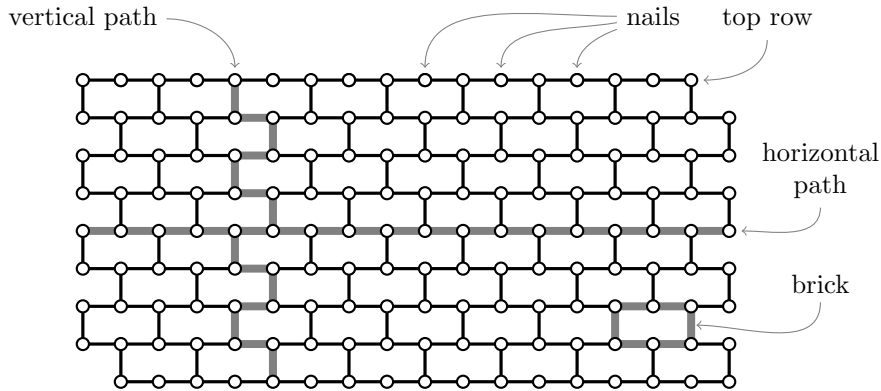
\begin{figure}[ht]
\centering
\begin{tikzpicture}
\tikzstyle{hvertex}=[thick,circle,inner sep=0.cm, minimum size=1.6mm, fill=white, draw=black]
\tikzstyle{marked}=[line width=3pt,color=dunkelgrau]
\tikzstyle{point}=[thin,->,shorten >=2pt,color=dunkelgrau]

\def\wallheight{8}
\def\brickheight{0.5}

\pgfmathtruncatemacro{\lastrow}{\wallheight}
\pgfmathtruncatemacro{\penultimaterow}{\wallheight-1}
\pgfmathtruncatemacro{\lastrowshift}{mod(\wallheight,2)}
\pgfmathtruncatemacro{\lastx}{2*\wallheight+1}

\draw[hedge] (\brickheight,0) -- (2*\wallheight*\brickheight+\brickheight,0);
\foreach \i in {1,...,\penultimaterow}{
  \draw[hedge] (0,\i*\brickheight) -- (2*\wallheight*\brickheight+\brickheight,\i*\brickheight);
}
\draw[hedge] (\lastrowshift*\brickheight,\lastrow*\brickheight) to ++(2*\wallheight*\brickheight,0);

\foreach \j in {0,2,...,\penultimaterow}{
  \foreach \i in {0,...,\wallheight}{
    \draw[hedge] (2*\i*\brickheight+\brickheight,\j*\brickheight) to ++(0,\brickheight);
  }
}
\foreach \j in {1,3,...,\penultimaterow}{
  \foreach \i in {0,...,\wallheight}{
    \draw[hedge] (2*\i*\brickheight,\j*\brickheight) to ++(0,\brickheight);
  }
}

\def\colind{5}
\foreach \j in {2,4,6}{
  \draw[marked] (\colind*\brickheight,\j*\brickheight-2*\brickheight) -- ++ (0,\brickheight) -- ++(-\brickheight,0) -- ++(0,\brickheight) -- ++(\brickheight,0);
}
\draw[marked] (\colind*\brickheight,6*\brickheight) -- ++ (0,\brickheight) -- ++(-\brickheight,0) -- ++(0,\brickheight);

\def\rowind{4}
\foreach \i in {1,...,\lastx}{
  \draw[marked] (\i*\brickheight-\brickheight,\rowind*\brickheight) -- ++(\brickheight,0);
}

\draw[marked] (2*\wallheight*\brickheight,1*\brickheight) -- ++(0,\brickheight) coordinate[midway] (brx)
-- ++(-2*\brickheight,0)
-- ++(0,-\brickheight) -- ++(2*\brickheight,0);

\foreach \i in {1,...,\lastx}{
  \node[hvertex] (w\i w0) at (\i*\brickheight,0){};
}
\foreach \j in {1,...,\penultimaterow}{
  \foreach \i in {0,...,\lastx}{
    \node[hvertex] (w\i w\j) at (\i*\brickheight,\j*\brickheight){};
  }
}
\foreach \i in {1,...,\lastx}{
  \node[hvertex] (w\i w\lastrow) at (\i*\brickheight+\lastrowshift*\brickheight-\brickheight,\lastrow*\brickheight){};
}

\foreach \i in {2,4,...,\lastx}{
  \node[hvertex,fill=white] (w\i w\lastrow) at (\i*\brickheight+\lastrowshift*\brickheight-\brickheight,\lastrow*\brickheight){};
}

\node[anchor=mid] (tr) at (\lastx*\brickheight+0.5,\wallheight*\brickheight+0.8){top row};
\draw[point,out=270,in=0] (tr) to (w\lastx w\wallheight);

\node[anchor=mid] (nails) at (\lastx*\brickheight-1,\wallheight*\brickheight+0.8){nails};
\draw[point,out=180,in=90] (nails) to (w10w\wallheight);
\draw[point,out=190,in=90] (nails) to (w12w\wallheight);
\draw[point,out=200,in=90] (nails) to (w14w\wallheight);

\node[anchor=mid] (vp) at (0,\wallheight*\brickheight+0.8){vertical path};
\draw[point,out=0,in=90] (vp) to (w\colind w\wallheight);

\node[align=center] (hp) at (\lastx*\brickheight+1.2,\rowind*\brickheight+0.8){horizontal\\ path};
\draw[point,out=270,in=0] (hp) to (w\lastx w\rowind);

\node[align=center] (br) at (\lastx*\brickheight+1.2,1*\brickheight+0.8){brick};
\draw[point,out=270,in=0] (br) to (brx);

\end{tikzpicture}
\caption{An elementary $8$-wall}\label{fig:6-wall}
\end{figure}

An \emph{$r$-wall} or simply a \emph{wall} is a subdivision $W$ of an elementary $r$-wall $Z$.
In $Z$ we define the path $P^{(h)}_{j-1}$ for $j\in [r+1]$ as the path on vertices
$ij$ for $i\in [2(r+1)]$ (where we note that $P^{(h)}_0$ as well as $P^{(h)}_r$ are missing
the first or last of these vertices as these are not present in $Z$). 
The paths $P^{(h)}_0,\ldots, P^{(h)}_r$, which are pairwise disjoint, 
are the \emph{horizontal paths} of $Z$. There are also $r+1$ pairwise disjoint 
$P^{(h)}_0$--$P^{(h)}_r$-paths in $Z$, the \emph{vertical paths} $P^{(v)}_0,\ldots,P^{(v)}_r$ of $Z$. 
The path $P^{(h)}_r$ is also called the \emph{top row} of $Z$.
 The vertices of degree~$2$
in the top row are the \emph{nails} of $Z$.
Any $6$-cycle in $Z$ is a \emph{brick} of $Z$.

We keep using the same concepts for walls as for elementary walls. That is, we
will talk about vertical and horizontal paths of $W$, 
and mean the paths that arise from subdividing the corresponding
paths in the elementary wall. A bit of care has to be applied when 
it comes to nails, as there are several choices of vertices in $W$ that correspond
to the (uniquely defined) nails in $Z$. But here, if necessary, we assume that the 
wall $W$ comes with a fixed choice of nails, which allows us to speak about \emph{the} nails of $W$.

Let $s\leq t$.
An \emph{$s$-subwall} $W'$ of a $t$-wall $W$ is subgraph of $W$ that is an $s$-wall
and such that every 
horizontal (vertical) path of $W'$ is a subpath of a unique horizontal (vertical) path of $W$.

\subsection{Tangles and Walls}

We collect more facts about tangles and walls. For more details and proofs see Robertson and Seymour~\cite{RS86}.

Let $\mathcal T$ be a tangle of order $r$, and let $s\leq r$. Let
$\mathcal T'$ be the 
 subset of those $(A,B)\in\mathcal T$ that are
separations of order less than $s$. Then $\mathcal T'$ is again a tangle, the
\emph{truncation of $\mathcal T$ to order~$s$}.

Let $\mathcal{T}$ be a tangle of order $r$ in a graph $H$ and assume that $H$ is a minor of a graph $G$.
We define a tangle $\mathcal{T}_H$ in $G$ \emph{induced by} $H$ as follows.
Let $(C, D)$ be a separation in $G$ of order less than $r$, and let $C_H$
be the induced subgraph of $H$ on all vertices whose branch set in $G$
intersects $C$, and define $D_H$ in the analogous way. Then every edge in $H$ lies
 in $C_H$ or in $D_H$ as  otherwise there would be an edge in $G$ between $C-D$ and $D-C$. 
Moreover, since every branch set that meets $C$ as well as $D$ also contains
a vertex in $C\cap D$, it follows that $|V(C_H\cap D_H)|\leq |V(C\cap D)|$. 
Thus, if we split up the common edges of $C_H$ and $D_H$ we obtain a separation $(C_H,D_H)$
of $H$ of order less than $r$.  
Therefore, either $(C_H, D_H) \in \mathcal{T}$ or $(D_H, C_H) \in \mathcal{T}$ and we then put $(C, D)$ resp.~$(D, C)$ into $\mathcal{T}_H$.
That $\cT_H$ is indeed a tangle was shown by Robertson and Seymour~\cite{RS91}.

Beside the tangle induced by the copies of a certain family $\cH$ of graphs
in a minimal counterexample for the \EPP,
we consider two further tangles.

\begin{lemma}[Robertson and Seymour~\cite{RS91}]\label{lem:KnTangle}
Suppose $n\geq 3$, $t=\lceil \frac{2n}{3} \rceil$, and $\cT$ is the set of all $(t-1)$-separations $(A,B)$ of $K_n$ such that $V(B)=V(K_n)$.  
Then $\cT$ is a tangle. 
\end{lemma}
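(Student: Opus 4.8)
The plan is to check the four tangle axioms \ref{T1}--\ref{T4} directly, showing that $\cT$ is a tangle of order~$t$ (I read a ``$(t-1)$-separation'' as a separation of order at most $t-1$, i.e.\ of order less than $t$, since otherwise \ref{T2} could not hold). The one fact I would isolate first, and use repeatedly, is a structural observation about separations of a complete graph: for every separation $(A,B)$ of $K_n$ at least one of $V(A)$, $V(B)$ equals $V(K_n)$ --- if $v\notin V(A)$ and $w\notin V(B)$ then the edge $vw$ (or, when $v=w$, any edge incident with $v$) would be forced simultaneously into $A$ and into $B$ --- and if in addition the order of $(A,B)$ is less than $n$, then exactly one of $V(A)$, $V(B)$ equals $V(K_n)$, since both would give $|V(A)\cap V(B)|=n$. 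As $t=\lceil 2n/3\rceil<n$ for $n\ge3$, this dichotomy applies to every separation of order less than $t$.

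Granting this, \ref{T1} is immediate from the definition of $\cT$; for \ref{T2}, note that among the two orderings $(A,B)$ and $(B,A)$ of a separation of order less than $t$, exactly one has its second entry with vertex set $V(K_n)$, hence exactly one lies in $\cT$; and \ref{T3} holds because $(A,B)\in\cT$ forces $V(B)=V(K_n)$ and therefore $V(A)\ne V(K_n)$. At this point I would also record the consequence that if $(A,B)\in\cT$ then $V(A)\subseteq V(B)=V(K_n)$, so $V(A)=V(A)\cap V(B)$ and hence $|V(A)|\le t-1$.

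The substantive axiom is \ref{T4}. Suppose $(A_1,B_1),(A_2,B_2),(A_3,B_3)\in\cT$ with $A_1\cup A_2\cup A_3=K_n$, and put $S_i=V(A_i)$, so $|S_i|\le t-1$. Since the $A_i$ cover $E(K_n)$, every pair of vertices of $K_n$ lies in some $S_i$. Passing to the complements $\overline{S_i}=V(K_n)\setminus S_i$ we have $|\overline{S_i}|\ge n-t+1$, and the inequality $3(n-t+1)\ge n+1$ --- equivalently $3t\le 2n+2$, which holds for $t=\lceil 2n/3\rceil$ in each residue class of $n$ modulo $3$ --- yields $\sum_{i=1}^3|\overline{S_i}|>n$. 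Hence some vertex $v$ lies in two of the complements, say $v\notin S_i$ and $v\notin S_j$; then for every other vertex $w$ the pair $\{v,w\}$ can be covered only by the third set $S_k$, forcing $v,w\in S_k$, so $S_k=V(K_n)$ and $|S_k|=n>t-1$, a contradiction. Thus \ref{T4} holds and $\cT$ is a tangle.

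The only place requiring any care is \ref{T4}, and there the single point to get right is the counting inequality $3t\le 2n+2$ that powers the pigeonhole step; all of \ref{T1}--\ref{T3} is routine bookkeeping about separations of $K_n$. Equivalently, \ref{T4} can be phrased as the self-contained combinatorial claim that three subsets of an $n$-element set, each of size less than $\lceil 2n/3\rceil$, cannot between them contain every $2$-element subset, and that is exactly what the argument above establishes.
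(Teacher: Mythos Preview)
Your proof is correct. The paper does not prove this lemma at all: it is stated with attribution to Robertson and Seymour~\cite{RS91} and used as a black box, so there is no argument in the paper to compare yours against. Your direct verification of \ref{T1}--\ref{T4} is the standard one, and the key step---the inequality $3t\le 2n+2$ feeding the pigeonhole argument for \ref{T4}---is handled cleanly. One small remark: your parenthetical reading of ``$(t-1)$-separation'' as ``separation of order at most $t-1$'' is indeed the intended one here, and is necessary for the statement to even make sense.
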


For a $K_t$-expansion $\pi$,
we refer to $\cT_\pi$ as the tangle induced by the tangle in $K_t$ that is described in Lemma~\ref{lem:KnTangle}.

\begin{lemma}[Robertson and Seymour~\cite{RS91}]\label{lem:WallTangle}
Suppose $t\geq 2$
and $W$ is a $t$-wall.
Let $\cT_W$ be the set of all $t$-separations $(A,B)$ of $W$ such that $B$ contains an entire horizontal path.
Then $\cT_W$ is a tangle of order $t+1$.
\end{lemma}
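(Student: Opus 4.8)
The plan is to verify the four tangle axioms \ref{T1}--\ref{T4} directly for the set $\cT_W$ of all $t$-separations $(A,B)$ of the $t$-wall $W$ in which $B$ contains an entire horizontal path. Axiom \ref{T1} is immediate since by definition every element of $\cT_W$ is a separation of order at most $t$, hence of order less than $t+1$. For \ref{T2}, I would take an arbitrary separation $(A,B)$ of $W$ of order at most $t$ and argue that exactly one side contains a full horizontal path. Since $W$ has $t+1$ pairwise disjoint horizontal paths $P^{(h)}_0,\dots,P^{(h)}_t$ and $|V(A)\cap V(B)|\le t$, at least one horizontal path avoids the separator entirely; being connected, it lies wholly in $A$ or wholly in $B$. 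So at least one of $(A,B)$, $(B,A)$ is in $\cT_W$. For the ``exactly one'' part I would show both sides cannot simultaneously contain a full horizontal path: $W$ contains $t+1$ pairwise disjoint vertical paths, each meeting every horizontal path, so a horizontal path in $A\setminus B$ and one in $B\setminus A$ would be joined by $t+1$ disjoint paths, forcing $|V(A)\cap V(B)|\ge t+1$, a contradiction. (One must be slightly careful that a horizontal path could lie in $A\cap B$; but then it lies in both sides, and the same vertical-path argument applied with the other horizontal paths still gives the order bound, so in fact this case forces the separator to contain a whole horizontal path, again of size $\ge$ something one bounds — I would phrase this cleanly by noting a horizontal path contained in both sides has all its vertices in $V(A)\cap V(B)$, which has at least $2(t+1)-1>t$ vertices, impossible.)

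For \ref{T3}, suppose $(A,B)\in\cT_W$ with $V(A)=V(W)$. Then $V(B)\subseteq V(A)$, so $V(B)=V(A)\cap V(B)$ has size at most $t$; but $B$ contains a full horizontal path, which has $2(t+1)-1\ge t+1$ vertices, a contradiction. For \ref{T4}, suppose $(A_1,B_1),(A_2,B_2),(A_3,B_3)\in\cT_W$ with $A_1\cup A_2\cup A_3=W$. Each $B_i$ contains a horizontal path $P^{(h)}_{j_i}$, all of whose vertices lie outside $A_i\setminus B_i$, i.e.\ in $V(B_i)$, but I want to derive that some vertex lies in no $A_i$. The clean way: since $A_1\cup A_2\cup A_3=W$, every vertex of $W$ lies in some $A_i$, hence $V(W)=\bigcup V(A_i)$. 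Consider the $t+1$ disjoint horizontal paths; by the argument for \ref{T2}, the one $P^{(h)}_{j_i}$ witnessing $(A_i,B_i)\in\cT_W$ is disjoint from $V(A_i)$ except on the separator $V(A_i)\cap V(B_i)$. I would instead use a counting/connectivity argument: pick a horizontal path $P$ disjoint from all three separators $V(A_i)\cap V(B_i)$ — possible if $3t<t+1$, which fails, so this needs more care. The right approach for \ref{T4} is to use the ``$\cT_W$-large block'' type reasoning or, following Robertson--Seymour, to observe that a wall has a specific highly-connected core; concretely, I would show that the union of three $\cT_W$-small sides cannot cover $W$ because $W$ minus any $3t$ vertices still contains a connected subgraph meeting every horizontal path, which cannot fit in any single $A_i$.

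The main obstacle I anticipate is \ref{T4}: establishing that three small sides cannot cover the wall. The cheap union-bound argument ($3t<t+1$) does not work, so one genuinely needs the geometry of the wall — the fact that walls are well-linked, i.e.\ the $t+1$ horizontal and $t+1$ vertical paths form a grid-like structure that no small separator can carve into pieces each avoiding a horizontal path. I expect the proof to go through the standard fact (implicit in Robertson--Seymour) that the horizontal paths of $W$ are pairwise ``$t$-linked'' in the sense that separating any two of them requires $\ge t+1$ vertices, and then to observe that if $A_1\cup A_2\cup A_3=W$ then some $A_i$ must contain, up to a small boundary, one of the horizontal paths witnessing another separation, producing a separator that is too small — i.e.\ deriving a contradiction by showing two sides would both contain full horizontal paths. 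Since this is exactly the content of Lemma~\ref{lem:WallTangle} in Robertson and Seymour~\cite{RS86,RS91}, I would in fact just cite that reference for \ref{T4} (and perhaps for the whole lemma), as the excerpt already does for Lemmas~\ref{lem:KnTangle} and~\ref{lem:WallTangle}; the self-contained verification of \ref{T1}--\ref{T3} above is the part worth spelling out, while \ref{T4} is the one step where I would lean on the cited literature rather than reprove the well-linkedness of walls from scratch.
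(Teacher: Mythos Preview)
The paper gives no proof of this lemma at all: it is stated with attribution to Robertson and Seymour~\cite{RS91} and left without argument, just like Lemma~\ref{lem:KnTangle}. So there is nothing to compare your sketch against; your eventual suggestion to ``just cite that reference \ldots\ as the excerpt already does'' is precisely what the paper does, for the whole lemma and not only for~\ref{T4}.

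That said, your partial verification is largely sound. The arguments for \ref{T1}--\ref{T3} are correct (for \ref{T3} note every horizontal path has at least $2t+1>t$ vertices, as you say). For \ref{T2} your ``exactly one'' argument via the $t+1$ disjoint vertical paths is the right idea. You are also right that \ref{T4} is where the real content lies and that the naive union bound $3t<t+1$ fails; a clean self-contained argument does require the grid structure (essentially that any two horizontal paths are linked by $t+1$ disjoint vertical paths), and deferring to~\cite{RS91} is the standard move here.
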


We also need the converse direction, namely that a tangle of large order forces the existence of a large wall.

\begin{theorem}[Robertson and Seymour~\cite{RS91}]\label{lemma: tangle wall}
For every positive integer $t$, there is an integer $T(t)$
such that if $G$ is a graph that has a tangle $\cT$ of order $T(t)$,
then there is a $t$-wall $W$ in $G$ such that $\cT_W$ is a truncation of $\cT$.
\end{theorem}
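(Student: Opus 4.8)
The plan is to deduce this from the Excluded Grid Theorem together with the standard duality between tangles and branch-/tree-width, and then to carry out an \emph{alignment} step so that the wall we produce is the one at which the given tangle points.

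First I would recall that a tangle of order $r$ in $G$ is an obstruction to a branch-decomposition of width less than $r$: a branch-decomposition of small width would produce a separation of order less than $r$ none of whose two orientations could belong to $\cT$, contradicting \ref{T2} and \ref{T4}. Hence $G$ has branch-width at least $r$ and therefore tree-width at least $r-1$. Choosing $T(t)$ large enough, the Excluded Grid Theorem of Robertson and Seymour~\cite{RS86} then yields an $N\times N$-grid minor in $G$ for any prescribed $N=N(t)$; since the elementary $N'$-wall is subcubic and a minor of the $O(N')\times O(N')$-grid, it is a topological minor, so such a grid minor forces an $s$-wall $W_0\sub G$ (as a subgraph, with its inherited horizontal/vertical path structure) with $s$ as large relative to $t$ as we wish. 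How large $s$ has to be will be fixed by the alignment step.

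The substantive part is the alignment, because $W_0$ need not lie where $\cT$ points, so the induced tangle $\cT_{W_0}$ of Lemma~\ref{lem:WallTangle} (read, via the induced-tangle construction of this section, as a tangle of $G$ of order $s+1$) need not have $\cT$ as a common truncation. I would place $W_0$ inside the $\cT$-large block and then compare the order-$(t+1)$ truncations of $\cT$ and of $\cT_{W_0}$. The key geometric observation is a confinement lemma: if $(A,B)$ has order at most $t$ and $B$ contains an entire horizontal path of $W_0$, then $A-B$ is contained in the union of at most $t$ horizontal and at most $t$ vertical paths of $W_0$, because distinct vertical (resp.\ horizontal) paths that meet $A-B$ each consume a distinct vertex of $V(A\cap B)$. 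Consequently, once $s$ is large, either $\cT$ and $\cT_{W_0}$ already agree up to order $t+1$ — in which case a suitably central $t$-subwall $W$ of $W_0$ has $\cT_W$ equal to the order-$(t+1)$ truncation of $\cT$, which is exactly what is required — or there is a separation of $\cT$ of order at most $t$ whose $\cT$-small side captures almost all of $W_0$, witnessing that $W_0$ was placed badly; in that case I restrict $\cT$ to its $\cT$-large side (which still carries a tangle of order at least $T(t)-t$), extract a fresh large wall there, and repeat.

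The main obstacle is precisely this alignment. Producing a large wall is entirely standard; arranging that the wall is the one \emph{seen} by the prescribed tangle — so that $\cT_W$ is genuinely a truncation of $\cT$ and not of some unrelated tangle — is the content, and the delicate point is the bookkeeping that keeps the final bound on $T(t)$ a function of $t$ alone: one must argue that the relocation separations form a nested family and that each relocation costs only $O(t)$ in tangle order and in wall size, so that a bounded number of rounds suffices and terminates with the desired $t$-wall.
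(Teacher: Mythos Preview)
The paper does not prove this theorem; it is quoted from Robertson and Seymour~\cite{RS91} and used as a black box, so there is no proof here to compare your attempt against.

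On the merits of your sketch: the reduction to the excluded-grid theorem via tangle/branch-width duality is fine and standard. The alignment step, which you rightly flag as the substantive part, has a genuine gap. Your plan is to iterate: if the wall $W_0$ lies on the $\cT$-small side of some order-$\le t$ separation, pass to the $\cT$-large side, re-invoke the grid theorem there, and repeat. But nothing in your argument bounds the number of rounds by a function of $t$ alone. Each round costs only $O(t)$ of tangle order, yes---but the grid theorem applied to the new side gives you \emph{no control} over where the new wall lands; it may again sit on the $\cT$-small side of a fresh order-$t$ separation. Nestedness of the relocation separations does not force termination either: a nested chain of order-$t$ separations can be arbitrarily long in a large graph, so the number of rounds could grow with $|V(G)|$, and then $T(t)$ would not be a function of $t$. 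You would need a potential, bounded in terms of $t$, that strictly decreases at every round; you have not supplied one, and it is not clear one exists for this scheme.

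Robertson and Seymour's argument in~\cite{RS91} avoids this by not iterating at all: the grid (wall) minor is built directly from the tangle, so that by construction it lies on the $\cT$-large side of every small separation. Your confinement observation---that at most $t$ horizontal and $t$ vertical paths of $W_0$ can meet the $\cT$-small side of an order-$t$ separation---is correct and is precisely what one uses \emph{after} the wall has been placed correctly to verify that $\cT_W$ is a truncation of $\cT$; it does not, however, make an iterative relocation converge.
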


\subsection{Linkages}\label{sec:linkages}

Let $G$ be a graph, $k\in\N$, and let $A,B$ be subgraphs, or vertex sets, of $G$.
An \emph{$A$--$B$-path} is a path from some $a \in A$ to some $b \in B$ that is internally disjoint from $A \cup B$.
Moreover, an \emph{$A$-path} is an $A$--$A$-path with at least one edge; if the path consist
of a single edge, then this edge must not lie in $A$.

Let $W$ be a wall with nails $N$. A \emph{$W$-linkage} $\cL$ of order $k$, or simply a \emph{linkage}, 
is a set of $k$ disjoint $W$-paths with first and last vertices in $N$. 
The top row of $W$ defines a linear order $\leq$ (in fact two; we pick one) on the nails.
Consider two paths $P,Q$ in $\cL$, and let the endvertices of $P$ be $p_1 < p_2$, and let 
the endvertices of $Q$ be $q_1<q_2$. By symmetry, we may assume that $p_1<q_1$.
Then $P$ and $Q$ are \emph{in series} if $p_2 < q_1$; 
they are \emph{nested} if $p_1 < q_1 < q_2 < p_2$;
and they are \emph{crossing} if $p_1 < q_1 < p_2 < q_2$; see Figure~\ref{linkagefig}.
The linkage $\cL$ is \emph{in series}, \emph{nested}, or \emph{crossing} if all paths in $\cL$ are mutually in series, nested, or crossing.
We call $\cL$ \emph{pure} if it is in series, nested, or crossing.

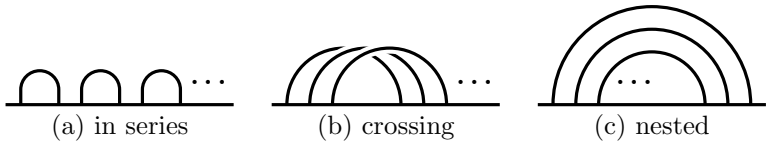
\begin{figure}[ht]
\centering
\begin{tikzpicture}

\tikzstyle{oben}=[line width=1.5pt,double distance=1.2pt,draw=white,double=black]

\begin{scope}
\def\radius{0.25}
\def\step{0.3}
\draw[hedge] (0,0) -- (3,0);
\draw[hedge] (0.2,0) -- ++(0,0.2) arc (180:0:\radius) -- ++(0,-0.2);
\draw[hedge] (0.2+2*\radius+\step,0) -- ++(0,0.2) arc (180:0:\radius) -- ++(0,-0.2);
\draw[hedge] (0.2+4*\radius+2*\step,0) -- ++(0,0.2) arc (180:0:\radius) -- ++(0,-0.2);
\node at (2.7,0.3) {{\bf \dots}};
\node at (1.5,-0.3) {(a) in series};
\end{scope}

\begin{scope}[shift={(3.5,0)}]
\draw[oben] (0.2,0) arc (180:0:0.75);
\draw[oben] (0.2+0.3,0) arc (180:0:0.75);
\draw[oben] (0.2+2*0.3,0) arc (180:0:0.75);
\draw[hedge] (0,0) -- (3,0);
\node at (2.7,0.3) {{\bf \dots}};
\node at (1.5,-0.3) {(b) crossing};
\end{scope}

\begin{scope}[shift={(7,0)}]
\draw[hedge] (0,0) -- (3,0);
\draw[hedge] (0.2,0) arc (180:0:1.3);
\draw[hedge] (0.2+0.3,0) arc (180:0:1.0);
\draw[hedge] (0.2+2*0.3,0) arc (180:0:0.7);
\node at (1.3,0.3) {{\bf \dots}};
\node at (1.5,-0.3) {(c) nested};
\end{scope}

\end{tikzpicture}
\caption{The three types of pure linkages}\label{linkagefig}
\end{figure}

Assume $W$ to be contained in a $\Sigma$-labelled graph, and let $\alpha\in \Sigma$.
A $W$-linkage $\cL$ is called $\alpha$-\emph{clean}\footnote{We adapt here a notion introduced by Huynh et al.~\cite{HJW19} to the labelled setting. To keep notation simple, we have slightly weakened it.}
if
\begin{itemize}
	\item $\cL$ is pure, and
	\item every path in $\cL$ contains a vertex of label $\alpha$.
\end{itemize}
Moreover, let $(\cP,\cQ)$ be a partition of a   $W$-linkage $\mathcal P\cup\mathcal Q$.
We call $(\cP,\cQ)$ a pair of $(\alpha,\beta)$-\emph{clean} $W$-linkages if
\begin{itemize}
	\item $\cP$ is $\alpha$-clean  and if $\cQ$ is  $\beta$-clean,
	\item $|\cP| = |\cQ|$, and
	\item for all $P,P'\in \cP$ and $Q\in \cQ$ with endvertices $p_1<p_2$, $p_1'<p_2'$ and $q_1<q_2$,
	we have $q_1,q_2\not\in [p_1,p_1']\cup [p_2,p_2']$. Here, $[p_1,p_1']$ is the set of all nails $v$
with $p_1\leq v\leq p'_1$, and $[p_2,p_2']$ is defined similarly.
\end{itemize}

\subsection{Flat walls}\label{sec:flatwall}

In their so-called \emph{flat wall theorem} Robertson and Seymour~\cite{RS95} proved that every graph with a huge wall contains a large clique-minor or
a large \emph{flat} wall, a wall that lies in a \emph{nearly} planar part of the graph.
Huynh, Joos, and Wollan~\cite{HJW19} extended the theorem to graphs whose 
edges are labelled with elements from two groups.
We present below a version of the theorem that is adapted to labelled graphs.\footnote{
To derive the stated version for their theorem, we choose for both groups $(\Z,+)$. 
For an arbitrary ordering $e_1,e_2,\ldots$ of the edges of $G$,
we assign to $e_i$ the group value $2^i$ in the first (second) coordinate
if one of the endpoints of $e_i$ is labelled with $\alpha$ ($\beta$) and otherwise $0$.
Then every cycle that is non-zero in both coordinates corresponds to a cycle that contains both a vertex labelled $\alpha$ and a vertex labelled $\beta$.
}
For our purposes it is not important that the wall is flat,
so we simply drop the condition.

We need a little bit more notation before we can state our main tool, the result
of Huynh et al.
We define a sort of doubly-labelled expansion of a complete graph. For technical 
reasons, we weaken the definition of an expansion slightly.
Let $\pi:V(K_n)\cup E(K_n)\to V(G)\cup E(G)$ for some graph $G$, and let $\alpha,\beta$
be two labels.
We say $\pi$ is a \emph{$(\alpha,\beta)$-thoroughly labelled} (pseudo) $K_n$-expansion in $G$
if
\begin{itemize}
	\item $\pi(x)$ is a tree for every vertex $x$ of $K_n$,
	\item $\pi(xy)$ is a set of at most two edges joining $\pi(x)$ and $\pi(y)$, and
	\item for every $\gamma\in\{\alpha,\beta\}$ and every triple $x,y,z$ of vertices of $K_n$, 
	there exist $e_{ab}\in\pi(ab)$ for each $ab\in\{xy,xz,yz\}$ such that
	$\pi(x)\cup\pi(y)\cup\pi(z)\cup e_{xy}\cup e_{xz}\cup e_{yz}$
	contains a vertex with label~$\gamma$.
\end{itemize}
Although, technically, these pseudo expansions are not expansions in the strict sense
we defined earlier, we will simply call them 
{$(\alpha,\beta)$-thoroughly labelled} $K_n$-expansions, which is already long enough.

For walls we have an analogous concept. A wall $W$ is \emph{thoroughly $\alpha$-labelled}
if every  brick contains a vertex with label $\alpha$, and the wall is 
\emph{thoroughly $(\alpha,\beta)$-labelled} if every brick contains a vertex with 
label $\alpha$ and a vertex with label~$\beta$.

\begin{theorem}[Huynh, Joos, and Wollan~\cite{HJW19}]\label{thm:simpleflatwall}
For every $t\in \N$,
there exists an integer $t'$
such that if $G$ is an $(\alpha,\beta)$-labelled graph 
that contains a $t'$-wall $W$ then  one of the following statements holds.
\begin{enumerate}[label=\rm (\roman*)]
	\item\label{case1} There is an $(\alpha,\beta)$-thoroughly labelled $K_t$-expansion $\pi$ in $G$
	such that $\cT_{\pi}$ is a truncation of $\cT_W$.
	\item\label{case2} There is a $100t$-wall $W_0$
	such that $\cT_{W_0}$ is a truncation of $\cT_W$ and 
	\begin{enumerate}[label=\rm (\alph*)]
		\item\label{case2a} $W_0$ is $(\alpha,\beta)$-thoroughly labelled,
		\item\label{case2b} for some $\gamma\in \{\alpha,\beta\}$, 
		the wall $W_0$ is $\gamma$-thoroughly labelled and
		has an $(\{\alpha,\beta\}\sm \gamma)$-clean $W_0$-linkage of size $t$, or
		\item\label{case2c} $W_0$ has a pair of $(\alpha,\beta)$-clean $W_0$-linkages of size $t$.
	\end{enumerate}
	\item\label{case3} For some $\gamma\in \{\alpha,\beta\}$, 
there is a set $Z$ such that $|Z|< t'$ and the unique $\cT_W$-large block of $G-Z$ 
does not contain any vertex labelled with~$\gamma$.
\end{enumerate}
\end{theorem}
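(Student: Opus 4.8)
The plan is to deduce Theorem~\ref{thm:simpleflatwall} from the two-group flat wall theorem of Huynh, Joos and Wollan~\cite{HJW19} by means of the edge-weight encoding sketched in the footnote, simply discarding from their conclusion the flatness assertion, which we do not need. Concretely, fix an enumeration $e_1,e_2,\dots$ of $E(G)$ and define $\varphi\colon E(G)\to\Z\times\Z$ by letting the first coordinate of $\varphi(e_i)$ be $2^i$ if some endpoint of $e_i$ is labelled $\alpha$ and $0$ otherwise, and symmetrically with $\beta$ in the second coordinate. The one fact driving everything is the following claim: for every subgraph $F$ of $G$, the sum $\sum_{e\in E(F)}\varphi(e)$ is non-zero in the first coordinate if and only if $F$ contains a vertex labelled $\alpha$, and the analogous statement holds for the second coordinate and $\beta$. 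This is immediate, since every $\varphi$-value is coordinatewise non-negative, an edge has positive first coordinate precisely when one of its endpoints carries $\alpha$, and in all the shapes of $F$ relevant below---cycles, bricks, paths of positive length, and unions of branch-trees joined by their connecting edges---every vertex of $F$ is incident to an edge of $F$; hence the first coordinate of the sum is a sum of non-negative integers that is positive exactly when $F$ meets a vertex labelled $\alpha$. Thus each label condition occurring in Theorem~\ref{thm:simpleflatwall} is equivalent to the corresponding non-zeroness condition over $\Z\times\Z$ used in~\cite{HJW19}.

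Next I would take $t'$ to be the integer furnished by the theorem of~\cite{HJW19} for the given $t$ (enlarging it if needed so that the bound appearing in its apex outcome stays below $t'$), apply that theorem to the labelled graph $(G,\varphi)$ and the $t'$-wall $W$, and discard flatness. Its three outcomes then translate verbatim via the claim. A clique-expansion outcome produces a $K_t$-expansion in their relaxed sense---at most two parallel connecting edges between any two branch-trees---such that for each coordinate and each triple of branch-trees some choice of the three connecting edges makes the resulting subgraph non-zero in that coordinate, with induced tangle a truncation of $\cT_W$; by the claim this is precisely an $(\alpha,\beta)$-thoroughly labelled $K_t$-expansion $\pi$ with $\cT_\pi$ a truncation of $\cT_W$, which is case~\ref{case1}. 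A wall outcome produces a $100t$-wall $W_0$ with $\cT_{W_0}$ a truncation of $\cT_W$, together with one of: every brick non-zero in both coordinates; for some coordinate $\gamma$, every brick non-zero in coordinate $\gamma$ plus a pure linkage of order $t$ whose paths are each non-zero in the other coordinate and obey the positional constraint on nails; or a pair of pure linkages of order $t$, non-zero in the respective coordinates, with the analogous positional constraint---which the claim turns into~\ref{case2a}, \ref{case2b} and~\ref{case2c} respectively, i.e.\ case~\ref{case2}. Finally, an apex outcome produces, for some coordinate $\gamma$, a set $Z$ with $|Z|<t'$ such that the $\cT_W$-large block of $G-Z$ has no cycle that is non-zero in coordinate $\gamma$; that block is $2$-connected and large (a standard consequence of $\cT_W$ having order $t'+1$, since after deleting $|Z|\le t'-1$ vertices at least two full horizontal paths of $W$ survive and force a large $2$-connected piece), so each of its vertices lies on a cycle inside it, whence by the claim the block carries no vertex labelled $\gamma$: this is case~\ref{case3}.

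The main obstacle is not topological---all of that is outsourced to~\cite{HJW19}---but a matter of careful bookkeeping: one must line up, definition against definition, the notions of ``clean'' linkage, of the pair of ``parallel'' linkages, and of the relaxed clique expansion with doubly-used connecting edges as they appear in~\cite{HJW19} with the (deliberately slightly weakened) definitions of $\alpha$-clean linkage, $(\alpha,\beta)$-clean pair of linkages, and $(\alpha,\beta)$-thoroughly labelled $K_t$-expansion fixed above, and in particular to check that the positional constraints on nails attached to the linkage outcomes in~\cite{HJW19} are exactly the ones we impose in the definition of clean linkages. A minor separate point is to confirm the shape of the $\cT_W$-large block used in the apex case, which, as indicated, follows from $\cT_W$ having large order.
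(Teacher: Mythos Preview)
Your proposal is correct and follows precisely the approach the paper itself indicates: the paper does not prove this theorem but only gives the footnote sketch of the $2^i$-encoding into $\Z\times\Z$ that reduces the labelled statement to the two-group flat wall theorem of~\cite{HJW19}, which is exactly what you do (with welcome additional detail on translating each outcome and handling the apex case via the $2$-connectedness of the $\cT_W$-large block).
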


\section{Necessity}

In this section we show that all labelled graphs $H$ such that the class of all $H$-expansion has the \EPP{} must have at least the properties stated in Theorem~\ref{thm:2consimple}.
We split the proof in several lemmas establishing gradually more properties of such $H$.

\begin{lemma}\label{outerlem}
Let $H$ be a labelled graph such that the labelled $H$-expansions have the 
Erd\H os-P\'osa property. Then 
there is an embedding of $H$  in the plane such that all 
its labelled vertices are on the boundary of the outer face.
\end{lemma}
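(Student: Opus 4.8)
The plan is to argue by contraposition: if $H$ has no planar embedding with all labelled vertices on the outer face, then the labelled $H$-expansions do not have the \EPP. A convenient reformulation is that, for \emph{every} planar embedding of $H$ (if $H$ is planar at all — and if $H$ is non-planar we are done by Theorem~\ref{RSmetathm}), at least one labelled vertex is strictly inside the drawing. More robustly, by a classical fact the ``labelled vertices cofacial'' property is equivalent to the following: the graph $H^+$ obtained from $H$ by adding a new vertex $w$ adjacent to all labelled vertices is planar. So I would split into two cases. If $H$ itself is non-planar, Theorem~\ref{RSmetathm} already tells us the (unlabelled, hence also labelled) $H$-expansions fail the \EPP, and we are done. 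Otherwise $H$ is planar but $H^+$ is not.

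In the main case ($H$ planar, $H^+$ non-planar), the idea is to build arbitrarily large counterexamples to every candidate \EP{} function by planting many disjoint ``cores'' on a highly connected but low-genus host that prevents a small hitting set. Concretely, I would take a large toroidal (or high-genus) grid-like graph $G_0$, or more simply a sufficiently large wall-like graph drawn on a surface of genus one. Because $H^+$ is not planar, no labelled $H$-expansion can be drawn in a disc; any labelled $H$-expansion inside $G$ must, after we contract appropriately, exhibit $H^+$ as a minor of the surface-part of $G$ plus a single ``apex'' provided by the labels — but on a genus-$1$ surface one can realise at most a bounded number of vertex-disjoint such obstructions without a large separator. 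The quantitative heart is: for the genus-$1$ host $G$ one shows (i) $G$ contains no $k$ disjoint labelled $H$-expansions for $k$ larger than some absolute constant (because each would force a genus contribution), while (ii) deleting any set $X$ of size $f(k)$ still leaves a labelled $H$-expansion, since the host is $f(k)$-connected in the relevant sense and a labelled $H$-expansion only needs a bounded-size piece of it. Letting the host grow makes $f(k)$ arbitrarily large while $k$ stays bounded, contradicting the \EPP.

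The cleanest way to organise step (ii) is probably to use the standard Euler-genus argument: if all labelled vertices of $H$ lie on a common face in some embedding then $H^+$ is planar, so the contrapositive gives that in every embedding of $H$ into the plane some label is interior, and one then transfers this to an embedding obstruction on the host. I would also want to handle the degenerate situations (labels on a vertex of degree $\le 1$, multiple components, isolated vertices) carefully, but for this lemma connectivity is not assumed, so I should phrase the host-construction so it does not secretly require $H$ to be connected — plant a separate genus contribution per component if needed, or reduce to a single component carrying an ``interior'' label.

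The main obstacle I anticipate is step (ii): verifying that after deleting a large set $X$ the toroidal host \emph{still} contains a labelled $H$-expansion. This is where one must be careful that the construction really forces the labels into the right place along the host and that no small separator destroys all copies. I expect to need a genus/Euler-formula counting argument (à la the ``only planar $H$ work'' direction of Robertson–Seymour, Theorem~\ref{RSmetathm}) combined with the specific structure of toroidal grids; essentially one adapts the classical proof that $H$-expansions fail the \EPP{} for non-planar $H$, replacing ``non-planar $H$'' by ``non-planar $H^+$'' and ``$H$-expansion'' by ``labelled $H$-expansion,'' using that a labelled $H$-expansion drawn near a single face of the host would yield a planar drawing of $H^+$.
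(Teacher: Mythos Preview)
Your plan has a genuine gap in step~(i). You claim that on a toroidal host only boundedly many disjoint labelled $H$-expansions can coexist because each ``forces a genus contribution'' via $H^+$. But a labelled $H$-expansion is, as an abstract graph, just an $H$-expansion of a planar $H$, hence itself planar; the labels are decorations on vertices and neither raise the genus nor supply an apex vertex that would make $H^+$ appear as a minor of the host. There is no ``apex provided by the labels.'' You also never specify where the labels on the torus sit. If every vertex carries every label of $\Sigma$, then labelled $H$-expansions coincide with ordinary $H$-expansions, and a large toroidal grid (which contains large planar subgrids) has arbitrarily many disjoint ones---so step~(i) fails outright. If the labels are placed more sparingly, you must say where and then prove both (i) and (ii) for that particular placement; the genus of the host alone does neither job.

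The paper sidesteps this by staying in the plane. Let $\ell\ge 2$ be the minimum number of faces whose boundaries cover all labelled vertices of $H$ in any planar embedding. Build a large planar grid with $\ell$ well-separated holes and label every vertex on each hole boundary with all of $\Sigma$. By minimality of $\ell$, any labelled $H$-expansion $H'$ in this host must have the $\ell$ holes lying in $\ell$ \emph{pairwise distinct} faces of $H'$; a second disjoint expansion $H''$ must (again by minimality of $\ell$) contain a path from one hole to another, and such a path necessarily meets $H'$. So there are never two disjoint labelled $H$-expansions, while enlarging the grid forces any hitting set to be large. The obstruction is a planarity/separation argument in a fixed planar host, not genus additivity.
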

\begin{proof}
First, we observe that we may assume $H$ to be planar.
Indeed, by Theorem~\ref{RSmetathm}, 
 non-planar graphs do not enjoy the (ordinary) Erd\H os-P\'osa property.
Then, if we label every vertex in any graph $G$ with all the labels of $H$, 
the labelled $H$-expansions do not have the \EPP~for the same reasons
as in the unlabelled case.

We thus assume that $H$ is a planar labelled graph that, however, does not have any embedding 
in the plane such that all its labelled vertices are on the boundary of the outer face.
Observe that, in particular, $H$ must have a component with that property.
Choose a minimum number $\ell$ such that there is an embedding of $H$ in the plane
in which the labelled vertices are contained in the union of $\ell$ face boundaries. 
By assumption, $\ell\geq 2$.

Let $R\in\mathbb N$ be sufficiently large, in a sense that will be made precise 
later in the proof.
Moreover, let $\Sigma$ be the alphabet containing all labels of $H$.
Consider a plane $\ell R\times \ell R$-grid, and pick $\ell$ mutually disjoint cycles
$C_1,\ldots C_\ell$, each  of length
at least $R$ (roughly $R/4\times R/4$ squares), so that each has distance at least $R/4$ from the outer face
and so that each two are at a distance of at least $R/4$
from each other. Let $G$ be the graph obtained by deleting the 
vertices in the interior of each $C_i$, and labelling every vertex in $\bigcup_{i=1}^\ell V(C_i)$
with all labels in $\Sigma$.

In what follows we see
 that every labelled $H$-expansion separates the interiors of the cycles $C_i$ from each other.
Then it will be easy to deduce that there are no two disjoint labelled $H$-expansions.
The fact that we choose $R$ large enough ensures that every hitting set has to be large (as its size grows with $R$).

Since $R$ is chosen to be large enough, $G$ contains a labelled $H$-expansion.
Indeed, for sufficiently large $R$ the graph $G$ contains an unlabelled $H$-expansion
such that every labelled vertex of $H$ maps to a branch set whose vertices of degree 
at least 3 are all contained in the same $C_i$ because there is an embedding of $H$ in the plane
in which the labelled vertices are contained in the union of $\ell$ face boundaries.
Such an unlabelled $H$-expansion is also a labelled $H$-expansion.

By increasing $R$, we can force the minimum size of a hitting set for labelled $H$-expansions to be arbitrarily large. 
Thus, to finish the proof it suffices to show that $G$ does not contain any two disjoint labelled $H$-expansions.

Let $H'$ be some labelled $H$-expansion in $G$. 
Denoting the interior faces
of the cycles $C_1,\ldots, C_\ell$ by $F_1,\ldots, F_\ell$, we see 
that $H'$ has a face $F'_i\supseteq F_i$ for each $i\in[\ell]$.
The faces $F'_1,\ldots, F'_\ell$ are pairwise distinct:
as the face boundaries of  $F'_1,\ldots, F'_\ell$ contain all the labelled vertices of $G$ in $H'$,
it follows from the
minimality of~$\ell$ that no two of these faces coincide.

Next, suppose there is a second labelled $H$-expansion $H''$ in $G$ that is disjoint
from $H'$. Again, the minimality of $\ell$ implies that $H''$ has a component 
that contains a vertex from $C_1$ as well as a vertex from $C_2$ (after relabelling $C_1,\ldots,C_\ell$).  
In particular, $H''$ contains a path $P$ that starts in a vertex of $C_1$ and ends in a vertex of $C_2$. 
Then, however, $P$ starts in $F_1'$ or in its boundary, and ends in $F'_2$ or in its boundary. 
As $F'_1\neq F_2'$ it follows that $P\subseteq H''$ meets $H'$, which shows that $H'$ and $H''$ 
are not disjoint.
\end{proof}

Recall that a labelled graph is simply-labelled if each labelled vertex only has one label and all labelled vertices have the same label.
We may use this notion for subgraphs of labelled graphs, too.

Let $H$ be a labelled planar graph $H$ that has an embedding in the plane in which 
all labelled vertices are on the boundary of the outer face. Define the 
\emph{label homogeneity} of $H$ as  the smallest integer $s$ such that 
for every sufficiently large integer $n$ there is a labelling of the vertices in the 
top row $P$ of an elementary $n$-wall $W$ such that $H$ is a labelled minor
of $W$ and such that there are $s$ simply-labelled subpaths of $P$ that cover all 
labelled vertices of $P$. 

\begin{figure}[ht]
\centering
\begin{tikzpicture}

\def\wallheight{12}
\def\brickheight{0.25}
\def\bh{\brickheight}

\tikzstyle{hvertex}=[thick,circle,inner sep=0.cm, minimum size=1.5mm, fill=white, draw=black]
\tikzstyle{avx}=[hvertex,fill=black]
\tikzstyle{bvx}=[hvertex,fill=dunkelgrau]
\tikzstyle{wed}=[draw,very thick,color=dunkelgrau]
\tikzstyle{ed}=[draw,line width=2pt]

\pgfmathtruncatemacro{\lastrow}{\wallheight}
\pgfmathtruncatemacro{\penultimaterow}{\wallheight-1}
\pgfmathtruncatemacro{\lastrowshift}{mod(\wallheight,2)}
\pgfmathtruncatemacro{\lastx}{2*\wallheight+1}

\draw[wed] (\brickheight,0) -- (2*\wallheight*\brickheight+\brickheight,0);
\foreach \i in {1,...,\penultimaterow}{
  \draw[wed] (0,\i*\brickheight) -- (2*\wallheight*\brickheight+\brickheight,\i*\brickheight);
}
\draw[wed] (\lastrowshift*\brickheight,\lastrow*\brickheight) to ++(2*\wallheight*\brickheight,0);

\foreach \j in {0,2,...,\penultimaterow}{
  \foreach \i in {0,...,\wallheight}{
    \draw[wed] (2*\i*\brickheight+\brickheight,\j*\brickheight) to ++(0,\brickheight);
  }
}
\foreach \j in {1,3,...,\penultimaterow}{
  \foreach \i in {0,...,\wallheight}{
    \draw[wed] (2*\i*\brickheight,\j*\brickheight) to ++(0,\brickheight);
  }
}

\foreach \i in {1,...,\lastx}{
  \coordinate (w\i w0) at (\i*\brickheight,0);
}
\foreach \j in {1,...,\penultimaterow}{
  \foreach \i in {0,...,\lastx}{
    \coordinate (w\i w\j) at (\i*\brickheight,\j*\brickheight);
  }
}
\foreach \i in {1,...,\lastx}{
  \coordinate (w\i w\lastrow) at (\i*\brickheight+\lastrowshift*\brickheight-\brickheight,\lastrow*\brickheight);
}

\draw[ed] (w11w\lastrow) to ++(4*\bh,0) to ++(0,-\bh) to ++(-4*\bh,0) to ++(0,\bh); 
\draw[ed] (w15w\lastrow) to (w19w\lastrow) to ++(0,-\bh) to ++(\bh,0) to ++(0,-\bh) to ++(-\bh,0)
          to ++(0,-\bh) to ++(-12*\bh,0) to ++(0,\bh) to ++(\bh,0) to ++(0,\bh) to ++(-\bh,0) to ++(0,\bh)
          to ++(2*\bh,0) to ++(0,-\bh) to ++(\bh,0) to ++(0,-\bh) to ++(8*\bh,0) to ++(0,\bh) to ++(-\bh,0)
          to ++(0,\bh);
\draw[ed] (w7w\lastrow) to ++(-4*\bh,0) to ++(0,-\bh) to ++(\bh,0) to ++(0,-\bh) to ++(-\bh,0)
          to ++(0,-\bh) to ++(\bh,0) to ++(0,-\bh) to ++(-\bh,0) to ++(0,-\bh)
          to ++(20*\bh,0) to ++(0,\bh) to ++(\bh,0) to ++(0,\bh) to ++(-\bh,0)
          to ++(0,\bh) to ++(\bh,0) to ++(0,\bh) to ++(-\bh,0) to ++(0,\bh) to ++(-2*\bh,0)
          to ++(0,-\bh) to ++(\bh,0) to ++(0,-\bh) to ++(-\bh,0)
          to ++(0,-\bh) to ++(\bh,0) to ++(0,-\bh) to ++(-\bh,0)
          to ++(-15*\bh,0) to ++(0,\bh) to ++(-\bh,0)
          to ++(0,\bh) to ++(\bh,0) to ++(0,\bh) to ++(-\bh,0)
          to ++(0,\bh);

\foreach \i in {3,5,7,9,11}{
  \node[avx] at (w\i w\lastrow) {};
}
\foreach \i in {13,15,17,19,21}{
  \node[bvx] at (w\i w\lastrow) {};
}

\node at (-0.9,1.5) {{\large $\mir$}};

\begin{scope}[shift={(-2,1.5)}]
\def\radius{0.3}
\draw[hedge] (0,0) circle (\radius);
\draw[hedge] (0,2*\radius) circle (\radius);
\draw[hedge] (0,-2*\radius) circle (\radius);
 
\node[avx]  at (0,\radius){};
\node[bvx]  at (0,-\radius){};
\node[avx]  at (\radius,0){};
\node[bvx]  at (-\radius,0){};
\node[avx]  at (\radius,2*\radius){};
\node[bvx]  at (-\radius,2*\radius){};
\node[avx]  at (\radius,-2*\radius){};
\node[bvx]  at (-\radius,-2*\radius){};
\node at (0,-4*\radius) {$H$};
\end{scope}

\end{tikzpicture}
\caption{A graph $H$ of label homogeneity~$2$}\label{homofig}
\end{figure}

\begin{lemma}\label{seplabelslem}
Let $H$ be a connected labelled graph that has an embedding in the plane in which 
all labelled vertices are on the boundary of the outer face.
If the labelled $H$-expansions have the 
Erd\H os-P\'osa property, then $H$ has label homogeneity at most~$2$.
\end{lemma}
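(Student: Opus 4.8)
The plan is to prove the contrapositive: assuming $H$ has label homogeneity at least $3$, I would construct, for every $R\in\N$, a labelled graph $G_R$ that contains a labelled $H$-expansion, that contains no two disjoint labelled $H$-expansions, and in which every $X\subseteq V(G_R)$ with $G_R-X$ containing no labelled $H$-expansion satisfies $|X|\ge R-|V(H)|$. Since the last quantity tends to infinity with $R$, no function can then be an \EP{} function for the family of labelled $H$-expansions (take $k=2$). I describe the argument for $2$-connected $H$, so that the outer face of the fixed plane embedding is bounded by a cycle $Z$ carrying all labelled vertices of $H$; for merely connected $H$ one runs the same argument with $Z$ replaced by the outer closed walk of $H$, tracking its repeated vertices.

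I would first reformulate the hypothesis. Reading the labels around $Z$ yields a cyclic sequence of labels; let $m$ be its number of maximal monochromatic runs, with run labels $\lambda_1,\dots,\lambda_m$, so that $\lambda_i\ne\lambda_{i+1}$ cyclically. Routing $Z$ along the top row of an elementary $n$-wall shows that the valid top-row labellings for $H$ are exactly the ``unrollings'' of this cyclic sequence, together with superfluous extra labels, and that the least number of simply-labelled subpaths needed over all of these is $m$, attained by cutting the cyclic sequence at a run boundary; thus the label homogeneity of $H$ equals $m$, and the hypothesis says $m\ge 3$. Now let $G_R$ be a sufficiently large grid, say the $mR\times mR$-grid, with boundary cycle $C$, partition $C$ into $m$ consecutive arcs $X_1,\dots,X_m$ each of length at least $R$, label every vertex of $X_i$ with $\lambda_i$, and leave all other vertices unlabelled; the labels of $G_R$ then form exactly the cyclic pattern $(\lambda_1,\dots,\lambda_m)$, blown up.

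That $H\mir G_R$ is clear: embed $H$ in the grid with $Z$ running close to $C$ so that the branch set of each labelled vertex of $Z$ reaches $C$ inside the arc of the matching label (the branch set of a high-degree vertex of $Z$ zig-zags up to $C$ between consecutive incident edges, which makes condition~(ii) of a labelled expansion hold), placing the remaining branch sets inside; there is ample room. Suppose now that $H_1,H_2$ are disjoint labelled $H$-expansions in $G_R$ and let $C_{H_i}$ be the cycle through the branch sets of $Z$ in $H_i$, taking in each branch set the path joining its two edges along $Z$. Fixing the embedding of $G_R$ with $C$ bounding the outer face, each $C_{H_i}$ is a cycle of the closed disc bounded by $C$ which only touches $C$, never crossing it. The crucial claim is that $C_{H_i}$ meets \emph{every} arc $X_1,\dots,X_m$: the branch sets of $Z$ occur in a fixed cyclic order around $C_{H_i}$, so the vertices of $C$ that condition~(ii) forces them to meet occur, read along $C_{H_i}$, in the cyclic label pattern $(\lambda_1,\dots,\lambda_m)$; by planarity these same vertices occur in the same cyclic order along $C$; hence the arcs of $C$ visited, read along $C$ with their labels, form a cyclic subsequence of $(\lambda_1,\dots,\lambda_m)$ that reproduces the whole pattern, which forces it to be the whole sequence, since consecutive entries differ. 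Granting the claim, $C_{H_2}$, being disjoint from the cycle $C_{H_1}$, lies inside a single region of the disc cut off by $C_{H_1}$: if this is the region enclosed by $C_{H_1}$ and disjoint from $C$, then $C_{H_2}$ meets no vertex of $C$ and $H_2$ has no labelled vertex, a contradiction; otherwise it is one of the ``lune'' regions bounded by a sub-arc of $C_{H_1}$ and a sub-arc of $C$ lying between two consecutive points where $C_{H_1}$ meets $C$, and the $C$-part of any such lune meets at most two of $X_1,\dots,X_m$, so $C_{H_2}$ cannot meet all $m\ge 3$ of them. Finally, if $X\subseteq V(G_R)$ has $|X|<R-|V(H)|$, then $X$ leaves more than $|V(H)|$ vertices in each arc $X_i$ and, being far smaller than the grid, does not destroy its capacity to host a constant-size minor, so $G_R-X$ still contains a labelled $H$-expansion; hence every hitting set has size at least $R-|V(H)|$.

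The main obstacle is the italicised claim that every labelled $H$-expansion reaches all $m$ arcs of $C$; this is exactly where the hypothesis $m\ge 3$ and the run structure are used, and its proof rests on the standard but slightly delicate planar fact that a cycle drawn in a disc meets the bounding circle in the same cyclic order along the cycle as along the circle, together with some bookkeeping when $C_{H_i}$ runs along sub-arcs of $C$ rather than touching it at isolated points. The ``lune'' separation needs the same care, and the passage from $2$-connected to general connected $H$ requires replacing the cycle $Z$ (and each $C_{H_i}$) by outer closed walks throughout.
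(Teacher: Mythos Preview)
Your approach differs substantially from the paper's. The overall geometric picture is sound, but there is a gap in the opening reformulation. You assert that the label homogeneity of $H$ equals the number $m$ of monochromatic runs around $Z$, arguing that ``the valid top-row labellings for $H$ are exactly the unrollings of this cyclic sequence''. The upper bound (label homogeneity $\le m$, by unrolling) is immediate, but the lower bound --- that no labelled $H$-expansion in a wall can make do with fewer than $m$ simply-labelled subpaths on the top row --- is precisely the nontrivial topological fact you later call the ``crucial claim'' (that the cyclic order of boundary touches is preserved), and it is not established by saying one routes $Z$ along the top row; for vertices of $Z$ of degree $\ge 3$ the branch sets are trees, and it is not a priori clear they must reach the top row in the cyclic order of $Z$. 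Fortunately your construction only needs $m\ge 3$, and that follows trivially from the hypothesis (if $m\le 2$, unrolling witnesses label homogeneity $\le 2$), so the stronger equality should simply be dropped. Granting $m\ge 3$, your cyclic-order argument for ``$C_{H_i}$ meets every arc'' and the subsequent lune bound are correct, though only sketched.

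The paper sidesteps all of this. It labels only the \emph{top row} of a blown-up grid, using a pattern taken straight from the definition of label homogeneity (so $\ell$ simply-labelled subpaths $Q_1,\dots,Q_\ell$). The key step --- that each labelled $H$-expansion must meet every $Q_j$ --- is then a one-line consequence of the definition: if some expansion missed $Q_j$, the remaining $\ell-1$ subpaths would already witness label homogeneity $\le \ell-1$. For the contradiction the paper adds a single vertex $x$ in the outer face, adjacent to one vertex of each of $Q_1,Q_2,Q_3$; together with the two disjoint connected expansions this yields a $K_{3,3}$-minor in a planar graph. This avoids the cyclic-order and lune topology altogether, and it works uniformly for merely connected $H$, whereas your extension from $2$-connected $H$ to outer closed walks is genuinely more delicate than the one sentence you allot to it.
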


In the proof we will consider two grids, each on a vertex set indexed by a set $[n]\times [n]$,
that is, on a vertex set $\{v_{ij}:(i,j)\in[n]\times [n]\}$. In both cases
we assume that the vertices are chosen in such a way that $v_{ij}$ is adjacent to $v_{i'j'}$
if and only if $i=i'$ and $|j-j'|=1$, or if $|i-i'|=1$ and $j=j'$.
The vertices $v_{jn}$ for $j\in[n]$ are the vertices of the \emph{top row} of the grid.

\begin{proof}[Proof of Lemma~\ref{seplabelslem}]
Suppose that $H$ has label homogeneity $\ell\geq 3$.
Then, there is a labelled $r \times r$ grid $G'$ for some sufficiently large $r$ such that $G'$ contains a labelled
$H$-expansion where all labelled vertices of $G'$ are contained in the top row, 
and such that there are $\ell$ disjoint simply-labelled subpaths $P'_1,\ldots,P'_\ell$ of the top row that cover
all its vertices.
Let $\{v_{ij}: i,j\in[r]\}$ be the vertex set of $G'$.

Suppose that $f$ is an Erd\H os-P\'osa function for labelled $H$-expansions.
We enlarge $G'$ to an $r'\times r'$-grid $G$ for $r'=rs=r\cdot 3(f(2)+1)$
with vertex set $\{w_{ij}:(i,j)\in [r']\times [r']\}$.
We say that $w_{ij}$ has \emph{pre-image} $v_{pq}$ 
if $i-(p-1)s\in [s]$ and $j-(q-1)s\in[s]$. 
We label a vertex $w_{jr'}$ in the top row of $G$ with label $\alpha$
if its pre-image $v_{pr}$ is labelled with $\alpha$ in $G'$.

Let $X$ be a set of at most $f(2)$  vertices in $G$. Let us convince ourselves that $G-X$
still contains a labelled $H$-expansion. For every $q\in[r]$, there is a $j\in[r']$
such that  none of the $(j-1)$th, the $j$th or the $(j+1)$th column meets $X$, and such that
$w_{j-1,r'}$, $w_{jr'}$ and $w_{j+1,r'}$ have $v_{qr}$ as pre-image; let $J$ be the set of these $j$, 
one for each $q\in [r]$. 
In a similar way, there are $r$ rows of $G$, with index set $I$,
that are disjoint from $X$. In particular, the union of the rows with index in $I$ and the columns with index in $J$
define a subgraph $F$ of $G-X$ that contains a subdivision of an $r\times r$-grid. 
Let $i_1$ be the largest integer in $I$.
We modify $F$  
by adding
for every $j\in J$ 
the path $w_{j-1,r'}w_{jr'}w_{j+1,r'}$ together with the three vertical paths from 
these vertices to 
$w_{j-1,i_1}$, $w_{j,i_1}$, and $w_{j+1,i_1}$ respectively. 
Call the obtained graph $F'$ and observe that the labelled grid $G'$ is a labelled minor of $F'$.
Due to Lemma~\ref{lem:transitivity}, $H$ is a labelled minor of $F'$.
Since $F'$ is disjoint from $X$, we see that no set of at most $f(2)$  vertices 
meets every labelled $H$-expansion. 

Therefore, 
$G$ must contain two disjoint labelled $H$-expansions, $H_1$ and $H_2$ say. 
By construction, the vertices of the top row of $G$ can be covered by $\ell$ disjoint simply-labelled
paths $Q_1,\ldots, Q_\ell$. By definition of the label homogeneity, each of the two 
the $H$-expansions  needs 
to contain at least one vertex from each of the paths $Q_1,\ldots, Q_\ell$.
Let $C_G$ be the boundary of the outer face of $G$. 

Starting with the plane graph $C_G\cup H_1\cup H_2$ we add 
a vertex $x$ drawn in the outer face of $C_G$ and make it 
adjacent to a vertex from each of $Q_1,Q_2,Q_3$. (Recall that $\ell\geq 3$.)
The resulting graph $K$ is planar. 
On the other hand, we see that $K$ has a $K_{3,3}$-minor by contracting 
each of $Q_1,Q_2,Q_3,H_1-(Q_1\cup Q_2 \cup Q_3),H_2-(Q_1\cup Q_2 \cup Q_3)$ to a single vertex, a contradiction.
This completes the proof.
\end{proof}

In Lemma~\ref{2homolem} we give a characterisation of the labelled graphs of label homogeneity at most~$2$.
To simplify its proof we use the following definition together with Lemma~\ref{conveniencelem}.
For a positive integer $h$ we define a 
graph $W(h)$ as follows. Start with an elementary $2h^2$-wall $W_1$, and let $n_1,\ldots,n_{2h^2}$
be the set of nails (in the order they appear in the top horizontal path).
We add to $W_1$ a set of $2h$ further vertices $a_1,\ldots,a_{h}$, $b_1,\ldots,b_h$,
and for each $i\in [h]$ we make   $a_i$ adjacent to each of $n_{(i-1)h+1},\ldots, n_{ih}$,
while we make $b_i$ adjacent to each of $n_{h^2+(i-1)h+1},\ldots,n_{h^2+ih}$.
The graph $W(h)$ is \emph{$(\alpha,\beta)$-labelled} if each vertex $a_1,\ldots,a_{h}$
is labelled with $\alpha$ and each of $b_1,\ldots,b_{h}$ is labelled with $\beta$.

\begin{figure}[ht]
\centering
\begin{tikzpicture}

\def\wallheight{18}
\def\brickheight{0.25}
\def\bh{\brickheight}

\tikzstyle{hvertex}=[thick,circle,inner sep=0.cm, minimum size=1.5mm, fill=white, draw=black]
\tikzstyle{avx}=[hvertex,fill=black]
\tikzstyle{bvx}=[hvertex,fill=dunkelgrau]
\tikzstyle{wed}=[draw,very thick,color=dunkelgrau]
\tikzstyle{ed}=[draw,very thick]

\pgfmathtruncatemacro{\lastrow}{\wallheight}
\pgfmathtruncatemacro{\penultimaterow}{\wallheight-1}
\pgfmathtruncatemacro{\lastrowshift}{mod(\wallheight,2)}
\pgfmathtruncatemacro{\lastx}{2*\wallheight+1}

\clip[] (-0.5,3.1) rectangle (\wallheight*2*\brickheight+\brickheight+0.5,5.3);

\draw[wed] (\brickheight,0) -- (2*\wallheight*\brickheight+\brickheight,0);
\foreach \i in {1,...,\penultimaterow}{
  \draw[wed] (0,\i*\brickheight) -- (2*\wallheight*\brickheight+\brickheight,\i*\brickheight);
}
\draw[wed] (\lastrowshift*\brickheight,\lastrow*\brickheight) to ++(2*\wallheight*\brickheight,0);

\foreach \j in {0,2,...,\penultimaterow}{
  \foreach \i in {0,...,\wallheight}{
    \draw[wed] (2*\i*\brickheight+\brickheight,\j*\brickheight) to ++(0,\brickheight);
  }
}
\foreach \j in {1,3,...,\penultimaterow}{
  \foreach \i in {0,...,\wallheight}{
    \draw[wed] (2*\i*\brickheight,\j*\brickheight) to ++(0,\brickheight);
  }
}

\foreach \i in {1,...,\lastx}{
  \coordinate (w\i w0) at (\i*\brickheight,0);
}
\foreach \j in {1,...,\penultimaterow}{
  \foreach \i in {0,...,\lastx}{
    \coordinate (w\i w\j) at (\i*\brickheight,\j*\brickheight);
  }
}
\foreach \i in {1,...,\lastx}{
  \coordinate (w\i w\lastrow) at (\i*\brickheight+\lastrowshift*\brickheight-\brickheight,\lastrow*\brickheight);
}

\foreach \i in {1,2,3}{
  \node[avx,label=left:$a_\i$] (a\i) at (6*\i*\brickheight-3*\brickheight+\lastrowshift*\brickheight,\lastrow*\brickheight+2*\brickheight) {};
  \node[bvx,label=right:$b_\i$] (b\i) at (6*\i*\brickheight+15*\brickheight+\lastrowshift*\brickheight,\lastrow*\brickheight+2*\brickheight) {};
  \foreach \j in {1,2,3}{
    \draw[ed] (a\i) to (6*\i*\brickheight+2*\j*\brickheight+\lastrowshift*\brickheight-7*\brickheight,\lastrow*\brickheight);
    \draw[ed] (b\i) to (6*\i*\brickheight+2*\j*\brickheight+\lastrowshift*\brickheight+11*\brickheight,\lastrow*\brickheight);
  }
}

\end{tikzpicture}
\caption{The graph $W(3)$}\label{Whfig}
\end{figure}
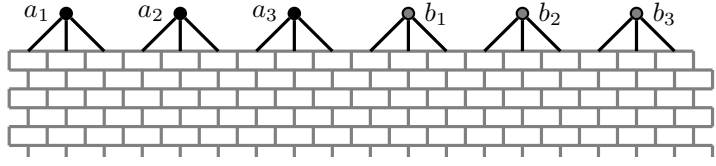

\begin{lemma}\label{conveniencelem}
Let $H$ be 
a labelled graph. 
Then $H$ has label homogeneity at most~$2$ if and only if 
$H$ is labelled with at most two labels, say $\alpha$ and $\beta$, and
there is an $h$
such that $H$ is a labelled minor of the $(\alpha,\beta)$-labelled graph $W(h)$.
\end{lemma}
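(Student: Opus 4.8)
The plan is to obtain both implications from the transitivity of $\mir$ (Lemma~\ref{lem:transitivity}), after two preparations. First, in any labelled $H$-expansion $(X,\pi)$ every label of $H$ occurs inside some branch set $\pi(v)$, provided $H$ has no isolated labelled vertex — which I assume throughout (an isolated labelled vertex is irrelevant to everything below): a labelled vertex $v$ is then non-isolated, so $T^\pi_v$ has a non-trivial leaf-to-leaf path, and condition~(ii) forces a vertex of $\pi(v)$ carrying $v$'s label onto it. Second, the class of planar labelled graphs admitting a plane embedding with all labelled vertices on the outer face is closed under $\mir$: contracting the branch sets of a labelled expansion inside a plane embedding of the host yields such an embedding of the minor, since a branch set meeting a labelled vertex touches the outer-face boundary and hence so does its image. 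As $W(h)$ plainly admits such an embedding (draw the $2h^2$-wall in a disc with its top row on the boundary and the vertices $a_i,b_i$ outside), so does every $H$ with $H\mir W(h)$, and the label homogeneity of such an $H$ is defined.

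\emph{The ``if'' direction.} Assume $H$ is $(\alpha,\beta)$-labelled and $H\mir W(h)$. Fix any sufficiently large $n$; I will produce a labelling of the top row $P$ of the elementary $n$-wall $W$ with $H\mir W$ and with the labelled vertices of $P$ covered by two simply-labelled subpaths, which is exactly what label homogeneity at most~$2$ asks for. Label $P$ so that it contains $h$ consecutive disjoint segments $S_1,\dots,S_h$ followed by $h$ consecutive disjoint segments $S'_1,\dots,S'_h$, where each $S_i$ reads $x_1\,t_1\,x_2\,t_2\cdots t_{h-1}\,x_h$ with the $x_j$ top-row vertices carrying a downward edge and the $t_j$ labelled~$\alpha$, and $S'_i$ is the analogue with $\beta$; for large $n$ this fits, and $\bigcup_i S_i$, $\bigcup_i S'_i$ are the two simply-labelled subpaths. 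By transitivity it remains to show $W(h)\mir W$. Put $\pi(a_i)=S_i$ and map the $h$ edges at $a_i$ to the downward edges at $x_1,\dots,x_h$ (and $b_i$, $S'_i$ likewise); realise the $2h^2$-wall $W_1\subseteq W(h)$ as a minor of $W$ minus its top row, with the $j$-th nail of $a_i$'s block anchored at the $W$-vertex directly below the $x_j$ of $S_i$ — possible because $W$ minus its top row is still a huge wall while $W_1$ has fixed size. Then in $T^\pi_{a_i}=S_i\cup\{\text{the }h\text{ downward edges}\}$ every leaf-to-leaf path runs along $S_i$, hence through some $t_j\in\pi(a_i)$ labelled~$\alpha$, so condition~(ii) holds (and likewise for $b_i$). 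Thus $W(h)\mir W$, so $H\mir W$, and since $n$ was arbitrary, $H$ has label homogeneity at most~$2$.

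\emph{The ``only if'' direction.} Assume $H$ has label homogeneity at most~$2$, witnessed for some large $n$ by a labelling of the top row $P$ of the elementary $n$-wall $W$ with $H\mir W$ and the labelled vertices of $P$ covered by two simply-labelled subpaths with single labels $\alpha$, $\beta$. Since all labels of $W$ lie on $P$ and every label of $H$ occurs in a branch set of the $H$-expansion in $W$, the graph $H$ carries no label besides $\alpha,\beta$; padding with an unused label if necessary, it is $(\alpha,\beta)$-labelled. By transitivity it now suffices to find $h$ with $W\mir W(h)$. Let $v_1,\dots,v_k$ and $w_1,\dots,w_m$ be the $\alpha$- resp.\ $\beta$-labelled vertices of $P$; each has degree at most~$3$ in $W$. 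Pick $h$ large in terms of $n$, realise the wall obtained from $W$ by deleting $v_1,\dots,v_k,w_1,\dots,w_m$ as a minor deep inside $W_1\subseteq W(h)$ avoiding all nails of $W_1$, and for each $j$ set $\pi(v_j)=\{a_j\}$ together with $d_W(v_j)$ internally disjoint paths leaving $a_j$ through $d_W(v_j)$ distinct nails of $a_j$'s block and running through $W_1$ to the points where the $d_W(v_j)$ edges at $v_j$ must attach (and $w_j$, $b_j$ likewise). Then $\pi(v_j)$ is a tree in which $a_j$ separates all leaves of $T^\pi_{v_j}$, so every leaf-to-leaf path meets the $\alpha$-labelled $a_j$ and condition~(ii) holds; the boundedly many paths involved can be routed disjointly in the huge wall $W_1$, and the unused nails of the $a_j,b_j$ are left out of the expansion so that no unwanted adjacencies arise. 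Hence $W\mir W(h)$, so $H\mir W(h)$.

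\emph{Main obstacle.} Everything is routine except the two minor embeddings, and within them the one genuine point is condition~(ii) for the labelled vertices of large degree — the $a_i,b_i$ (degree $h$ in $W(h)$) in the ``if'' direction and the labelled wall vertices $v_j,w_j$ in the ``only if'' direction. The device that makes it work is to shape the relevant branch set so that the label-carrying vertex is either a cut vertex separating \emph{all} leaves of the branch-set tree (the spider $\pi(v_j)$ centred at $a_j$), or a forced waypoint on a path that strings all the leaves together (the path $\pi(a_i)=S_i$, with its $\alpha$-vertices sitting between consecutive edge-attachment points). Once that is set up, realising a fixed wall as a minor of a much larger one and routing boundedly many pairwise disjoint paths through it is entirely standard, and I would present it only in outline.
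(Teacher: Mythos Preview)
Your proof is correct and follows the same overall strategy as the paper: both directions are obtained from transitivity of $\mir$ by exhibiting labelled-minor embeddings between $W(h)$ and suitably labelled elementary walls. Your ``if'' direction is essentially identical to the paper's (path-shaped branch sets for the $a_i,b_i$ along the top row, with the label-carrying vertices sitting between consecutive attachment points).

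Your ``only if'' direction differs in execution, and is in fact more explicit than the paper's. The paper first asserts without argument that label homogeneity at most~$2$ yields $H\mir W'$ for a \emph{standardised} wall $W'$ (first $t$ nails labelled~$\alpha$, last $t$ labelled~$\beta$), and then observes that $W'\mir W(t)$ is obvious. You instead take the raw witness wall $W$ from the definition and embed it directly into $W(h)$ via spider-shaped branch sets centred at the $a_j,b_j$. This buys you completeness---you actually verify condition~(ii) and do not rely on the reader to supply the ``standardisation'' step---at the cost of a slightly heavier routing argument. Both are valid; the paper's version is terser, yours fills in a step the paper leaves implicit.

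One small remark: your caveat about isolated labelled vertices is well-placed. The lemma as stated is indeed sensitive to this edge case (an isolated vertex with a third label~$\gamma$ would break the ``only if'' direction as written), and the paper silently relies on the eventual $2$-connectedness of~$H$ to sidestep it.
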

\begin{proof}
One direction is easy: if $H$ has label homogeneity at most~$2$ then
it must be labelled with at most two labels, $\alpha$ and $\beta$, say, and there is a $t$
such that $H$ is a labelled minor of the elementary $2t$-wall $W'$ in which 
the first $t$ nails are labelled with $\alpha$ and the other $t$ nails with $\beta$.
As obviously $W'\mir W(t)$ it follows that also $H\mir W(t)$.

For the other direction, let $h$ be such that $H \mir W(h)$. 
Let $W_0$ be an elementary $(2h^2+2)$-wall, and let
$n_0,\ldots, n_{2h^2+1}$ be its nails (in the order as they appear in the top row). 
Label the nails $n_0,\ldots, n_{h^2}$
 with $\alpha$,
and label the other nails with $\beta$. 
We claim that $W(h)\mir W_0$.

To see this, denote by $Q$ the top row of $W_0$, and denote by 
$n^-_i$ the predecessor of $n_i$ on $Q$ for each $i$. We define branch sets $A_j$, $B_j$
for $j\in[h]$ as follows. Set $A_j=n^-_{(j-1)h+1}Qn_{jh-1}$ and 
$B_j=n^-_{(j-1)h+h^2+1}Qn_{jh+h^2-1}$. 
Taking in $W_0$ the  sets $A_j,B_j$ as branch sets, as well as all the vertices 
in $W_0-Q$ as singleton branch sets, 
we obtain a labelled $W(h)$-expansion, which means that $W(h)$, 
and thus also $H$, is a labelled minor of $W_0$. As the labels of $W_0$ 
can be covered by two simply-labelled subpaths of $Q$, it follows that $H$ has label homogeneity at most~$2$.
\end{proof}

\begin{lemma}\label{2homolem}
Let $H$ be a $2$-connected graph.
Then $H$ has label homogeneity at most~$2$
if and only if there is an embedding of $H$ in the plane such that the boundary $C$ of the outer 
face contains all labelled vertices, 
and there are two internally disjoint subpaths $P,Q\subseteq C$
that together cover all of~$V(C)$
and we can associate a label $\alpha$ with $P$ and a label $\beta$ with $Q$
such that
\begin{itemize}
	\item $P-Q$ is simply-labelled with $\alpha$ and 
	$Q-P$ is simply-labelled with $\beta$; and
	\item for all $v\in V(P\cap Q)$, we have $d_H(v)=2$ and $v$ is labelled with $\{\alpha,\beta\}$.
\end{itemize}
\end{lemma}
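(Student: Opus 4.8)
The plan is to prove the two directions separately, using Lemma~\ref{conveniencelem} to reformulate ``label homogeneity at most~$2$'' as ``$H$ is labelled with at most two labels $\alpha,\beta$ and $H\mir W(h)$ for some~$h$''. For the \emph{if} direction, I would start from the embedding of $H$ with outer boundary $C=P\cup Q$ as in the statement, and exhibit a labelled $W(h)$-expansion inside some large copy of $H$; more precisely I would show directly that $H$ is a labelled minor of the $(\alpha,\beta)$-labelled graph $W(h)$ for $h$ large compared to $|V(H)|$. To do this I would take an unlabelled $H$-expansion inside $W(h)$ that respects the cyclic structure: route the boundary cycle $C$ of $H$ around the top row so that the subpath $P$ lands on the nails attached to the $a_i$'s and $Q$ lands on the nails attached to the $b_i$'s, and route the interior of $H$ through the body of the wall. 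Because $P-Q$ carries only the label $\alpha$ and every $a_i$ is labelled $\alpha$, each branch set of a vertex of $P-Q$ can be made to contain an $\alpha$-labelled vertex; symmetrically for $Q-P$ and $\beta$; and for a vertex $v\in P\cap Q$ we have $d_H(v)=2$, so its branch set needs to pick up \emph{both} an $\alpha$- and a $\beta$-vertex on a single path, which is possible by putting $v$'s branch set onto a nail adjacent to both an $a_i$ and a $b_j$ — here the factor-$2$ slack in the $2h^2$-wall of $W(h)$ (or the $(2h^2+2)$-wall in the proof of Lemma~\ref{conveniencelem}) and the $d_H(v)=2$ condition are exactly what make the leaf-to-leaf path condition~(ii) in the definition of a labelled expansion hold. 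Transitivity (Lemma~\ref{lem:transitivity}) then gives label homogeneity at most~$2$.

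For the \emph{only if} direction, suppose $H$ is $2$-connected with label homogeneity at most~$2$, so by Lemma~\ref{conveniencelem} we have $H\mir W(h)$ with at most two labels $\alpha,\beta$; fix a minimal labelled $W(h)$-expansion $(W(h),\pi)$ with image a labelled $H$-expansion $X\subseteq W(h)$. I would first handle the planar embedding: since $X$ is a subgraph of the planar graph $W(h)$, it inherits a plane embedding, and this embedding pulls back (contracting branch sets) to a plane embedding of $H$. Because all labels of $W(h)$ sit on the vertices $a_i,b_i$, which lie on the outer face of $W(h)$, the labelled vertices of $X$ lie on the outer boundary, hence the labelled vertices of $H$ lie on the outer boundary $C$ of this induced embedding. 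The core of the argument is then to analyse \emph{which} vertices of $H$ can be labelled and how: a vertex $v$ of $H$ labelled with $\gamma\in\{\alpha,\beta\}$ forces, via condition~(ii), that on \emph{every} leaf-to-leaf path of $T^\pi_v$ there is a $\gamma$-vertex, i.e.\ a vertex whose branch set was routed onto an $a_i$ (if $\gamma=\alpha$) or a $b_i$ (if $\gamma=\beta$); since the $a_i$'s occupy one arc of the outer cycle of $W(h)$ and the $b_i$'s a disjoint arc, and since $H$ is $2$-connected so that $C$ is a single cycle and $d_H(v)\ge 2$ for all $v$, I would trace how these arcs project onto $C$. I expect that the $\alpha$-labelled vertices of $H$ all lie in the part of $C$ coming from the $a$-arc, giving a subpath $P$, the $\beta$-labelled ones in a subpath $Q$ coming from the $b$-arc, that $P$ and $Q$ together cover $V(C)$, that $P-Q$ is simply $\alpha$-labelled and $Q-P$ simply $\beta$-labelled, and that a vertex in $P\cap Q$ must be labelled both $\alpha$ and $\beta$. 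The condition $d_H(v)=2$ for $v\in P\cap Q$ is the delicate point: if such a $v$ had a third neighbour, then $T^\pi_v$ would have three leaves and hence a leaf-to-leaf path avoiding either the $a$-side or the $b$-side of the tree (the branch set is a subtree of $W(h)$ and the $a$- and $b$-arcs are far apart), contradicting that this path must contain both an $\alpha$- and a $\beta$-vertex; minimality of the expansion is what lets us control the leaves of $T^\pi_v$.

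The main obstacle, and the step I would spend the most care on, is the \emph{only if} direction's structural analysis: turning the geometric separation of the $a$-nails from the $b$-nails in $W(h)$ into the clean combinatorial statement that $C = P\cup Q$ with $P,Q$ internally disjoint and the stated label pattern, while simultaneously extracting $d_H(v)=2$ for the overlap vertices. This requires keeping track of the plane embedding of $X$ inside $W(h)$, using $2$-connectedness of $H$ to know that $C$ is a cycle whose vertices all have degree at least two, and using minimality of the expansion to bound the leaves of each branch tree $T^\pi_v$; the interplay of ``a subtree of the wall cannot cheaply reach both the far-apart $a$-arc and $b$-arc along a single leaf-to-leaf path unless its root vertex has degree exactly two in $H$'' is the crux. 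The \emph{if} direction, by contrast, is mostly an explicit construction plus an application of Lemma~\ref{conveniencelem} and Lemma~\ref{lem:transitivity}, and I expect it to be routine once the routing of $C$ around the top row is set up correctly.
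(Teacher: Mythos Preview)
Your plan is essentially the paper's argument and is sound, with one small slip and one worthwhile simplification to point out.

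The slip is in the \emph{if} direction: in $W(h)$ no nail is adjacent to both an $a_i$ and a $b_j$ (the first $h^2$ nails see only $a$'s, the last $h^2$ only $b$'s), so you cannot realise a vertex $v\in P\cap Q$ by ``a nail adjacent to both''. What you actually need is that $d_H(v)=2$ makes $T^\pi_v$ a single path, so its unique leaf-to-leaf path can be routed from an $a_i$ across the transition to a $b_j$; that is, the branch set of $v$ is a longer subpath straddling the $a$-arc/$b$-arc boundary. This is easy to arrange, and the paper's proof of this direction is in fact just one sentence plus Lemma~\ref{conveniencelem}.

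For the \emph{only if} direction, the paper does not go through $W(h)$ but uses the definition of label homogeneity directly: it takes a minimal labelled $H$-expansion in an elementary $2h$-wall whose first $h$ nails carry $\alpha$ and whose remaining $h$ nails carry $\beta$. This is cleaner than working in $W(h)$ because the labelled vertices are then the degree-$2$ nails on the top row, so (by planarity) the nails on the outer cycle $C'$ of the expansion appear in increasing linear order as you traverse $C'$. From this the paper reads off that at most two branch sets can contain both an $\alpha$-nail and a $\beta$-nail (they must contain the $\alpha/\beta$ transition), and the degree argument becomes local: if such a $u$ had $d_H(u)\ge 3$, then $T^\pi_u$ has a degree-$3$ vertex on $C'$, and wherever it sits relative to the transition, one of the resulting leaf-to-leaf paths misses all $\alpha$-nails or all $\beta$-nails. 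Your version through $W(h)$ would work too, but the bookkeeping of the $a_i,b_j$ versus the wall's outer cycle is heavier; you may find it simpler to follow the paper and work in the labelled wall.
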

\begin{proof}
If $H$ has an embedding in the plane as stated above, there is an $h$ such that $H$ 
is a labelled minor of $W(h)$. By Lemma~\ref{conveniencelem}, 
it follows that $H$ has label homogeneity at most~$2$. 

If, on the other hand, $H$ has label homogeneity at most~$2$, then
there is an $h$ such that $H$ is a labelled minor of an elementary $2h$-wall $W$,
in which the first $h$ nails are labelled with $\alpha$ and the other $h$ nails are labelled 
with $\beta$. Let $(H',\pi)$ be a minimal labelled $H$-expansion in $W$.

If either $\alpha$ or $\beta$ are not used at vertices of $H$,
the statement of the lemma clearly holds,
as any labelled minor of $W$ has all its labels on the boundary of the same face.

We may, therefore, assume that some vertex in $H$ is labelled with $\alpha$ and some vertex
is labelled with $\beta$.
By contracting the branch sets of $H'$, we obtain a planar embedding of $H$.
Let $C'$ be the boundary of the outer face of $H'$, and let $C$ be the boundary of the outer face of the embedding of $H$.
Since $H$ is 2-connected, $C$ is a cycle
and since $H'$ is a minimal $H$-expansion,
$C'$ is also a cycle.
In fact, $C$ is obtained from $C'$ by contracting all branch sets.
As all labelled vertices of $W$ are contained in the top row and $C'$ is the boundary of the outer face, every labelled vertex of $H'$ must be on $C'$.
Hence $H$ has an embedding in the plane such that the boundary $C$ of the outer face contains all labelled vertices.

Consider the nails of $W$ ordered from left to right, say $n_1,\ldots,n_{2h}$, and let $n_i$ be the leftmost nail contained in $C'$.
Following $C'$ in clockwise fashion we obtain a sequence $(n_i=n_{i_1},n_{i_2},\ldots,n_{i_r})$ of all labelled vertices on $C'$.
Due to planarity, we have that $i_{j} < i_{j+1}$ for each $j\in [r-1]$.
By definition of $W$, there is some $j \in [r]$ such that $n_{i_j}$ is the rightmost nail labelled $\alpha$.

We observe that there are at most two vertices in $H$ that are labelled with 
$\{\alpha,\beta\}$
because every branch set of such a vertex must contain either $\{n_{i_j},n_{i_{j+1}}\}$
or $\{n_{i_r},n_{i_1}\}$.
Suppose $u\in V(H)$ is labelled with $\{\alpha,\beta\}$ and it contains both $n_{i_j}$ and $n_{i_{j+1}}$ (the argument for $n_{i_1}$ and $n_{i_r}$ is similar).
For a contradiction, assume that $d_H(u)\geq 3$.
Note that $T^\pi_u$ contains a vertex $x$ of degree at least $3$ on $C'$.
Observe that $x$ can be neither inside nor outside $n_{i_j}C'n_{i_{j+1}}$
as in both cases there is a leaf-to-leaf path in $T^\pi_u$ that either contains no vertex labelled $\alpha$ or 
no vertex labelled $\beta$.

Now it is not hard to construct the paths $P$ and $Q$ as in the statement.
\end{proof}

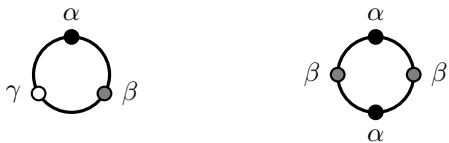
\begin{figure}[ht]
\centering
\begin{tikzpicture}
\def\radius{0.5}
\tikzstyle{hvertex}=[thick,circle,inner sep=0.cm, minimum size=1.8mm, fill=white, draw=black]
\tikzstyle{avx}=[hvertex,fill=black]
\tikzstyle{bvx}=[hvertex,fill=dunkelgrau]
\tikzstyle{cvx}=[hvertex,fill=white]

\draw[hedge] (0,0) circle (\radius);
\node[avx,label=above:$\alpha$] at (0,\radius){};
\node[bvx,label=right:$\beta$] at (-30:\radius){};
\node[cvx,label=left:$\gamma$] at (210:\radius){};

\begin{scope}[shift={(4,0)}]
\draw[hedge] (0,0) circle (\radius);
\node[avx,label=above:$\alpha$] at (0,\radius){};
\node[avx,label=below:$\alpha$] at (0,-\radius){};
\node[bvx,label=right:$\beta$] at (\radius,0){};
\node[bvx,label=left:$\beta$] at (-\radius,0){};
\end{scope}

\end{tikzpicture}
\caption{Two graphs of label homogeneity larger than~$2$}\label{homo2fig}
\end{figure}

With Lemma~\ref{2homolem} we can see that neither of the graphs in Figure~\ref{homo2fig}
has label homogeneity at most~$2$, which in light of the other results in this
section means that the expansions of neither of the graphs have the Erd\H os-P\'osa property.

\section{\EPP\ for $2$-connected $H$}

In this section, we prove a slightly stronger version of our main result, Theorem~\ref{thm:2consimple}.

\begin{theorem}\label{thm:2con}
Let $H$ be a labelled $2$-connected graph.
Then the labelled $H$-expansions 
have the \EPP{} if and only if there is an embedding of $H$ in the plane such that the boundary $C$ of the outer 
face contains all labelled vertices, and there are two internally disjoint subpaths $P,Q\subseteq C$
that cover all of~$V(C)$
and we can associate a label $\alpha$ with $P$ and a label $\beta$ with $Q$
such that
\begin{itemize}
	\item $P-Q$ is simply-labelled with $\alpha$ and 
	$Q-P$ is simply-labelled with $\beta$; and
	\item for all $v\in V(P\cap Q)$, we have $d_H(v)=2$ and $v$ is labelled with $\{\alpha,\beta\}$.
\end{itemize}
\end{theorem}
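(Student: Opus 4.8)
The ``only if'' direction is already essentially in hand: Lemma~\ref{outerlem} gives a plane embedding with all labelled vertices on the outer face; Lemma~\ref{seplabelslem} (applicable since a $2$-connected graph is connected) forces label homogeneity at most~$2$; and Lemma~\ref{2homolem} translates ``label homogeneity at most~$2$ for $2$-connected $H$'' into exactly the embedding condition in the statement. So the bulk of the work is the ``if'' direction: assuming $H$ has an embedding with the stated two-path structure (equivalently, by Lemmas~\ref{conveniencelem} and~\ref{2homolem}, that $H$ is an $(\alpha,\beta)$-labelled minor of $W(h)$ for some $h$, with at most two labels $\alpha,\beta$), we must produce an \EP{} function $f$ for the labelled $H$-expansions.

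The approach is the standard minimal-counterexample-plus-tangle machinery, run through the labelled flat-wall theorem (Theorem~\ref{thm:simpleflatwall}). First I would fix $k$, suppose $(G,k)$ is a counterexample to a to-be-chosen $f$ with $k$ minimal, and set $t$ appropriately (of order depending on $h$, $k$) so that Lemma~\ref{largetanglelem} yields a tangle $\cT$ of large order witnessing where the labelled $H$-expansions concentrate. Apply Theorem~\ref{lemma: tangle wall} to get a huge wall $W$ with $\cT_W$ a truncation of $\cT$, then feed $W$ into Theorem~\ref{thm:simpleflatwall} with the two labels $\alpha,\beta$ (if $H$ uses only one label, or none, the argument only simplifies --- in fact the one-label case is essentially Kakimura--Kawarabayashi--Marx and can be quoted or redone identically). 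Now case-split on the flat-wall output. In case~\ref{case1} we get an $(\alpha,\beta)$-thoroughly labelled $K_t$-expansion; since $H\mir W(h)$ and $W(h)$ is itself (for $t$ large relative to $h$) a labelled minor of such a thoroughly-labelled clique expansion, we directly extract one labelled $H$-expansion inside the $\cT_\pi$-large block, and because $\cT_\pi$ is a truncation of $\cT$ we can delete a bounded set, decrease $k$ by one, and iterate --- the key point being that after removing $\le$ (bounded) vertices from the $\cT$-small side we still have a counterexample for $k-1$, contradicting minimality. In case~\ref{case3}, for some $\gamma\in\{\alpha,\beta\}$ a bounded set $Z$ destroys all $\gamma$-labels in the $\cT_W$-large block; but every labelled $H$-expansion must (by the leaf-to-leaf path condition, since some vertex of $H$ carries label $\gamma$) contain a $\gamma$-labelled vertex, and a counting/tangle argument confined to the large block shows all of them --- except boundedly many --- pass through the block, so $Z$ plus a bounded correction is a hitting set, again a contradiction. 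The heart is case~\ref{case2}: a $100t$-wall $W_0$ that is $(\alpha,\beta)$-thoroughly labelled, or $\gamma$-thoroughly labelled with a clean linkage, or carries a pair of $(\alpha,\beta)$-clean linkages. In each sub-case the combinatorial content is the same: from a thoroughly-$\alpha$-labelled wall one reads off, inside a subwall, a copy of the elementary wall with alternating $\alpha/\beta$ nail-labelling (or, using the clean linkage, one routes the linkage paths to supply the missing label along prescribed nails), hence a labelled minor of $W(h)$, hence a labelled $H$-expansion; and the linkage purity/cleanness conditions are exactly what is needed to do this while keeping the expansion inside the $\cT_{W_0}$-large region so the iteration step goes through.

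The engineering to make each sub-case yield an $H$-expansion confined to the large block is, I expect, the main obstacle --- in particular extracting a $W(h)$-minor (with the $\alpha$-nails on one side, the $\beta$-nails on the other, and the doubly-labelled degree-$2$ vertices placed correctly) from a thoroughly-labelled wall or from a wall-plus-clean-linkage, while respecting the tangle so that the removed set is genuinely $\cT$-small. The nested-linkage and crossing-linkage cases of~\ref{case2b} and~\ref{case2c} will each need a slightly different routing argument (a nested linkage gives concentric bands of the correct label; a crossing or in-series linkage requires rerouting through bricks), and verifying that $|\cP|=|\cQ|$ together with the interval condition on nails in the definition of an $(\alpha,\beta)$-clean pair lets one interleave the two labels along a single subpath is the delicate point. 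Once these geometric lemmas are in place, the outer induction on $k$ is routine: choose $f(k)$ growing fast enough (compared with $f(k-1)$, and with $T(\cdot)$ and $t'(\cdot)$ from Theorems~\ref{lemma: tangle wall} and~\ref{thm:simpleflatwall}) that the hypotheses of Lemma~\ref{largetanglelem} hold and that all the ``bounded'' deletions above fit inside the slack $f(k)-f(k-1)$, and the contradiction with minimality closes the proof.
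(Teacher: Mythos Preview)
Your necessity direction is exactly right and matches the paper. Your high-level framework for sufficiency --- minimal counterexample, Lemma~\ref{largetanglelem} for a large tangle, Theorem~\ref{lemma: tangle wall} for a wall, then Theorem~\ref{thm:simpleflatwall} for the case split --- is also the paper's route. But the way you propose to close cases~\ref{case1} and~\ref{case2} has a genuine gap.

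You write that in case~\ref{case1} you will ``directly extract one labelled $H$-expansion \ldots\ delete a bounded set, decrease $k$ by one, and iterate'', contradicting minimality of $k$. This does not work: a labelled $H$-expansion in $G$ can have arbitrarily many vertices (branch sets and subdivided edges are unbounded), so deleting it is not deleting a bounded set. If you run the standard dichotomy --- either $G-V(H_1)$ contains $k-1$ disjoint expansions (then done) or, by minimality of $k$, it has a hitting set $X$ of size at most $f(k-1)$ --- then $X\cup V(H_1)$ hits everything in $G$, but its size is $f(k-1)+|V(H_1)|$, which you cannot bound by $f(k)$. The same defect appears in your treatment of case~\ref{case2}. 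The paper avoids this entirely: it chooses the clique/wall large enough \emph{in terms of $k$} that the structure itself already contains $k$ pairwise disjoint labelled $H$-expansions (Lemmas~\ref{caseilem} and~\ref{caseiiilem}), contradicting the counterexample in one stroke. No iteration on $k$ occurs inside cases~\ref{case1} and~\ref{case2}.

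Two smaller points. In case~\ref{case3} the decisive fact is that $H$ is $2$-connected, so any labelled $H$-expansion in $G-Z$ lies in a single block; since it needs a $\gamma$-labelled vertex that block cannot be the $\cT_W$-large one, and then a separation of order $\le |Z|+1$ puts an $H$-expansion on the $\cT$-small side, violating~\ref{T2} directly --- not via a ``counting'' argument. And for case~\ref{case2} you do not need separate routings for nested and crossing linkages: Lemma~\ref{serieslem} converts any pure linkage to one in series by rerouting through the top bricks, and Lemma~\ref{linkredlem} then normalises all of~\ref{case2a}--\ref{case2c} to the single configuration ``wall with an in-series $(\alpha,\beta)$-clean pair'', from which Lemma~\ref{caseiiiclem} carves out $k$ disjoint labelled copies of $W(h)$ at once.
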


Note that Theorem~\ref{thm:2con} clearly implies Theorem~\ref{thm:2consimple}.
The proof closely follows the different outcomes of Theorem~\ref{thm:simpleflatwall}.

For two labels $\alpha,\beta$ we write $K_n^{\alpha,\beta}$
for  the complete graph on $n$ vertices in which every vertex is labelled with~$\{\alpha,\beta\}$.

\begin{lemma}\label{lem:rootedKn}
Let $t\geq 3$, and let $\alpha,\beta$ be labels.
Then every $(\alpha,\beta)$-thoroughly labelled $K_{6t^2-5t}$-expansion contains a 
labelled $K_t^{\alpha,\beta}$-expansion.
\end{lemma}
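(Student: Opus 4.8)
The plan is to extract the desired $K_t^{\alpha,\beta}$-expansion from a large $(\alpha,\beta)$-thoroughly labelled $K_N$-expansion (with $N=6t^2-5t$) by choosing the branch vertices of the target $K_t$ to be $t$ carefully selected branch sets of the source expansion, while the ``connecting'' vertices that guarantee the labels will be supplied by a pool of further source branch sets, one dedicated to each edge of $K_t$. Concretely, write $\pi$ for the given $K_N$-expansion. An edge $xy$ of the target $K_t^{\alpha,\beta}$ will be routed through $\pi(a)$ (for the source branch vertex $a$ assigned to $x$), then across to some intermediate source branch set $\pi(c_{xy})$, and then across to $\pi(b)$ (assigned to $y$); the point of detouring through $\pi(c_{xy})$ is that the thoroughly-labelled condition, applied to the triple $a,b,c_{xy}$ in $K_N$, produces — for each of $\gamma\in\{\alpha,\beta\}$ — a choice of edges $e_{ab},e_{ac_{xy}},e_{bc_{xy}}\in\pi(\cdot)$ whose union with $\pi(a)\cup\pi(b)\cup\pi(c_{xy})$ carries a vertex labelled $\gamma$. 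I would like to merge $\pi(a)\cup e_{ac_{xy}}\cup\pi(c_{xy})\cup e_{bc_{xy}}\cup\pi(b)$ into the branch-set-plus-edge structure realising the edge $xy$, so that the required $\alpha$-vertex and $\beta$-vertex both sit on the $\pi(x)$-to-$\pi(y)$ connection.

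The first concrete step is bookkeeping: reserve $t$ source vertices $a_1,\dots,a_t$ of $K_N$ to be the branch vertices of the target, and reserve a further $\binom{t}{2}$ source vertices, one $c_{ij}$ for each pair $i<j$, to serve as detours; this needs $t+\binom{t}{2}\le 6t^2-5t$, which holds for $t\ge 3$, in fact with plenty of room, and the slack will be used in the next step. The second step is to fix, for each pair $i<j$, the edge $e_{a_ic_{ij}}$ and $e_{c_{ij}a_j}$ — but here the thoroughly-labelled condition only promises, for a fixed $\gamma$, \emph{some} choice of the three edges $e_{a_ia_j},e_{a_ic_{ij}},e_{a_jc_{ij}}$ from the (at most two) parallel edges in each $\pi(\cdot\cdot)$, and the choice may differ between $\gamma=\alpha$ and $\gamma=\beta$. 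I would handle this by noting that since $\pi(a_ic_{ij})$ has at most two edges, after possibly passing to a sub-collection (this is where the generous bound $6t^2-5t$ rather than, say, $t+\binom{t}{2}$ is spent, by a pigeonhole/clean-up over the constantly many edge-choices) one can assume a single fixed edge works simultaneously — or, more cleanly, observe that since $\pi(c_{ij})$ is a tree, all of $\pi(a_i)\cup e_{a_ic_{ij}}\cup\pi(c_{ij})\cup e'_{a_jc_{ij}}\cup\pi(a_j)$ remains connected for \emph{any} incident edges $e,e'$, so one can take the union over the (at most four) relevant edge-pairs coming from $\gamma=\alpha$ and $\gamma=\beta$ and still have a connected subgraph containing an $\alpha$-labelled and a $\beta$-labelled vertex; collapsing this to a tree spanning it gives the branch structure for the target edge $ij$.

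The third step is to assemble the labelled $K_t^{\alpha,\beta}$-expansion $\rho$: put $\rho(i)=\pi(a_i)$ (or a subtree thereof), let $\rho(ij)$ be one of the edges of the connecting path between $\pi(a_i)$ and $\pi(a_j)$ that goes through $\pi(c_{ij})$, and verify condition (ii) of a labelled expansion — namely that for each vertex $i$ of $K_t$, since it is labelled with both $\alpha$ and $\beta$, every non-trivial leaf-to-leaf path in $T^\rho_i$ meets a vertex of $\rho(k)$ labelled $\alpha$ and one labelled $\beta$ for the appropriate neighbours; this is exactly what the detour construction was set up to deliver, because the relevant leaf-to-leaf path in $T^\rho_i$ runs from the edge $\rho(ij)$ into $\pi(a_i)$ and out along $\rho(ik)$, and the $\gamma$-labelled vertex guaranteed by the triple $(a_i,a_j,c_{ij})$ (respectively $(a_i,a_k,c_{ik})$) sits on the $c_{ij}$-detour which we have folded into the structure joining $\rho(i)$ to $\rho(j)$. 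The main obstacle I anticipate is the second step: the thoroughly-labelled hypothesis quantifies the edge-selection $e_{ab}$ \emph{after} fixing $\gamma$ and the triple, so one must carefully argue that the various $\gamma$- and triple-dependent choices can be reconciled (or absorbed by enlarging branch sets and re-trimming to trees) without destroying the tree/one-edge-per-pair requirements of a genuine expansion — this is precisely why the ``pseudo''-expansion notion with up to two parallel edges was introduced, and checking the reconciliation is the technical heart of the argument; the numerics $6t^2-5t$ should fall out of counting how many auxiliary branch sets the reconciliation consumes.
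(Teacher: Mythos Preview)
Your proposal has a genuine gap in the second and third steps. The thoroughly-labelled hypothesis applied to the triple $(a_i,a_j,c_{ij})$ only guarantees, for each $\gamma$, a $\gamma$-labelled vertex \emph{somewhere} in $\pi(a_i)\cup\pi(a_j)\cup\pi(c_{ij})$ (plus the chosen edges). There is no reason this vertex lies ``on the $c_{ij}$-detour'': it may sit deep inside $\pi(a_j)$. But condition~(ii) of a labelled expansion requires the $\alpha$- and $\beta$-labelled vertices on a leaf-to-leaf path of $T^\rho_i$ to lie in $\rho(i)$. If you set $\rho(i)=\pi(a_i)$ and route the edge $ij$ via $\pi(c_{ij})$, the detour $\pi(c_{ij})$ must be absorbed into either $\rho(i)$ or $\rho(j)$ (the edge $\rho(ij)$ is a single edge); whichever choice you make, the other branch set gains nothing, and for neither side do you have any control over where the promised $\gamma$-vertex landed. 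Your claim that ``the $\gamma$-labelled vertex \ldots\ sits on the $c_{ij}$-detour which we have folded into the structure'' is exactly the unproved step. The edge-multiplicity issue you focus on is a red herring by comparison; pigeonholing over the at most two parallel edges does not consume extra source vertices, so your explanation of the bound $6t^2-5t$ also does not work.

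The paper's argument fixes precisely this localisation problem, and this is where the $6t^2-5t$ count comes from. It reserves $t$ ``hub'' vertices $c^1,\ldots,c^t$ and, for every \emph{ordered} pair $(i,j)$, six private auxiliaries $v^{ij}_1,\ldots,v^{ij}_6$, totalling $t+6t(t-1)=6t^2-5t$. The thoroughly-labelled condition is then applied to the triple $(v^{ij}_1,v^{ij}_2,v^{ij}_3)$, which involves \emph{no} hub vertex and no vertex that will end up in any other branch set; after a harmless renaming one may assume the $\alpha$-vertex lies in $\pi(v^{ij}_2)$ on a $\pi(v^{ij}_1)$--$\pi(v^{ij}_3)$ path. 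A second triple $(v^{ij}_4,v^{ij}_5,v^{ij}_6)$ handles $\beta$. Concatenating gives a path $Q^{ij}$ from $\pi(c^i)$ outward that carries both labels and is entirely absorbed into the new branch set $\pi'(i)$. Because there is a separate $Q^{ij}$ for each ordered pair, every leaf-to-leaf path of $T^{\pi'}_i$ contains two such $Q$-paths and hence both labels inside $\pi'(i)$. The key ideas you are missing are (a) applying the triple condition to three auxiliaries dedicated to a single side, so the labelled vertex cannot escape into the wrong branch set, and (b) using separate auxiliaries for the two orderings of a pair, so that both endpoints of an edge get their own labelled vertices.
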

\begin{proof}
Let $K$ be the complete graph on the vertex set
\[
c^1,\ldots, c^t,\, v^{ij}_1,\ldots,v^{ij}_6\text{ for all distinct }i,j\in [t]
\]
of $6t^2-5t$ distinct vertices. Let $(X,\pi)$ be a $(\alpha,\beta)$-thoroughly labelled $K$-expansion. 

Consider arbitrary distinct indices $i,j\in [t]$.
By definition, there is a cycle~$C$ in 
\[
\pi(v^{ij}_1)\cup\pi(v^{ij}_1v^{ij}_2)\cup 
\pi(v^{ij}_2)\cup\pi(v^{ij}_2v^{ij}_3)\cup
\pi(v^{ij}_3)\cup\pi(v^{ij}_3v^{ij}_1)
\]
that contains a vertex with label~$\alpha$. 
By renaming the vertices $v^{ij}_1,v^{ij}_2,v^{ij}_3$
if necessary we may assume that there is a $\pi(v^{ij}_1)$--$\pi(v^{ij}_3)$~path $P_1$
in $C$ that contains a vertex in $\pi(v^{ij}_2)$ with label~$\alpha$. With an analogous 
argument, we may assume that there is a    $\pi(v^{ij}_4)$--$\pi(v^{ij}_6)$~path $P_2$
contained in 
\[
\pi(v^{ij}_4)\cup\pi(v^{ij}_4v^{ij}_5)\cup 
\pi(v^{ij}_5)\cup\pi(v^{ij}_5v^{ij}_6)\cup
\pi(v^{ij}_6)
\]
that contains a vertex in $\pi(v^{ij}_5)$ of label~$\beta$.
Using an edge in $\pi(v^{ij}_3v^{ij}_4)$,
as well as an edge in $\pi(c^iv^{ij}_1)$,
we can find a $\pi(c^i)$--$\pi(v^{ij}_6)$~path $Q^{ij}$ that contains both a 
vertex with label~$\alpha$ and a vertex with label~$\beta$ in its interior
and that itself is contained in the induced graph on 
$\pi(c^i)\cup\bigcup_{\ell=1}^6 \pi(v^{ij}_\ell)$.

Having constructed all such paths $Q^{ij}$, let $ab\in\pi(v^{ij}_6v^{ji}_6)$
with $a\in\pi(v^{ij}_6)$. Set $\pi'(ij)=ab$, and 
define a tree $T'_i$ by taking the union of all paths $Q^{ij}$, 
$a$--$Q^{ij}$~paths in $\pi(v^{ij}_6)$ together with a minimal subtree of $\pi(c^i)$
so as to result in a tree. Set $\pi'(i)=V(T'_i)$, and observe that $\pi'$ 
defines a $K_t$-expansion $Y$ that is contained in $X$. In $Y$ the trees $T^{\pi'}_i$ 
(recall the definition of a labelled expansion)
consist of $T'_i$ together with all edges $\pi'(ij)$. Every leaf-to-leaf path
in $T^{\pi'}_i$ passes through $\pi(c^i)$ and then contains $Q^{ij}$ and $Q^{ik}$
for two $j,k$. Consequently, every leaf-to-leaf path contains a vertex with label~$\alpha$
and a vertex  with label~$\beta$ that lies in $\pi'(i)$.
Therefore, $(Y,\pi')$ is a labelled $K_t^{\alpha,\beta}$-expansion.
\end{proof}

\begin{lemma}\label{caseilem}
Let $H$ be an $(\alpha,\beta)$-labelled graph. 
For every $k$ (and $H$), there is a $t$ such that every $(\alpha,\beta)$-thoroughly labelled $K_t$-expansion contains $k$ disjoint 
labelled $H$-expansions.
\end{lemma}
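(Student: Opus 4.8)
The plan is to reduce this to Lemma~\ref{lem:rootedKn} together with a simple "blow-up" argument for the complete graph. Lemma~\ref{lem:rootedKn} already tells us that a single $(\alpha,\beta)$-thoroughly labelled $K_{6s^2-5s}$-expansion contains a labelled $K_s^{\alpha,\beta}$-expansion. Since $H$ is $(\alpha,\beta)$-labelled (it carries only the two labels $\alpha,\beta$), and $K_s^{\alpha,\beta}$ is the complete graph with every vertex carrying both labels, for $s\ge |V(H)|$ the graph $H$ is a labelled minor of $K_s^{\alpha,\beta}$: indeed $H$ is a minor of $K_s$ in the ordinary sense, and in $K_s^{\alpha,\beta}$ every vertex — hence every branch set — already contains vertices of both labels, so the labelling condition~(ii) for labelled expansions is satisfied for free. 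By transitivity (Lemma~\ref{lem:transitivity}), a labelled $K_s^{\alpha,\beta}$-expansion therefore contains a labelled $H$-expansion.

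**Next I would** get $k$ \emph{disjoint} copies. The standard trick: a $(\alpha,\beta)$-thoroughly labelled $K_t$-expansion with $t = k\cdot m$, where $m = 6s^2-5s$ and $s = \max\{3,|V(H)|\}$, contains $k$ vertex-disjoint $(\alpha,\beta)$-thoroughly labelled $K_m$-expansions. This is because one can partition the vertex set of $K_t$ into $k$ blocks of size $m$; restricting $\pi$ to each block (keeping the branch sets $\pi(x)$, the edge-images $\pi(xy)$ for $x,y$ in the same block, and discarding the rest) yields a $(\alpha,\beta)$-thoroughly labelled $K_m$-expansion, since the defining condition of a thoroughly labelled expansion only quantifies over triples of vertices, and any triple inside a block still has its witnessing $\alpha$- and $\beta$-vertices. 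The $k$ restrictions use pairwise disjoint branch sets (a vertex of the host graph lies in exactly one branch set of the full $K_t$-expansion), hence are vertex-disjoint. Applying Lemma~\ref{lem:rootedKn} and then the minor/transitivity argument above inside each of the $k$ blocks produces $k$ pairwise disjoint labelled $H$-expansions. So one may take $t := k(6s^2-5s)$ with $s=\max\{3,|V(H)|\}$.

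**The main thing to be careful about** — rather than a genuine obstacle — is that the pseudo-expansions in Lemma~\ref{lem:rootedKn} are not expansions in the strict sense (an edge-image may be a \emph{set of at most two edges}), so one must make sure the restriction operation is performed at the level of these pseudo-expansions and that the output of Lemma~\ref{lem:rootedKn}, the labelled $K_s^{\alpha,\beta}$-expansion, \emph{is} a genuine labelled expansion (it is, since its conclusion is phrased as an honest labelled $K_t^{\alpha,\beta}$-expansion), so that Lemma~\ref{lem:transitivity} applies. The other point worth a sentence is verifying $H\mir K_s^{\alpha,\beta}$ explicitly: map the vertices of $H$ to singleton branch sets among $s\ge|V(H)|$ vertices of $K_s$, realize each edge of $H$ by the corresponding edge of $K_s$; condition~(i) is immediate and condition~(ii) holds because every branch set is a single vertex of $K_s^{\alpha,\beta}$, which carries both $\alpha$ and $\beta$, while for $H$ we only need labels actually present in $\ell_H(v)\subseteq\{\alpha,\beta\}$. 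Everything else is bookkeeping.
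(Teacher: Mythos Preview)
Your proof is correct and uses the same two ingredients as the paper: Lemma~\ref{lem:rootedKn} to pass from an $(\alpha,\beta)$-thoroughly labelled clique expansion to a genuine labelled $K_s^{\alpha,\beta}$-expansion, and the trivial observation that $H\mir K_s^{\alpha,\beta}$ once $s\ge|V(H)|$. The only difference is organisational: the paper applies Lemma~\ref{lem:rootedKn} once with $h=k|V(H)|$ to produce a single $K_h^{\alpha,\beta}$, inside which it then finds $k$ disjoint copies of $H$; you instead split the $K_t$ into $k$ blocks first and apply Lemma~\ref{lem:rootedKn} separately to each, which even gives a slightly better bound ($t$ linear rather than quadratic in~$k$).
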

\begin{proof}
Set $h=k|V(H)|$, and observe that there are $k$ disjoint labelled minors of $H$ in $K_h^{\alpha,\beta}$.
Set $t=6h^2-5h$, and apply Lemma~\ref{lem:rootedKn} in order to find $K_h^{\alpha,\beta}$
as a labelled minor in any $(\alpha,\beta)$-thoroughly labelled $K_t$-expansion.
\end{proof}

\begin{lemma}\label{serieslem}
Suppose $t\geq 9r$.
If $W$ is a $(t+1)$-wall with an $\alpha$-clean linkage of size $2r$, there is a $t$-subwall $W'$ of $W$ with an $\alpha$-clean linkage of size $r$ that is in series.
Moreover, if $W$ has an $(\alpha,\beta)$-clean pair of linkages of size $2r$, there is a $t$-subwall $W'$ of $W$ with an $(\alpha,\beta)$-clean pair of linkages that are both in series and of size $r$.
\end{lemma}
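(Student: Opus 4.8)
The plan is to exploit the fact that a pure linkage on a wall of order $t+1$ comes in exactly one of three flavours (in series, nested, crossing), and that nested and crossing linkages can be ``straightened'' by passing to a suitably chosen subwall. First I would handle the single-label statement, and then observe that the argument goes through verbatim in the two-label case because the cleanness conditions in the definition of an $(\alpha,\beta)$-clean pair are stated entirely in terms of the cyclic/linear order of the nails, and hence are preserved under restriction to a subwall and re-routing of paths that stays within the original bricks.

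The key steps I would carry out, in order, are as follows. First, since $\cL$ is $\alpha$-clean it is pure, so it is in series, nested, or crossing; the in-series case is immediate (take $W'=W$ and any $r$ of the $2r$ paths), so assume $\cL$ is nested or crossing. Second, recall that an $r$-wall of order comparable to $t$ can be found inside a $(t+1)$-wall $W$ provided $t\ge 9r$ (the constant $9$ leaving ample room): partition the horizontal and vertical paths of $W$ into $\Theta(r)$ consecutive blocks, so that $W$ decomposes into a grid-like arrangement of $\Theta(r)\times\Theta(r)$ subwalls, each of order roughly $t/(9r)\cdot\!$something, together with the ``corridors'' of the top row connecting their nails to the nails of $W$. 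Third — the heart of the argument — given a nested linkage of size $2r$, I would select every other path, so that between the two endpoints $p_1<p_2$ of a chosen path $P$ and the endpoints $p_1'<p_2'$ of the next chosen path $P'$ nested inside it, there lies at least one discarded path $R$ with endpoints strictly between them on each side; the $r$ chosen paths are then pairwise nested with ``gaps'' between consecutive nesting levels. Using those gaps and the top row of $W$, I would reroute each chosen path $P_i$ so that it descends into its own dedicated subwall $W_i'$ of the decomposition, travels to a nail of that subwall, and comes back: concretely, replace the portion of $P_i$ near each of its two endpoints by a path along the top row of $W$ into the corridor leading to $W_i'$, entering and leaving through distinct nails of $W_i'$. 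Because the $P_i$ were nested with gaps, these rerouting corridors can be chosen pairwise disjoint, and each rerouted path is a $W_i'$-path; moreover each still contains its $\alpha$-labelled vertex (the rerouting only replaces endpoint segments, not the interior carrying the label — or, if necessary, we choose the subwall $W_i'$ large enough that the relevant $\alpha$-vertex lies inside it). Finally, inside a fixed large subwall $W'$ (say, the one these reroutings all funnel into — one reorganises so that all $r$ paths end up using one common subwall $W'$ of order $t$ with $r$ disjoint corridors to its nails), the $r$ rerouted paths form a linkage that is in series, because consecutive ones occupy consecutive corridors along the top row; and the crossing case is entirely analogous, selecting every other path of a crossing linkage and noting that after rerouting the crossing pattern likewise collapses to series once the endpoints are spread out into disjoint corridors.

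For the ``moreover'' part, I would run the same construction simultaneously on $\cP$ and $\cQ$: take the size-$2r$ clean pair $(\cP,\cQ)$, select alternate paths in each to get size $r$, and reroute all of them into one common subwall $W'$ of order $t$ through pairwise disjoint top-row corridors. The resulting $\cP'$ is in series and $\alpha$-clean, $\cQ'$ is in series and $\beta$-clean, $|\cP'|=|\cQ'|=r$, and the interleaving condition $q_1,q_2\notin[p_1,p_1']\cup[p_2,p_2']$ is preserved because it only depends on the relative order of nails, which the rerouting respects (each path's two endpoints move to the two nails of its own corridor, and corridors of $\cP$-paths and $\cQ$-paths are laid out consistently with the original order). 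The main obstacle I anticipate is the bookkeeping in the rerouting step: one must verify that the re-routing segments along the top row and down the corridors are genuinely pairwise disjoint and internally disjoint from $W'$, that the final paths are honest $W'$-paths (first and last vertices in the nails of $W'$, interiors disjoint from $W'$), and that the clean/labelled vertex survives — all of which forces the quantitative choice $t\ge 9r$, since we spend a constant fraction of the wall on corridors and on the slack needed to realise the ``every other path'' gaps. The three-flavour trichotomy and the order-only nature of the cleanness conditions are what make this work cleanly; the rest is careful but routine wall surgery.
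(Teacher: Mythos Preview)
Your proposal has a genuine gap in the rerouting step, and the gap is not mere bookkeeping but a topological obstruction.

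You propose to select every other path of a nested (or crossing) linkage and then reroute the endpoints through ``corridors'' of the wall so that the new endpoints lie at consecutive nails of a subwall $W'$, hence in series. But the routing you describe takes place entirely in $W\setminus W'$ together with pieces of the top row, all of which is planar. For a nested linkage the selected paths have left endpoints $q_1<\cdots<q_r$ and right endpoints $q'_r<\cdots<q'_1$ on the top row; to make them in series on $W'$ you would need disjoint paths in the planar region $W\setminus W'$ realising the matching $q_j\mapsto 2j-1$, $q'_j\mapsto 2j$. This requires sending the outer order $q_1,\ldots,q_r,q'_r,\ldots,q'_1$ to the inner order $1,3,\ldots,2r-1,2r,\ldots,4,2$, which is not the same cyclic order as $1,2,\ldots,2r$; hence no such disjoint routing exists. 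The linkage paths $P_i$ themselves cannot help here, because for a nested linkage they are pairwise non-crossing arcs above the top row and so add nothing to break planarity. In short, routing through the wall alone can never turn a nested linkage into one in series; you need to \emph{combine} linkage paths.

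The paper's argument is much shorter and avoids this obstruction by a different mechanism. Instead of rerouting individual paths, it \emph{pairs} consecutive paths: for nested or crossing $\cP$, set
\[
P'_i \;=\; S_{p_{2i-1}}\,p_{2i-1}\,P_{2i-1}\,p'_{2i-1}\,R\,p'_{2i}\,P_{2i}\,p_{2i}\,S_{p_{2i}},
\]
where $R$ is the top row and $S_u$ is the short path from the nail $u$ one brick down. The key point is that the two \emph{left} endpoints $p_{2i-1},p_{2i}$ become the new endpoints, and these are automatically in series; the right endpoints $p'_{2i-1},p'_{2i}$ are joined via the top row, which works because in both the nested and crossing cases they are adjacent in the ordering of right endpoints. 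Deleting the top row and one outer column yields the subwall $W'$. The construction uses both $P_{2i-1}$ and $P_{2i}$ in a single new path, and it is precisely this concatenation through the (non-planar) linkage that permits the change of combinatorial type; your scheme discards half the paths rather than merging them and thereby loses the only available crossings.
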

\begin{proof}
We prove the second statement since the first one follows in the same way.
Let $(\cP,\cQ)$ be an $(\alpha,\beta)$-clean pair of linkages of size $2r$, and let $R$ be the top row of $W$.
For each nail $u$, let $S_u$ be the path contained in $W$ from $u$ to the upper right corner
 and then to the lower right corner of its brick in $W$; see Figure~\ref{seriesfig}. 
Let $\cP=\{P_1,\ldots,P_{2r}\}$ be the paths in $\cP$ and denote their left endvertices
by $p_1,\ldots,p_{2r}$,
and the corresponding right endvertices by $p'_1,\ldots,p'_{2r}$.
Assume $p_1<\ldots<p_{2r}$ where the ordering is from left to right in the top row $R$ of $W$.
Moreover, let $\cQ = \{Q_1,\ldots,Q_{2r}\}$ with left endvertices $q_1,\ldots,q_{2r}$
and right endvertices $q'_1,\ldots,q'_{2r}$ be ordered in the same way.

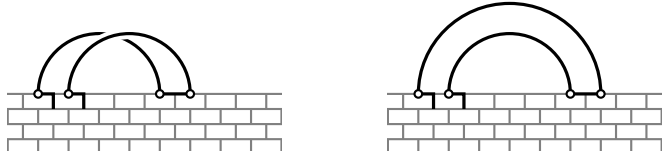
\begin{figure}[ht]
\centering
\begin{tikzpicture}

\tikzstyle{oben}=[line width=1.5pt,double distance=1.2pt,draw=white,double=black]
\tikzstyle{tinyvx}=[thick,circle,inner sep=0.cm, minimum size=1mm, fill=white, draw=black]
\tikzstyle{marked}=[line width=2pt,color=dunkelgrau]
\tikzstyle{wed}=[thick,color=dunkelgrau]

\def\wallheight{10}
\def\brickheight{0.2}

\pgfmathtruncatemacro{\lastrow}{\wallheight}
\pgfmathtruncatemacro{\penultimaterow}{\wallheight-1}
\pgfmathtruncatemacro{\lastrowshift}{mod(\wallheight,2)}
\pgfmathtruncatemacro{\lastx}{2*\wallheight+1}

\pgfmathsetmacro{\topy}{\wallheight*\brickheight}

\begin{scope}
\clip[] (\brickheight,\topy-0.75) rectangle (2*\wallheight*\brickheight-\brickheight,\topy+1);

\draw[oben] (3*\brickheight,\topy) arc (180:0:4*\brickheight);
\draw[oben] (5*\brickheight,\topy) arc (180:0:4*\brickheight);

\draw[wed] (\brickheight,0) -- (2*\wallheight*\brickheight+\brickheight,0);
\foreach \i in {1,...,\penultimaterow}{
  \draw[wed] (0,\i*\brickheight) -- (2*\wallheight*\brickheight+\brickheight,\i*\brickheight);
}
\draw[wed] (\lastrowshift*\brickheight,\lastrow*\brickheight) to ++(2*\wallheight*\brickheight,0);

\foreach \j in {0,2,...,\penultimaterow}{
  \foreach \i in {0,...,\wallheight}{
    \draw[wed] (2*\i*\brickheight+\brickheight,\j*\brickheight) to ++(0,\brickheight);
  }
}
\foreach \j in {1,3,...,\penultimaterow}{
  \foreach \i in {0,...,\wallheight}{
    \draw[wed] (2*\i*\brickheight,\j*\brickheight) to ++(0,\brickheight);
  }
}

\draw[hedge] (3*\brickheight,\topy) -- ++(\brickheight,0) -- ++(0,-\brickheight);
\draw[hedge] (5*\brickheight,\topy) -- ++(\brickheight,0) -- ++(0,-\brickheight);
\draw[hedge] (11*\brickheight,\topy) -- ++(2*\brickheight,0);

\node[tinyvx] at (3*\brickheight,\topy){};
\node[tinyvx] at (5*\brickheight,\topy){};
\node[tinyvx] at (11*\brickheight,\topy){};
\node[tinyvx] at (13*\brickheight,\topy){};
\end{scope}

\begin{scope}[shift={(5,0)}]
\clip[] (\brickheight,\topy-0.75) rectangle (2*\wallheight*\brickheight-\brickheight,\topy+1.5);

\draw[oben] (3*\brickheight,\topy) arc (180:0:6*\brickheight);
\draw[oben] (5*\brickheight,\topy) arc (180:0:4*\brickheight);

\draw[wed] (\brickheight,0) -- (2*\wallheight*\brickheight+\brickheight,0);
\foreach \i in {1,...,\penultimaterow}{
  \draw[wed] (0,\i*\brickheight) -- (2*\wallheight*\brickheight+\brickheight,\i*\brickheight);
}
\draw[wed] (\lastrowshift*\brickheight,\lastrow*\brickheight) to ++(2*\wallheight*\brickheight,0);

\foreach \j in {0,2,...,\penultimaterow}{
  \foreach \i in {0,...,\wallheight}{
    \draw[wed] (2*\i*\brickheight+\brickheight,\j*\brickheight) to ++(0,\brickheight);
  }
}
\foreach \j in {1,3,...,\penultimaterow}{
  \foreach \i in {0,...,\wallheight}{
    \draw[wed] (2*\i*\brickheight,\j*\brickheight) to ++(0,\brickheight);
  }
}

\draw[hedge] (3*\brickheight,\topy) -- ++(\brickheight,0) -- ++(0,-\brickheight);
\draw[hedge] (5*\brickheight,\topy) -- ++(\brickheight,0) -- ++(0,-\brickheight);
\draw[hedge] (13*\brickheight,\topy) -- ++(2*\brickheight,0);

\node[tinyvx] at (3*\brickheight,\topy){};
\node[tinyvx] at (5*\brickheight,\topy){};
\node[tinyvx] at (15*\brickheight,\topy){};
\node[tinyvx] at (13*\brickheight,\topy){};
\end{scope}

\end{tikzpicture}
\caption{How to turn a crossing or nested linkage into one that is in series}\label{seriesfig}
\end{figure}

Consider the paths 
\[
P'_i = S_{p_{2i-1}}p_{2i-1}P_{2i-1}p'_{2i-1}Rp'_{2i}P_{2i}p_{2i}S_{p_{2i}} 
\] 
for each $i\in [r]$
if $\cP$ is crossing or nested (see Figure~\ref{seriesfig}),
otherwise let $P'_i = S_{p_{2i}}p_{2i}P_{2i}p'_{2i}S_{p_{2i}}$.
We define $Q'_i$ analogously. Note that for each $i,j\in[r]$ the paths $P'_i$ and $Q'_j$
are pairwise disjoint --- this is due to the last condition in the definition 
of a pair of clean $(\alpha,\beta)$-linkages. 
Let $W'$ be the $t$-subwall obtained from $W$ by deleting the top row and leftmost column, 
let $\cP'=\{P'_i:i\in [r]\}$, and $\cQ'=\{Q'_i:i\in [r]\}$.
Note that the pair $(\cP',\cQ')$ is $(\alpha,\beta)$-clean for $W'$, 
which completes the proof.
\end{proof}

\begin{lemma}\label{linkredlem}
Let $r\geq 4t$, and let $W$ be a $100r$-wall that is either thoroughly
$(\alpha,\beta)$-labelled, or that is thoroughly $\alpha$-labelled and has a $\beta$-clean linkage
of size $r$. Then $W$ contains a $100t$-subwall $W'$ that has an $(\alpha,\beta)$-clean pair of linkages
of size $t$ such that both linkages are in series.
\end{lemma}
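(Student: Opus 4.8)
The plan is to split into the two cases of the hypothesis and in each case build the two linkages of the pair in \emph{disjoint} parts of a small subwall $W'$, so that the nail-separation condition from the definition of a clean pair of linkages holds automatically: all nails of one linkage will precede all nails of the other. In the first case, $W$ thoroughly $(\alpha,\beta)$-labelled, I would choose $W'$ to be a $100t$-subwall in a corner of $W$, so that $W\setminus W'$ contains a grid-like strip $S$ made up of several consecutive full horizontal paths of $W$ lying directly above the top row of $W'$, together with the vertical edges between them; every brick of $S$ is a brick of $W$ and hence contains an $\alpha$-vertex and a $\beta$-vertex. In the left part of $S$ I would route, for $t$ well-separated, left-to-right ordered pairs of nails of $W'$, pairwise disjoint $W'$-paths each of which traverses all six vertices of some brick of $S$ (a path can always be routed through a wall so as to do this, e.g.\ by covering two horizontally adjacent bricks); this yields an $\alpha$-clean in-series $W'$-linkage $\cP$ of size $t$. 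Running the same construction in the right part of $S$ with $\beta$-vertices gives a $\beta$-clean in-series $W'$-linkage $\cQ$ of size $t$ disjoint from $\cP$, and $(\cP,\cQ)$ is the desired pair. Here $100r\ge 400t$ leaves ample room and $r\ge 4t$ is not really needed.

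In the second case, $W$ thoroughly $\alpha$-labelled and carrying a $\beta$-clean linkage $\cL_0$ of size $r$, I would first, if $\cL_0$ is nested or crossing, apply Lemma~\ref{serieslem} to obtain a $\beta$-clean \emph{in-series} linkage $\cL_1$ of size $\lfloor r/2\rfloor\ge 2t$ in a subwall $W_1$ of $W$ that is still a $(100r-1)$-wall (if $\cL_0$ is already in series, take $W_1=W$, $\cL_1=\cL_0$); since every brick of $W_1$ is a brick of $W$, the wall $W_1$ is still thoroughly $\alpha$-labelled. Keep the $t$ left-most paths of $\cL_1$, and let $W'$ be a $100t$-subwall in a corner of $W_1$. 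The nails of the kept paths lie on the top row of $W_1$, not of $W'$; using the large grid region of $W_1\setminus W'$ lying above $W'$, I would reroute these $t$ paths by monotone paths that run high up and then descend on the left, producing a $\beta$-clean in-series $W'$-linkage $\cQ$ of size $t$ all of whose nails lie in a short left segment of the top row of $W'$ (the available vertical extent is $100r-1-100t\ge 299t$, which is more than enough). Finally, exactly as in the first case, I would build a fresh $\alpha$-clean in-series $W'$-linkage $\cP$ of size $t$ using the thorough $\alpha$-labelling of the bricks of $W_1\setminus W'$, confined to a low strip over the columns to the right of the nails of $\cQ$, hence disjoint from the (high, left) rerouting region of $\cQ$; then $(\cP,\cQ)$ is an $(\alpha,\beta)$-clean pair of in-series linkages of size $t$ in the $100t$-subwall $W'$ of $W$.

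The conceptual skeleton — case split, build in disjoint regions, observe that thorough labelling passes to subwalls, invoke Lemma~\ref{serieslem} only to repair purity — is short. The real work, and the reason for the constants ``$100$'' and the hypothesis $r\ge 4t$, is the simultaneous bookkeeping of the routings through walls: extending the paths of $\cL_0$ down to the nails of the small subwall $W'$, compactifying their nail set so that a $\cQ$-free block of the correct size remains in the top row of $W'$, and keeping all of this disjoint from the region in which the $\alpha$-clean linkage is constructed, all while never leaving a wall of the prescribed size. Since an inner path of a nested linkage is trapped by the outer ones and cannot be ``uncrossed'' by any rerouting, the factor-$2$ loss in Lemma~\ref{serieslem} when passing to an in-series linkage seems genuinely unavoidable, which is precisely why the hypothesis reads $r\ge 4t$ rather than $r\ge 2t$.
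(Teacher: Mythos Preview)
Your proposal is correct and follows essentially the same approach as the paper: apply Lemma~\ref{serieslem} once to make the given $\beta$-linkage in series (losing a factor of~$2$), manufacture an in-series $\alpha$-linkage out of top-row bricks via the thorough $\alpha$-labelling (and likewise a $\beta$-linkage in the thoroughly $(\alpha,\beta)$-labelled case), and keep the two linkages in disjoint lateral regions so that the nail-separation condition for a clean pair is automatic. The only cosmetic difference is geometric: the paper splits $W$ by a central vertical path into $W_1,W_2$, builds the two linkages over $W_1$ and $W_2$ respectively, and then takes $W'$ as a $100t$-subwall spanning both halves two rows down; you instead fix $W'$ in a corner first and work in the strip above it, left half versus right half. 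Both layouts achieve the same thing with the same room to spare.
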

\begin{proof}
First, by Lemma~\ref{serieslem}, if $W$ has a $\beta$-clean linkage of size $2r$
(rather than being thoroughly $(\alpha,\beta)$-labelled)
then it also has such a linkage of size $r\geq 2t$ that is in series --- at 
the price of reducing the size of the wall by~$1$.

Pick a vertical path $P$ of $W$ such that each of the two components $W_1,W_2$ of $W-P$ contains 
at least $49r$ of the vertical paths of $W$. We may assume that if $W$ has a $\beta$-clean linkage (which then 
is in series), then at least half of the paths of the linkage have both endvertices in $W_1$.
That is, $W_1$ has a $\beta$-clean linkage of size $t$.
Also, for each $i\in [2]$, let $W_i'$ be obtained by $W_i$ by deleting the first two horizontal paths.

Let $B_1,\ldots, B_t$ be a choice of $r$ (vertex-)disjoint bricks from the top row of $W_2$.
Let $Q_3$ be the third horizontal path of $W_2$ from the top; that is, the top path of $W_2'$. 
There are $2t$
disjoint $Q_3$--$\bigcup_{i=1}^tB_i$ paths $R_1,\ldots, R_{2t}$ such that 
$R_{2i-1}$ and $R_{2i}$ end in $B_i$ for each $i\in[t]$. 
Since each brick of $W_2$ contains a vertex labelled with $\alpha$ 
as $W$ is thoroughly $\alpha$-labelled, for each $i$, one of the two paths in $B_i$ between 
the endvertices of $R_{2i-1}$ and $R_{2i}$ contains a vertex of label~$\alpha$. 
Denote this subpath by $S_i$. 
Hence $(R_{2i-1}\cup S_i\cup R_{2i})_{i\in [t]}$
is an $\alpha$-clean linkage in series of $W_2'$ of size $t$. 

If $W$ is thoroughly $(\alpha,\beta)$-labelled we repeat this procedure in $W_1$ with 
the label $\beta$. 
If $W$ has a $\beta$-clean linkage,
then, by prolonging the linkage through
the wall to $W_1'$, we obtain a $\beta$-clean linkage of $W'_1$ of size at least $t$. 
In both cases, by 
using the horizontal paths that link $W'_1$ and $W_2'$ we find a $100t$-wall $W'$ as a subwall with an 
$(\alpha,\beta)$-clean pair of linkages of size at least $t$. Moreover, the linkages are in series. 
\end{proof}

Note that if $W$ is a wall with an 
$(\alpha,\beta)$-clean pair of linkages which is in series, the union of these two linkages is itself a linkage that is in series.
This follows from the definition of a $(\alpha,\beta)$-clean pair.
Hence, we may simply say that an 
$(\alpha,\beta)$-clean pair of linkages is in series if both linkages are in series.

\begin{lemma}\label{caseiiiclem} 
Let $H$ be an $(\alpha,\beta)$-labelled graph that has an embedding in the plane 
such that all labelled vertices lie in the boundary of the outer face, 
and assume $H$ to have label homogeneity at most~$2$. 
For every $k$ there is a $t$
such that the following holds: whenever $W$ is a wall of size at least $20t$ that has an in series $(\alpha,\beta)$-clean pair of linkages of size $t$,
then $W$ together with the linkage contains $k$ disjoint labelled $H$-expansions.
\end{lemma}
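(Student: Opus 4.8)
The plan is to reduce the statement to finding copies of a single canonical labelled wall and then to build those copies inside $W$ by splicing the linkage paths into its top row.

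First I would record the reduction. Since $H$ has label homogeneity at most $2$, Lemma~\ref{conveniencelem} together with the start of its proof yields an integer $h$, depending only on $H$, such that $H$ is a labelled minor of the elementary $2h$-wall $V$ in which the first $h$ nails are labelled $\alpha$ and the last $h$ nails are labelled $\beta$. By transitivity of the labelled minor relation (Lemma~\ref{lem:transitivity}), any labelled graph containing a labelled $V$-expansion as a subgraph also contains a labelled $H$-expansion as a subgraph, so disjoint labelled $V$-expansions yield disjoint labelled $H$-expansions. It therefore suffices to choose $t$ large in terms of $h$ and $k$ (hence in terms of $H$ and $k$) so that whenever $W$ is a wall of size at least $20t$ carrying an in series $(\alpha,\beta)$-clean pair $(\cP,\cQ)$ of linkages of size $t$, the graph $W$ together with these linkages contains $k$ disjoint labelled $V$-expansions.

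Next I would isolate the structure of the pair and build a single $V$-expansion. Let $R$ be the top row of $W$, with its nails linearly ordered along it. Since the pair is in series, $\cP\cup\cQ$ is one linkage in series, so its $2t$ paths meet $R$ in pairwise disjoint intervals occurring consecutively along $R$; the third axiom of a clean pair forces all the $\cP$-paths into one block of these intervals, with the $\cQ$-paths lying --- up to reversing the order on $R$ --- with at least $t/2$ of them entirely to the right of this $\cP$-block. Once $t\ge 2h$ I may pick $\alpha$-paths $P_1,\dots,P_h\in\cP$ and $\beta$-paths $Q_1,\dots,Q_h\in\cQ$ whose feet occur along $R$ in the order $p_1,p_1',p_2,p_2',\dots,p_h,p_h',q_1,q_1',\dots,q_h,q_h'$. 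Modifying $R$ by replacing each segment $p_iRp_i'$ by $P_i$ and each segment $q_iRq_i'$ by $Q_i$ turns a subpath of $R$ into a path $R^*$, and, together with the horizontal and vertical paths of $W$ below $R$, the path $R^*$ spans a subdivided wall with top row $R^*$. Making the branch sets of its $2h$ special nails equal to $P_1,\dots,P_h,Q_1,\dots,Q_h$, and absorbing the leftover stretches of $R^*$ into the branch sets of the adjacent unlabelled degree-$3$ vertices, produces a labelled $V$-expansion: each $P_i$ (resp.\ $Q_i$) is a path forming the degree-$2$ branch set of a nail and contains a vertex of the required label, so the leaf-to-leaf condition holds trivially. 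This settles the case $k=1$.

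Finally, for general $k$ I would take $t$ with, say, $t\ge 100hk$ and carry out $k$ such splicings in nested position: choose $\cP_1,\dots,\cP_k$ to be disjoint $h$-subsets of $\cP$ occurring in this left-to-right order inside the $\cP$-block, and $\cQ_1,\dots,\cQ_k$ to be disjoint $h$-subsets of the right-hand part of $\cQ$ occurring in this \emph{right-to-left} order, so that the feet of $\cP_j\cup\cQ_j$ nest along $R$ as $j$ decreases. For each $j$ the $V$-expansion built from $\cP_j$ and $\cQ_j$ is placed below the $R$-segment these feet span but outside the parts of $W$ used by the more central expansions $j+1,\dots,k$, its modified top row being routed through successively lower horizontal paths of $W$ wherever it must pass over those inner expansions; the bound $20t$ on the size of $W$ leaves room for all $k$ nested layers. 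As distinct linkage paths are internally disjoint from $W$ and from one another, the $k$ resulting labelled $V$-expansions are pairwise disjoint, as required. The main obstacle is precisely this last piece of wall surgery --- choosing the nested subwalls and pairwise disjoint routings inside $W$ and checking, for each spliced branch set, that it is a tree and meets the leaf-to-leaf condition of a labelled expansion --- which is routine but needs care; the rest of the argument is straightforward.
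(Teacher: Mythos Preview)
Your argument is correct and follows the same overall strategy as the paper: reduce to the canonical labelled $2h$-wall $V$ (the paper calls it $U$) via label homogeneity, and then find $k$ disjoint labelled $V$-expansions inside $W$ together with the linkages.

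The execution differs in one streamlining step. The paper first absorbs \emph{all} the linkage paths into the top row in one go, observing that $W$ together with the pair of in-series linkages contains as a labelled minor a single large wall $W'$ whose first half of nails are labelled $\alpha$ and second half $\beta$. After that, linkage paths have disappeared from the picture, and finding $k$ disjoint $U$-expansions in $W'$ is a clean induction on $k$: take a contiguous central subwall $W''\subseteq W'$ for $k-1$ copies, and use the U-shaped remainder $W'-W''$ (left and right columns plus bottom rows) for the $k$th copy. Your version instead keeps the linkage paths as separate objects and builds the $k$ copies directly in nested layers, routing each outer copy around the inner ones through successively lower horizontal paths. Both routings amount to the same nested carving; the paper's intermediate step of passing to $W'$ simply makes the bookkeeping lighter and avoids the hands-on ``wall surgery'' you flag at the end.
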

\begin{proof}
For a positive integer $t'$, let $U_{t'}$ be a labelled graph consisting
of an elementary $2t'$-wall 
where the first $t'$ nails are labelled $\alpha$ and the remaining ones are labelled $\beta$.
As $H$ has label homogeneity~$2$, there is a $t'$ such that $H$ is a labelled minor of $U_{t'}$. 
We now fix such a $t'$ and simply 
write $U$ instead of $U_{t'}$.
We will find $k$ disjoint labelled $U$-expansions, that then contain $k$ disjoint labelled
$H$-expansions.

We set $t = 5kt'$.
Let $(\cP,\cQ)$ be an $(\alpha,\beta)$-clean pair of linkages of the wall $W$ of size $t$
that is in series.
As $\cP$ and $\cQ$ are in series, all paths in $\cP$ connect to the top row of $W$ left of all paths in $\cQ$ or vice versa.
In particular, $W$, which has size at least $20t=100kt'$, together with $\cP$ and $\cQ$ 
contains as a labelled minor a $10kt'$-wall $W'$ in which the 
first $5kt'$ nails are labelled $\alpha$ and the 
remaining $5kt'$ nails are labelled $\beta$.
We claim that
\begin{equation}\label{wallcarving}
\emtext{
$W'$ contains $k$ disjoint labelled $U$-expansions.
}
\end{equation} 
The claim is proved by induction on $k$.
For $k=1$, \eqref{wallcarving} holds as $U$ is a labelled minor of $W'$.
Suppose now that $k>1$.
Let $W''$ be a subwall of $W'$ of size $10(k-1)t'$ that
 contains exactly $5(k-1)t'$  $\alpha$-labelled and
and exactly $5(k-1)t'$ $\beta$-labelled nails of $W'$ such that the horizontal
and vertical paths of $W'$ that $W''$ meets are contiguous.
By induction, $W''$ contains $k-1$ labelled $U$-expansions. 
The graph  $\tilde{W}=W'-W''$ contains the leftmost and rightmost $5t'-1$ vertical paths of $W'$,
as well as the $5t'$ bottommost horizontal paths.
Then,  $\tilde{W}$ contains $U$ as a labelled minor.
In total, we have found $k$ disjoint labelled $U$-expansions.
This proves~\eqref{wallcarving} and the lemma.
\end{proof}

\begin{lemma}\label{caseiiilem}
Let $H$ be an $(\alpha,\beta)$-labelled graph that has an embedding in the plane 
such that all labelled vertices lie in the boundary of the outer face, 
and assume $H$ to have label homogeneity at most~$2$. 
For every $k$ there is a $t$
such that the following holds: if a graph $G$ consists
of a wall $W$ of size at least $100t$ such that
	\begin{enumerate}[label=\rm (\alph*)]
		\item $W$ is $(\alpha,\beta)$-thoroughly labelled,
		\item for some $\gamma\in \{\alpha,\beta\}$, 
		the wall $W$ is $\gamma$-thoroughly labelled and
		has an $(\{\alpha,\beta\}\sm \gamma)$-clean linkage of size $t$, or
		\item $W$ has a pair of $(\alpha,\beta)$-clean linkages of size $t$,
	\end{enumerate}
then $G$ contains $k$ disjoint labelled $H$-expansions.
\end{lemma}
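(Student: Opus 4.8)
The plan is to chain together the three preceding lemmas. Given $k$ and $H$, first apply Lemma~\ref{caseiiiclem} to obtain an integer $t_1$ such that every wall of size at least $20t_1$ that carries an in-series $(\alpha,\beta)$-clean pair of linkages of size $t_1$ contains, together with that pair, $k$ disjoint labelled $H$-expansions. We claim $t:=4t_1$ works. In each of the three cases it suffices to exhibit, inside $G$, a subwall $W'$ of size at least $20t_1$ equipped with an in-series $(\alpha,\beta)$-clean pair of linkages of size $t_1$, since Lemma~\ref{caseiiiclem} then finishes. We use freely that a clean linkage, or a clean pair of linkages, of size $s$ contains a sub-linkage, resp.\ a sub-pair, of any prescribed size $s'\le s$ with the same cleanness properties, and that Lemmas~\ref{serieslem} and~\ref{linkredlem}, as their proofs show, remain valid when the wall is only assumed to have size at least the stated bound rather than exactly that bound.

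Consider cases (a) and (b). In case (a), $W$ is thoroughly $(\alpha,\beta)$-labelled; in case (b) with $\gamma=\alpha$, $W$ is thoroughly $\alpha$-labelled and has a $\beta$-clean linkage of size $t=4t_1$. Either way $W$ has size at least $100t=400t_1$, so Lemma~\ref{linkredlem}, applied with its parameter $r$ equal to $4t_1$ and its parameter $t$ equal to $t_1$ (the inequality $r\ge 4t_1$ holds), produces a $100t_1$-subwall $W'$ of $W$ carrying an in-series $(\alpha,\beta)$-clean pair of linkages of size $t_1$. Since $100t_1\ge 20t_1$, we are done in these cases. Case (b) with $\gamma=\beta$ is obtained by exchanging the roles of $\alpha$ and $\beta$ everywhere; this is legitimate because both the hypothesis on $H$ (label homogeneity at most~$2$) and Lemma~\ref{caseiiiclem} are symmetric in the two labels.

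Finally, consider case (c): $W$ has a pair of $(\alpha,\beta)$-clean linkages of size $t=4t_1\ge 2t_1$. Applying the second statement of Lemma~\ref{serieslem} with $r:=t_1$ --- its hypothesis relating the wall size to $9r=9t_1$ being amply satisfied, as $W$ has size at least $400t_1$ --- yields a subwall $W'$ of $W$ of size at least $400t_1-1\ge 20t_1$ equipped with an in-series $(\alpha,\beta)$-clean pair of linkages of size $t_1$. Lemma~\ref{caseiiiclem} applied to $W'$ and this pair produces the desired $k$ disjoint labelled $H$-expansions in $G$, completing the proof.

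The only point requiring care is the bookkeeping: one must check that the wall sizes and linkage sizes line up along the passage from Lemma~\ref{serieslem} or Lemma~\ref{linkredlem} to Lemma~\ref{caseiiiclem}, and that each intermediate step delivers its conclusion on an explicitly named subwall carrying exactly the structure --- ``thoroughly labelled'' or an in-series clean pair --- that the next step consumes. Since those lemmas were stated with precisely this in mind, the present argument is a routine concatenation.
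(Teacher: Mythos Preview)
Your proof is correct and follows essentially the same route as the paper: obtain the threshold $s$ (your $t_1$) from Lemma~\ref{caseiiiclem}, set $t=4s$, use Lemma~\ref{linkredlem} to reduce cases~(a) and~(b) to an in-series $(\alpha,\beta)$-clean pair, and finish with Lemma~\ref{caseiiiclem}. The paper's own proof is terser and does not single out case~(c), simply writing that ``with Lemma~\ref{linkredlem}'' one may assume the pair is in series; your explicit use of Lemma~\ref{serieslem} for case~(c) fills in what the paper leaves implicit, and your careful parameter bookkeeping is a welcome addition.
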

\begin{proof}
For a given $k$, let $s$ be as the $t$ in the statement of
 Lemma~\ref{caseiiiclem}, and set $t=4s$. Then, with Lemma~\ref{linkredlem},
we may assume that $W$ has size $100s$ and comes with a 
 $(\alpha,\beta)$-clean pair of linkages of size $s$ that are in series. Lemma~\ref{caseiiiclem}
now yields the $k$ disjoint labelled $H$-expansions.
\end{proof}

We can now prove our main result.

\begin{proof}[Proof of Theorem~\ref{thm:2con}]
Necessity follows from Lemmas~\ref{outerlem},~\ref{seplabelslem} and~\ref{2homolem}.

It remains to prove sufficiency.
For this, let $H$ be a $2$-connected $(\alpha,\beta)$-labelled graph that has an embedding as in the statement.
To proceed to the difficult case, we assume that $H$ contains vertices of both labels, $\alpha$ and $\beta$.
(If not, set $\alpha=\beta$.)

Suppose the theorem is false. Then there is a largest $k<\infty$ such that
there are values $f(2),\ldots,f(k-1)$ such that  
for all $k'<k$ every graph $G$ either contains $k'$ disjoint labelled $H$-expansions
or a vertex set $X$ of size $|X|\leq f(k')$ that meets every $H$-expansion.

Fix numbers $t_1\gg t_2\gg t_3 \gg k$, where we make precise
what that means below.
Moreover, choose $f(k)$ 
such that $t_1 \le \min\{f(k) - 2f(k-1),f(k)/3\}$
and complete $f$ to a function $f:\mathbb N\to \mathbb N$.

By the choice of $k$ we may pick a minimal counterexample $(G,k)$ 
to $f$ being an \EP~function for the family of labelled $H$-expansions. 
Let $\cT$ be the tangle as defined in Lemma~\ref{largetanglelem} with $t_1$ playing the role of $t$ which, by Lemma~\ref{largetanglelem}, has size at least $t_1$.

We assume $t_1$ to be chosen large enough 
such that Theorem~\ref{lemma: tangle wall} yields a $t_2$-wall~$W_1$ whose induced tangle
$\mathcal T_{W_1}$ is a truncation of $\mathcal T$.
Next, we assume $t_2$ to be
large enough such that $t_2$ and $t_3$ can play the roles of $t'$ and $t$
in Theorem~\ref{thm:simpleflatwall}.

We now go through the different outcomes of Theorem~\ref{thm:simpleflatwall}.
For outcome~\ref{case1}, we apply Lemma~\ref{caseilem}, 
where we choose $t_3$  large enough to yield $k$ disjoint $H$-expansions. 
For outcome~\ref{case2}, we apply Lemma~\ref{caseiiilem}, where again we assume 
that~$t_3$ is large enough to ensure $k$ disjoint $H$-expansions.

Finally, we observe that the outcome~\ref{case3} may not occur.
Indeed, recall that~\ref{case3} yields a label $\gamma\in\{\alpha,\beta\}$ and
a set $Z$ such that 
$|Z|< t_2$ and the unique $\cT_{W_1}$-large block of $G-Z$ 
does not contain any vertex labelled with $\gamma$.
Since $H$ is 2-connected by assumption, any labelled $H$-expansion in $G-Z$ is edge-disjoint from the $\cT_{W_1}$-large block of $G-Z$.
As $|Z| < t_2 \le f(k)$
and $(G,k)$ is a counterexample, however, $G-Z$ must contain some labelled $H$-expansion.
Thus, there is a separation $(A,B)$ of order at most $|Z|+1<t_1 $  such that $B$
contains the unique $\cT_{W_1}$-large block of $G-Z$. Hence, $(A,B)\in \cT$, but $A$ contains a labelled $H$-expansion (hence $(B,A)\in \cT$),
which is a contradiction to~\ref{T2}.
This completes the proof.
\end{proof}

\section{Zero cycles and zero $H$-expansions}\label{zerosec}

A good number of Erd\H os-P\'osa type results concern paths and cycles, 
and in many of these additional parity or modularity constraints are imposed on the
length of the paths or cycles. The first of these is certainly the result
of Dejter and Neumann-Lara (see~\cite{DNL87}) that \emph{even} cycles have the Erd\H os-P\'osa
property. Thomassen~\cite{Tho88} extended their result to encompass, in particular, 
all cycles whose length is divisible by a fixed integer. 
In this section, we outline how our main theorem can be extended to include such modularity
constraints, too. 

For paths, 
Thomassen's modularity constraints were further generalised  by 
 endowing the edges of the host graph with weights, or labels, from a group. 
Two different approaches were pursued:
Chudnovsky et al.~\cite{CGGGLS06} considered directed group labellings, 
while 
Wollan~\cite{Wol10} treated
 undirected group labellings. We focus here on undirected labellings, 
but also mention how directed labellings can be accomodated with similar arguments. 
Moreover, we prefer to talk about \emph{group weights} so that they do not get confused with 
the vertex labels we have treated so far.

Let $\Gamma$ be an abelian group. 
Assume that the edges of a graph $G$ are endowed with directed or undirected weights $\gamma$ 
from $\Gamma$. For undirected weights, this simply means that a function $\gamma:E(G)\to \Gamma$ is 
fixed. For directed weights, on the other hand, we first pick an arbitrary 
 reference orientation $\vec G$ of $G$.
To keep notation consistent with the undirected case, we interpret $uv$ as an edge pointing 
from $u$ to $v$, while $vu$ denotes the inverse edge that points from~$v$ to~$u$. 
We now define directed weights as a 
function $\gamma$ on the edges of $\vec G$ as well as their inverses 
such  that $\gamma(uv)=-\gamma(vu)\in\Gamma$ for every edge $uv$ of $\vec G$. 
(Note that~$vu$ will not be an edge of $\vec G$ if $uv\in E(\vec G)$.)

Let $P=v_0v_1\ldots v_\ell$ be a path in $G$, where in the directed setting we implicitly fix~$v_0$
as first vertex.
The \emph{weight} of $P$ is defined as 
\[
\gamma(P)=\gamma(v_0v_{1})+\ldots+\gamma(v_{\ell-1}v_\ell).
\]
Note that, in the directed setting we sum up the weights of the edges 
directed from $v_i$ to $v_{i+1}$. Still in the directed setting, we furthermore observe that 
while normally it makes a difference in which direction we sum the edges of $P$, 
it does not if the weight of $P$ is~$0$.
Indeed, denote by $P'$ the path $P$, only with $v_\ell$ as first vertex and assume that $\gamma(P)=0$.
Then
\[
0=-0=-\gamma(v_0v_{1})-\ldots-\gamma(v_{\ell-1}v_\ell)=\gamma(v_{\ell}v_{\ell-1})+\ldots+\gamma(v_1v_0)
=\gamma(P').
\]
A path whose weight is~0 is a \emph{zero path}.

We now formulate our extension of Theorem~\ref{thm:2consimple} to 
graphs with undirected group weights. For this, we say that 
an $H$-expansion in $G$ is a \emph{zero} $H$-expansion if the 
sum of the weights of all the edges in the expansion is~$0$. 
We later briefly discuss directed group weights. 

\begin{theorem}\label{zerothm}
Let $\Gamma$ be a finite abelian group.
Let $H$ be a labelled $2$-connected graph such that each vertex carries at most one label.
Then the zero labelled $H$-expansions have the Erd\H os-P\'osa property if  
there is an embedding of $H$ in the plane such that the boundary $C$ of the outer 
face contains all labelled vertices, and there are two simply-labelled subpaths $P,Q\subseteq C$
that cover all of~$V(C)$.
\end{theorem}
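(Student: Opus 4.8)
The plan is to prove the theorem by induction on $|\Gamma|$, following the proof of Theorem~\ref{thm:2con} almost line by line and inserting one new, group-theoretic ingredient into its two decisive cases. Note that here only sufficiency is claimed, so none of the necessity lemmas are needed. The base case $|\Gamma|=1$ is exactly Theorem~\ref{thm:2consimple}, since then every labelled $H$-expansion is a zero labelled $H$-expansion. Assume now $|\Gamma|>1$, suppose the statement fails, let $k$ be largest such that an \EP{} function up to $k-1$ exists for the family $\cH$ of all zero labelled $H$-expansions (which are connected, as $H$ is $2$-connected), and pick a minimal counterexample $(G,k)$ with its $\Gamma$-weighting~$\gamma$. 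As in the proof of Theorem~\ref{thm:2con}, Lemma~\ref{largetanglelem} yields a tangle $\cT$ of large order, Theorem~\ref{lemma: tangle wall} yields a large wall $W_1$ with $\cT_{W_1}$ a truncation of $\cT$, and Theorem~\ref{thm:simpleflatwall}, applied to the $(\alpha,\beta)$-labels, leaves three outcomes. Outcome~\ref{case3} is excluded verbatim: a zero labelled $H$-expansion is in particular a labelled $H$-expansion, hence contains a vertex of the forbidden label, hence is edge-disjoint from the $\cT_{W_1}$-large block of $G-Z$, contradicting~\ref{T2}.

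For outcomes~\ref{case1} and~\ref{case2} I would prove group-aware versions of Lemmas~\ref{lem:rootedKn}, \ref{caseilem}, \ref{serieslem}, \ref{linkredlem}, \ref{caseiiiclem} and~\ref{caseiiilem}: for every $k$ and every finite abelian $\Gamma$ there is a threshold $t$ (now growing with $|\Gamma|$) such that the same hypotheses already force $k$ disjoint \emph{zero} labelled $H$-expansions. These strengthenings are themselves proved by induction on $|\Gamma|$. Let $X_0\subseteq G$ be the substructure handed to us --- an $(\alpha,\beta)$-thoroughly labelled $\Gamma$-weighted $K_t$-expansion in case~\ref{case1}, or (via Lemmas~\ref{serieslem} and~\ref{linkredlem}) a wall with an in-series $(\alpha,\beta)$-clean pair of linkages in case~\ref{case2} --- and let $\Gamma^\ast\le\Gamma$ be the subgroup generated by the weights of all cycles of $X_0$. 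If $\Gamma^\ast=\Gamma$, then $X_0$ is rich enough to be tuned: one builds the $H$-expansions exactly as in the unweighted proof, but equips each of them with a reserved internal sub-path that can be re-routed, disjointly from the other copies, through a bounded family of cycles realising a generating set of $\Gamma$; such a re-routing shifts the total weight of the expansion by an arbitrary prescribed element of $\Gamma$, and in particular can force it to~$0$. If $\Gamma^\ast\subsetneq\Gamma$, we pass to the quotient: modulo $\Gamma^\ast$ every cycle of $X_0$ has weight $0$, and --- because $X_0$ is dense (it contains a large clique minor, resp.\ a large wall with handles) --- this forces the weighting read in $\Gamma/\Gamma^\ast$ to be, up to a vertex potential valued in the $2$-torsion of $\Gamma/\Gamma^\ast$, identically zero; after deleting a bounded set that trivialises this residual potential, we are left with a substructure (still carrying a large clique minor, resp.\ a large wall with the required linkages and labels) that we may treat as an instance for the group $\Gamma^\ast$, and the induction hypothesis supplies the $k$ disjoint $\Gamma^\ast$-zero, hence $\Gamma$-zero, labelled $H$-expansions.

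The transcription of the proof of Theorem~\ref{thm:2con}, including the exclusion of outcome~\ref{case3}, is routine and needs no new idea; the whole weight of the argument sits in the dichotomy above. On the rich side the care is in arranging $k$ simultaneous tuning sub-paths that stay pairwise disjoint and respect the $(\alpha,\beta)$-labels of the expansions, which is what inflates the thresholds by a factor depending on $|\Gamma|$ and $k$. On the degenerate side, the delicate point --- and the one I expect to be the main obstacle --- is making the reduction to $\Gamma/\Gamma^\ast$ precise for \emph{expansions} rather than cycles: one must verify that removing the residual potential costs only a bounded number of vertices and that the surviving substructure still satisfies all hypotheses of Lemmas~\ref{caseilem} and~\ref{caseiiilem}, and one must control exactly how the total weight of an $H$-expansion depends on the parity of its vertex degrees, which is where the concrete shape of expansions and the $2$-connectedness of $H$ enter.
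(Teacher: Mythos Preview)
Your overall architecture --- copy the proof of Theorem~\ref{thm:2con}, keep outcome~\ref{case3} as is, and upgrade outcomes~\ref{case1} and~\ref{case2} to yield \emph{zero} labelled $H$-expansions --- matches the paper. Where you diverge is in how you obtain zero expansions inside the structure $X_0$. You set up an induction on $|\Gamma|$ via the dichotomy $\Gamma^\ast=\Gamma$ versus $\Gamma^\ast\subsetneq\Gamma$, with a re-routing/tuning argument on the rich side and a potential/quotient reduction on the degenerate side. The paper does something much simpler and avoids both horns entirely: it uses Thomassen's pigeonhole trick (Lemma~\ref{zeropathlem}) that on a path any $r|\Gamma|$ marked vertices contain $r$ with pairwise zero subpaths. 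Iterating this, one extracts from a large wall a still large \emph{zero} subwall (all subdivided edges are zero paths), and then from the attached $(\alpha,\beta)$-clean pair of in-series linkages a still large pair of \emph{zero} linkages. Now run Lemma~\ref{caseiiiclem} inside this zero wall-plus-linkages; every $H$-expansion it produces is a union of complete subdivided edges of a zero structure and therefore automatically has total weight~$0$. Outcome~\ref{case1} is reduced to outcome~\ref{case2} by first passing from the thoroughly labelled $K_t$ to a thoroughly labelled wall via Lemma~\ref{lem:rootedKn}. No induction on $|\Gamma|$, no tuning, no potentials.

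Your approach is not wrong in spirit, but the degenerate branch is a genuine gap as written. If every cycle of $X_0$ is zero in $\Gamma/\Gamma^\ast$, this constrains \emph{cycle} weights, not the total edge-sum of an $H$-expansion; for undirected weights the latter picks up $\sum_v d_X(v)\phi(v)$ for whatever potential $\phi$ you extract, and the degrees in an $H$-expansion are not all even. You flag this yourself as ``the main obstacle'' but do not resolve it, and ``deleting a bounded set that trivialises this residual potential'' is not a mechanism that makes sense here --- a potential lives on all vertices, not on a bounded set. The rich branch is more plausible (swapping one internal subpath for a parallel one shifts the weight by a cycle value), but arranging $k$ disjoint tuning gadgets, each realising all of $\Gamma$, inside a wall while preserving the label constraints is itself a lemma you would have to prove. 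In short: the paper's pigeonhole route is both shorter and complete; if you want to salvage your route, the degree-parity issue in the degenerate case needs an actual argument, not just an acknowledgement.
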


Kakimura and Kawarabayashi~\cite{KK12} proved that the $A$-cycles whose length is divisible by~$p$
have the Erd\H os-P\'osa property. From Theorem~\ref{zerothm} two different generalisations can be deduced.

\begin{corollary}
Let $p$ be any positive integer. Then the $A$--$B$-cycles whose length is divisible by~$p$
have the Erd\H os-P\'osa property. 
\end{corollary}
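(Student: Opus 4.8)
The plan is to read the corollary off Theorem~\ref{zerothm} by encoding both constraints on the cycles --- "meets $A$ and meets $B$'' and "length divisible by $p$'' --- via labels and group weights. Take $\Gamma=\Z/p\Z$, which is a finite abelian group, and let $H=K_3$ be the triangle on $\{u,v,w\}$ in which $u$ carries the single label $\alpha$, $v$ carries the single label $\beta$, and $w$ is unlabelled. Then $H$ is $2$-connected and each vertex carries at most one label; in its (unique) plane embedding the boundary of the outer face is the triangle $C=uvwu$, and $C$ is covered by the two simply-labelled subpaths $P=uw$ (whose only labelled vertex, $u$, has label $\alpha$) and $Q=wv$ (whose only labelled vertex, $v$, has label $\beta$). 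Hence $H$ satisfies the hypothesis of Theorem~\ref{zerothm}, so the zero labelled $H$-expansions have the \EPP.

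To use this, given $G$ together with sets $A,B\subseteq V(G)$ --- which I may take to be disjoint, the case $A\cap B\ne\es$ being absorbed by additionally invoking the single-label divisibility version (the theorem of Kakimura and Kawarabayashi, itself a special case of Theorem~\ref{zerothm}) --- and given $p$, I would form the $\Z/p\Z$-weighted, $\{\alpha,\beta\}$-labelled graph $G^{*}$ obtained from $G$ by giving every edge weight~$1$, labelling every vertex of $A$ with $\alpha$ and every vertex of $B$ with $\beta$. A \emph{minimal} labelled $K_3$-expansion in $G^{*}$ is exactly a cycle $C$ whose vertex set is partitioned into three arcs $\pi(u),\pi(v),\pi(w)$ with $\pi(u)$ meeting $A$ and $\pi(v)$ meeting $B$; thus its underlying graph is precisely an $A$--$B$-cycle, and conversely every $A$--$B$-cycle of $G$ can be displayed in this way (split it at a vertex of $A$, at a vertex of $B$, and at one further vertex). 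Its weight in $\Z/p\Z$ is $\sum_{e\in E(C)}1=|E(C)|$, so it is a \emph{zero} expansion if and only if $p$ divides the length of $C$. In this way the minimal zero labelled $H$-expansions of $G^{*}$ are exactly the $A$--$B$-cycles of $G$ of length divisible by $p$, and feeding $G^{*}$ into the \EP{} function supplied by Theorem~\ref{zerothm} gives an \EP{} function for these cycles.

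The step that needs genuine care --- and the main obstacle --- is precisely this correspondence. A \emph{non}-minimal labelled $K_3$-expansion may absorb pendant trees into its branch sets, and then its total edge-weight no longer records the length of the unique cycle it contains, so the family of all zero labelled $K_3$-expansions of $G^{*}$ is not literally the family of $A$--$B$-cycles divisible by $p$. What has to be shown is that this discrepancy is harmless for the \EPP: on the covering side any hitting set for the $A$--$B$-cycles divisible by $p$ automatically meets every zero $K_3$-expansion of $G^{*}$, so the argument must instead secure that a packing of zero $K_3$-expansions can be replaced by a packing of such cycles --- equivalently, that it suffices to restrict attention to minimal expansions, for which "weight of the expansion'' and "length of the cycle modulo $p$'' coincide. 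This bookkeeping, together with the (easy) check that the triangle $H$ above satisfies the hypothesis of Theorem~\ref{zerothm}, is the entire proof; everything substantive is the content of that theorem. The same recipe, with $\Gamma$ an arbitrary finite abelian group and the edge weights read off a group weighting of $G$, gives the undirected group-weighted statement, and --- since a zero \emph{cycle} has the same weight whichever way it is traversed --- an analogous argument accommodates directed group weights as well.
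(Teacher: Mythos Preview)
Your setup is the intended one (the paper states the corollary without proof): $\Gamma=\Z/p\Z$, every edge of weight~$1$, label $A$ with $\alpha$ and $B$ with $\beta$, and apply Theorem~\ref{zerothm} with $H$ the labelled triangle. But the gap you flag is not mere bookkeeping, and your proposed resolution is wrong. The claim that ``any hitting set for the $A$--$B$-cycles divisible by $p$ automatically meets every zero $K_3$-expansion'' is false: with $p=3$, take a $5$-cycle $v_1\ldots v_5$ with a pendant edge at $v_1$, and set $A=\{v_2\}$, $B=\{v_4\}$. The whole graph (six edges) is a zero labelled $K_3$-expansion --- put $v_1,v_5$ and the pendant vertex into the branch set of the unlabelled vertex of $K_3$ --- yet it contains no $A$--$B$-cycle of length divisible by~$3$. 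Hence a packing of zero $K_3$-expansions cannot in general be converted into a packing of target cycles by passing to minimal sub-expansions: the unique cycle inside a zero expansion need not itself be zero. The \emph{statement} of Theorem~\ref{zerothm}, applied as a black box to all zero labelled $K_3$-expansions, therefore does not deliver the corollary.

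What does work is the \emph{proof} of Theorem~\ref{zerothm}. As the sketch records, the $H$-expansions it actually constructs sit inside a zero wall together with a zero $(\alpha,\beta)$-clean pair of linkages; for $H=K_3$ these expansions are cycles assembled from zero subpaths, hence of length divisible by~$p$. Equivalently, the proof establishes the stronger statement that \emph{minimal} zero labelled $H$-expansions have the \EPP{} (Lemma~\ref{largetanglelem} only needs the family to consist of connected graphs), and for the labelled triangle these are precisely the $A$--$B$-cycles of length divisible by~$p$. So the corollary follows, but only after opening the box.
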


For the second generalisation, given a vertex set $A$ and an integer $\ell$, call a cycle
an \emph{$\ell\cdot A$-cycle} if it contains at least $\ell$ vertices from $A$.
In this sense, the result of Kakimura and Kawarabayashi is about $1\cdot A$-cycles. 

\begin{corollary}
Let $\ell$ be any positive integer and let $\Gamma$ be a finite abelian group. Then the zero $\ell\cdot A$--cycles
have the Erd\H os-P\'osa property. 
\end{corollary}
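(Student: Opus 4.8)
The plan is to present zero $\ell\cdot A$-cycles as zero labelled $H$-expansions for a suitably labelled cycle $H$, and then to invoke Theorem~\ref{zerothm}. Given a graph $G$ with undirected $\Gamma$-weights $\gamma$ and a vertex set $A$, I first make $G$ a labelled graph by giving every vertex of $A$ the single label $\alpha$ (and leaving all other vertices unlabelled). For the pattern I take $H=C_m$ with $m=\max\{\ell,3\}$ and label exactly $\ell$ of its vertices with $\alpha$ (all of them once $\ell\ge 3$). This $H$ is $2$-connected, each of its vertices carries at most one label, and it meets the geometric hypothesis of Theorem~\ref{zerothm}: drawing $C_m$ as a cycle, the boundary of the outer face is all of $H$ and hence contains the labelled vertices, and since only $\alpha$ occurs every subpath of the boundary is simply-labelled, so any two complementary arcs of $C_m$ furnish the required paths $P$ and $Q$.

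Next I would establish the exact correspondence between the two families, working throughout with minimal expansions. Because every vertex of $C_m$ has degree $2$, minimality forces each branch set to be a path running between its two incident connecting edges, so a minimal labelled $C_m$-expansion in $G$ is nothing but a cycle $C$ cut into $m$ consecutive arcs, with the $\ell$ arcs carrying $\alpha$ each containing a vertex of $A$. Such a $C$ meets $A$ in at least $\ell$ distinct vertices, that is, it is an $\ell\cdot A$-cycle; conversely, any $\ell\cdot A$-cycle has at least $\ell$, and at least $3$, vertices, hence length at least $m$, and may therefore be partitioned into $m$ arcs that place one of its $A$-vertices in each of the $\ell$ designated arcs, realising it as a minimal labelled $C_m$-expansion. (Taking $m=3$ rather than $m=\ell$ is precisely what accommodates the cases $\ell\le 2$.) Since the edge set of such a minimal expansion is exactly the edge set of $C$, its total weight equals $\gamma(C)$; thus the expansion is a zero expansion exactly when $C$ is a zero cycle. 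Altogether, the minimal zero labelled $C_m$-expansions of $G$ are precisely the zero $\ell\cdot A$-cycles of $G$, and the covering side is then immediate, as any vertex set meeting every zero labelled $C_m$-expansion in particular meets every zero $\ell\cdot A$-cycle.

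The step I expect to be the main obstacle is the weight bookkeeping on the packing side, and this is exactly why the whole reduction is phrased for minimal expansions. A non-minimal labelled $C_m$-expansion may carry pendant trees whose edges contribute group weight, so that the expansion has total weight $0$ although the unique cycle it contains --- the only $\ell\cdot A$-cycle inside it --- does not; passing from a zero expansion to a zero cycle is therefore not automatic, and a naive appeal to the Erd\H os--P\'osa property of the family of \emph{all} zero labelled $C_m$-expansions would not transfer the packing outcome to zero $\ell\cdot A$-cycles. The fix is to apply Theorem~\ref{zerothm} at the level of cycles, where expansion weight and cycle weight coincide: there the packed $k$ disjoint objects are genuine zero $\ell\cdot A$-cycles and the hitting-set bound carries over verbatim, yielding the Erd\H os--P\'osa property for zero $\ell\cdot A$-cycles and hence the corollary.
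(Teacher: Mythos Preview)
Your reduction---label the vertices of $A$ with a single label $\alpha$, take $H=C_m$ with $m=\max\{\ell,3\}$ and $\ell$ of its vertices labelled $\alpha$, and invoke Theorem~\ref{zerothm}---is exactly the intended one, and your identification of minimal labelled $C_m$-expansions with $\ell\cdot A$-cycles is correct. The paper gives no separate proof of the corollary; it is meant to fall out of Theorem~\ref{zerothm} via precisely this translation.

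You are also right that the delicate point is the packing side: a \emph{non}-minimal zero labelled $C_m$-expansion may have pendant branches whose weights cancel the weight of its unique cycle, so that cycle need not be zero. Hence from the bare statement of Theorem~\ref{zerothm} you cannot conclude that $k$ disjoint zero labelled $C_m$-expansions yield $k$ disjoint zero $\ell\cdot A$-cycles. Your proposed fix, however---``apply Theorem~\ref{zerothm} at the level of cycles''---is not a deduction from the theorem's statement; as written it reads as simply restricting attention to minimal expansions, which the theorem does not license on its own.

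What actually closes the gap is an appeal to the \emph{proof} of Theorem~\ref{zerothm}. There, the $k$ disjoint zero labelled $H$-expansions are always exhibited inside a zero wall together with zero linkages, built entirely from zero subdivided edges (see the last line of the proof sketch). For $H=C_m$ the expansions obtained in this way are cycles assembled from such zero pieces, hence themselves zero cycles; since they pass through the labelled branch sets they are zero $\ell\cdot A$-cycles. In other words, the packing outcome of Theorem~\ref{zerothm} is in fact stronger than its statement records, and that stronger form is what the corollary uses. If you want a self-contained argument, you should say this explicitly rather than phrase it as ``applying the theorem at the level of cycles''.
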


We now indicate how the proof of Theorem~\ref{thm:2consimple} has to be adapted. 
We only provide a sketch of the proof as it simply amounts to combining 
techniques of Thomassen~\cite{Tho88} with the proof of our main theorem.

We formulate three lemmas. 
The  first lemma as well as its proof is a straightforward generalisation of an argument originally
made by Thomassen~\cite{Tho88}.
\begin{lemma}\label{zeropathlem}
Let $\Gamma$ be a finite abelian group, and let~$r$ be a positive integer.
If~$P$ is a path with (directed or undirected) $\Gamma$-weights and if
and $U\subseteq V(P)$ has size at least $r|\Gamma|$,
then there is a $W\subseteq U$ of size at least $r$ such that any subpath of~$P$ between two
vertices in~$W$ has zero weight. 
\end{lemma}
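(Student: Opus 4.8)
The plan is to use a pigeonhole argument along the path, tracking partial weights modulo membership in the group. First I would orient $P$ as $v_0 v_1 \ldots v_\ell$ and, for each vertex $v_i$, define its \emph{prefix weight} $w(v_i) = \gamma(v_0 v_1) + \ldots + \gamma(v_{i-1} v_i)$, with $w(v_0) = 0$. The key observation is that for any two vertices $v_i, v_j$ on $P$ with $i < j$, the subpath of $P$ between them has weight $w(v_j) - w(v_i)$; hence this subpath is a zero path if and only if $w(v_i) = w(v_j)$. (In the directed setting the same holds, using the remark in the excerpt that the direction of summation is irrelevant once the weight is~$0$.)

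Next I would apply the pigeonhole principle to the set $U$. The map $u \mapsto w(u)$ sends $U$ into the group $\Gamma$, which has $|\Gamma|$ elements. Since $|U| \geq r|\Gamma|$, some fibre of this map — some value $g \in \Gamma$ — has at least $\lceil |U|/|\Gamma| \rceil \geq r$ preimages in $U$. Take $W$ to be (a size-$r$ subset of) this fibre: $W = \{u \in U : w(u) = g\}$. Then any two vertices of $W$ have equal prefix weight, so by the observation above every subpath of $P$ between two vertices of $W$ is a zero path. This $W$ has size at least $r$ and is contained in $U$, as required.

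There is essentially no obstacle here; the only point requiring a little care is checking that ``subpath of $P$ between two vertices in $W$'' really does have the prefix-weight-difference formula, and in the directed case that this is well-defined regardless of which endpoint we regard as first — both are immediate from the definitions and the cancellation remark already established in the text. I would therefore keep the write-up short: set up prefix weights, state the difference formula, invoke pigeonhole, and conclude.
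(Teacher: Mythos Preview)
Your proposal is correct and is essentially the same argument as the paper's: define prefix weights along $P$, apply pigeonhole to the $|\Gamma|$ possible values, and use the telescoping identity $\gamma(vPw)=\gamma_w-\gamma_v$ to conclude. The only cosmetic difference is that the paper anchors the prefix sums at the first vertex of $U$ rather than the first vertex of $P$.
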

\begin{proof}
Let $u_0$ be the first vertex of $U$ along $P$, and for every $u\in V(P)$, denote by $\gamma_u$
the weight of the path from $u_0$ to $u$. As there are only $|\Gamma|$ many possible different values
for $\gamma_u$ but $|U\sm\{u_0\}|>(r-1)|\Gamma|$ it follows that there is a subset~$W$ of $U\sm\{u_0\}$
of size $r$ such that all the values $\gamma_w$, $w\in W$, coincide. 

Consider $v,w\in W$ such that $v$ lies on the subpath between $u_0$ and $w$. Then 
\[
\gamma(u_0Pv)+ \gamma(vPw)=\gamma(u_0Pw)=\gamma_w=\gamma_v=\gamma(u_0Pv),
\] 
which implies that $vPw$ is a zero path.
\end{proof}

The next lemma is also a straightforward generalisation of an observation of Thomassen~\cite{Tho88}.
A wall with (directed or undirected) group weights is a \emph{zero wall}
if all its subdivided edges are zero paths. 
\begin{lemma}\label{zerowalllem}
Let $\Gamma$ be a finite abelian group.
Then, for every positive integer $r$, 
there is an integer $s$ such that whenever $W$ is 
a wall of size $s$ with (directed or undirected) $\Gamma$-weights then $W$ has a zero subwall $W'$
of size $r$.
\end{lemma}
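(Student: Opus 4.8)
The plan is to follow Thomassen's strategy of iteratively passing to subwalls in which more and more subdivided edges are zero paths, using Lemma~\ref{zeropathlem} as the workhorse. The key observation is that the subdivided edges of a wall can be partitioned into a bounded number of families of pairwise vertex-disjoint paths: in an elementary $r$-wall every vertex has degree at most $3$, so by a simple colouring argument the edges split into $3$ matchings, and hence the subdivided edges of an $r$-wall split into $3$ families $\mathcal F_1,\mathcal F_2,\mathcal F_3$, where each $\mathcal F_i$ is a set of internally disjoint (in fact vertex-disjoint, since branch vertices are shared only by the original wall, not inside one matching class) subdivided edges. Actually the cleanest bookkeeping is: the horizontal subdivided edges form disjoint paths lying inside the horizontal paths, and the vertical subdivided edges form disjoint paths inside the vertical paths; so it suffices to handle ``all subdivided edges in a fixed direction along a fixed straight path'' at once.

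Concretely, first I would reduce to the elementary wall by noting that if the subdivision $W$ has a subwall $W'$ all of whose subdivided edges (as paths of $W$) are zero paths, then $W'$ is a zero wall in the required sense. Then I would set up the iteration. Think of the $r$-wall as built from its $r+1$ horizontal paths $H_0,\dots,H_r$ and $r+1$ vertical paths $V_0,\dots,V_r$. Along a single horizontal path $H_j$, the subdivided edges of the wall contained in $H_j$ are internally disjoint and together cover $H_j$; their endpoints are precisely the branch vertices on $H_j$. Applying Lemma~\ref{zeropathlem} to the path $H_j$ with $U$ the set of branch vertices on $H_j$, and choosing the starting size large enough, I get a large subset $W_j\subseteq U$ such that every subpath of $H_j$ between two vertices of $W_j$ is a zero path. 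Doing this for all horizontal paths simultaneously and then for all vertical paths, and intersecting the retained branch-vertex sets with the grid structure, yields a subwall in which every retained subdivided edge — which is now a concatenation of original subdivided edges between consecutive retained branch vertices, but in fact a single subpath of some $H_j$ or $V_i$ between two retained branch vertices — has zero weight. Tracking the constants: to go from an $r$-wall down to a subwall that is still an $(r/c)$-wall while killing one ``direction'', one needs to start from a wall of size roughly $c\cdot r\cdot|\Gamma|$; iterating over the two directions (horizontal, vertical) gives $s = O(r\,|\Gamma|^2)$, or with a cruder bound $s$ exponential in a constant times $|\Gamma|$ — either is fine since we only need existence of such an $s$.

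The main obstacle, and the place where one must be careful, is that a subdivided edge of a \emph{subwall} $W'$ is not in general a single original subdivided edge of $W$ but a path obtained by concatenating several of them through branch vertices of $W$ that got discarded — so ``zero subdivided edge of $W$'' is not literally enough. The fix, and the reason Lemma~\ref{zeropathlem} is phrased the way it is, is that along each straight path $H_j$ (or $V_i$) we do not merely ask that individual original subdivided edges be zero, but that \emph{every} subpath between two retained branch vertices be zero; Lemma~\ref{zeropathlem} delivers exactly this. One then has to check that after restricting to the common subwall structure, every subdivided edge of $W'$ is indeed such a subpath of a single horizontal or vertical path of $W$ — which holds by the definition of subwall (each horizontal/vertical path of $W'$ is a subpath of a unique one of $W$). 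With that in hand the verification that the resulting $W'$ is a zero wall is routine, and choosing $s$ large enough to survive the two rounds of size loss (once per direction) finishes the proof.
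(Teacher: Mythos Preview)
Your overall two-pass strategy and your careful observation that each subdivided edge of a subwall is a subpath of a single horizontal or vertical path of the ambient wall are both on target, and the shape of the argument matches the paper's. But there is a genuine gap at the step you call ``intersecting the retained branch-vertex sets with the grid structure,'' and this is also where your constant-tracking goes wrong.

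After applying Lemma~\ref{zeropathlem} to each horizontal path $H_j$ separately, you obtain a set $W_j$ of good branch vertices on $H_j$, equivalently a set of vertical paths whose intersections with $H_j$ lie in $W_j$. To form a subwall with all horizontal subdivided edges zero you need a \emph{single} set $S$ of vertical paths that works for every retained horizontal path at once, i.e.\ $S$ must lie inside every relevant $W_j$. But the $W_j$'s come from independent pigeonhole arguments and can be completely unrelated; their intersection may well be empty. You can salvage this with a product pigeonhole---colour each vertical path $V_i$ by the tuple of prefix sums along the $r{+}1$ horizontal paths you intend to keep and take a monochromatic class---but this costs a factor $|\Gamma|^{r+1}$ per direction, not $|\Gamma|$. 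So neither the bound $s=O(r\,|\Gamma|^2)$ nor ``$s$ exponential in a constant times $|\Gamma|$'' follows; the argument as you set it up only gives a bound exponential (indeed doubly exponential, once both directions are handled) in $r$. That still proves the lemma as stated, but the coordination problem across the different $H_j$'s is the actual obstacle here, not the one you flagged.

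The paper's sketch avoids this blow-up by a different mechanism. It first selects many vertical paths that are far apart in the big wall, applies Lemma~\ref{zeropathlem} to each of them independently (obtaining possibly unrelated sets of good branch vertices), and then uses the abundance of unused horizontal and vertical paths in between to \emph{reroute}: the horizontal paths of the new wall are threaded through the surplus wall structure so that on each selected vertical path they hit the $j$-th good vertex there, shifting level between consecutive selected vertical paths via the spare columns and rows. This rerouting is precisely what aligns the independently chosen good sets without an exponential loss, and it is the idea missing from your plan.
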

\begin{proof}[Proof sketch]
We begin by choosing many vertical paths pairwise sufficiently far apart. We apply Lemma~\ref{zeropathlem}
to each of the paths and then use the horizontal as well as the other vertical paths to find a still 
large subwall in which all subdivided edges of the vertical paths are zero paths. We repeat this process
(in the subwall) for the horizontal paths. 
\end{proof}


\begin{lemma}
Let $\Gamma$ be a finite abelian group.
For every positive integer $r$, there is an integer $s$ such that every 
$100s$-wall $W$ with (directed or undirected) $\Gamma$-weights
that has an $(\alpha,\beta)$-clean pair of linkages of size $s$
that are both in series, 
 has a zero $100r$-subwall $W'$ with an $(\alpha,\beta)$-clean pair of linkages of size $r$ 
such that both linkages are in series and zero.
\end{lemma}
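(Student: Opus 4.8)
The plan is to merge the zero-wall construction of Lemma~\ref{zerowalllem} with the linkage reductions of Lemmas~\ref{serieslem} and~\ref{linkredlem}; the only genuinely new point is that the linkage paths themselves must be turned into zero paths, and this is what forces $s$ to beat a power of $|\Gamma|$.

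I would start from the given in series $(\alpha,\beta)$-clean pair $(\cP,\cQ)$ of size $s$, which attaches to the top row of $W$ at $2s$ pairwise disjoint intervals in left-to-right order. First, arguing as in the proof of Lemma~\ref{serieslem} and shrinking $W$ by a constant factor, I would arrange that these intervals are widely separated and that each path $P$ of $\cP$ descends from its two nails along two vertical paths of $W$ to a common horizontal path $L$, makes a detour of bounded length above $L$ that meets $W$ in a single brick carrying an $\alpha$-labelled vertex on one of its subdivided edges, and returns; symmetrically for $\cQ$ with $\beta$. (In the applications the $\alpha$- and $\beta$-labels are supplied by thoroughly labelled bricks, so this normal form is exactly what the construction already produces.) Below $L$ the wall still has size about $100s$, and I would apply Lemma~\ref{zerowalllem} there to extract a zero subwall $W_0$ of size $100s'$ with $s'$ as large as needed relative to $r$ and $|\Gamma|$, chosen so that the vertical paths of $W_0$ are subpaths of vertical paths of $W$ that reach up to $L$.

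Next I would rebuild the linkage inside $W_0$: replace the two descending segments of each $P\in\cP$ by segments lying in vertical paths of $W_0$, which are zero paths because $W_0$ is a zero wall, so that the only weight left to control comes from the bounded part of $P$ above $L$, namely the two stretches between $L$ and the label brick and the passage through that brick. Here I would invoke Lemma~\ref{zeropathlem} a bounded number of times --- to the stretches of the vertical paths of $W$ between $L$ and the top row, and to a long path of $W$ through the label vertex of the brick, with the set $U$ a large set of vertices on both sides of that vertex, so as to obtain a zero subpath that still contains it --- each application discarding only a $1/|\Gamma|$-fraction of the paths of $\cP$ (and, analogously, of $\cQ$). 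After these reductions every surviving path is a concatenation of subpaths of $W_0$ with zero subpaths of $W$, hence a zero path, and a suitable subwall of $W_0$ of size $100r$ together with these paths is the desired object; the bookkeeping closes because the $|\Gamma|$-factor losses and the constant-factor wall shrinkings are all absorbed by choosing $s$ a large enough power of $|\Gamma|$ times $r$. The hard part, which dictates this whole detour, is exactly that turning $W_0$ into a zero wall does nothing to the weights of the fixed linkage paths: one has to spend the surplus of linkage paths to re-route their non-wall portions through zero subpaths, and the label vertices --- which cannot be moved --- are the obstruction to simply shortening those portions with Lemma~\ref{zeropathlem}.
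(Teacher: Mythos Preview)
Your normal-form step contains a genuine gap. By definition a $W$-linkage consists of $W$-paths, which are \emph{internally disjoint} from $W$; the $\alpha$- or $\beta$-labelled vertex on each linkage path therefore lies outside $W$, not in any brick. Your parenthetical that ``in the applications the labels are supplied by thoroughly labelled bricks'' is only partly true --- in outcomes~\ref{case2}\ref{case2b} and~\ref{case2c} of Theorem~\ref{thm:simpleflatwall} at least one of the two linkages comes from an external clean linkage, not from bricks --- and in any case it does not establish the lemma as stated. Once the labelled vertex is outside $W$, there is no ``long path of $W$ through the label vertex'' to which Lemma~\ref{zeropathlem} could be applied, and your per-path re-routing scheme breaks exactly where it must preserve the label: you cannot shorten or shift that portion of the path, and there is no surrounding wall structure to absorb its weight.

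The paper's argument avoids this entirely by a different and simpler device. After extracting a zero subwall below enough top rows (so the linkages can be reattached to it), it \emph{concatenates} all the extended linkage paths together with the intervening top-row segments into a single long path and applies Lemma~\ref{zeropathlem} once to that path. The resulting zero subpaths are then split back into a smaller in-series $(\alpha,\beta)$-clean pair: each zero subpath swallows at least one original linkage path and hence inherits a labelled vertex, and the in-series arrangement is preserved automatically. The point you missed is that the labelled vertex need not be located anywhere controllable; by concatenating, its weight is balanced against neighbouring linkage paths and top-row segments rather than against wall structure around it.
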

\begin{proof}[Proof sketch]
We first apply Lemma~\ref{zerowalllem} to a large subwall that leaves out enough rows at the top
so that we can reattach the pair of linkages to the resulting zero wall. Then we concatenate 
the paths in each of the linkages, with parts of the top row, to a very long path to which 
we
apply Lemma~\ref{zeropathlem}. We regain an $(\alpha,\beta)$-clean pair of linkages
by splitting up the obtained zero paths.
\end{proof}

\begin{proof}[Proof sketch for Theorem~\ref{zerothm}]
We follow very closely the strategy of the proof of Theorem~\ref{thm:2con}.
Lemma~\ref{largetanglelem} still yields a large tangle, and we can apply Theorem~\ref{thm:simpleflatwall} (we ignore the edge weights for this).
As before we then check the different outcomes.  
If the theorem yields  a large thoroughly labelled $K_t$ we also find, via Lemma~\ref{lem:rootedKn},
 a thoroughly labelled wall in there. This reduces outcome~(i) to outcome~(ii).
Outcome~(iii) still yields a hitting set as $H$ is $2$-connected. Outcome~(ii) can be reduced with Lemma~\ref{linkredlem}
to a wall with an $(\alpha,\beta)$-clean pair of linkages in series. 
Then we use Lemma~\ref{zeropathlem} to see that we also find a large such wall plus pair of linkages that are all zero. 
Finally, we apply Lemma~\ref{caseiiiclem} and note that each $H$-expansion we find 
will be a subdivision of the last wall (plus linkages) -- there all subdivided edges are zero.
\end{proof}

What happens if the host graphs are endowed with \emph{directed} group weights? 
First, as the weights now depend on the direction of the edges, we have to adjust
what it means for an $H$-expansion to be zero: summing up the weights of all of its edges
will not work. To make the definition a bit simpler, let us restrict ourselves to subcubic 
(labelled) graphs $H$. Then, an $H$-subdivision is \emph{zero}
if all its subdivided edges are zero paths. The same proof as before yields a version 
of Theorem~\ref{zerothm} for directed group weights. Indeed, as all our arguments depend
on zero paths the only critical point is Lemma~\ref{zeropathlem}, which holds 
for undirected as well as for directed group weights.

\section{Zero cycles with an infinite group}\label{infgroupsec}

Theorem~\ref{zerothm} may fail if its pre-conditions are weakened. If, for instance, we do not 
require $H$ to be $2$-connected then the conclusion will no longer follow: indeed, 
neither even $A$--$B$-paths, nor $A$-paths of a length divisible by~$6$ have the Erd\H os-P\'osa
property~\cite{BHJ18} -- without modularity constraints $A$--$B$-paths and $A$-paths have the property.

Similarly, Theorem~\ref{zerothm} may fail
if the edge weights come from an infinite group, rather than from a finite group. 
We demonstrate this with the group $\mathbb Z$ and zero cycles. 
Indeed, for every hitting set size $s$, we may choose $\ell>10s$
and construct a $\mathbb Z$-weighted graph $G$ that does not contain two disjoint zero cycles, but 
in which no vertex set of up to~$s$ vertices meets all zero cycles. 
For this, start with an $(\ell+1)\times (\ell+1)$-grid
to which for all rows, except for the last row, an additional edge between the first and the last vertex in the row are 
added; see Figure~\ref{intzerofig}. Let us call these additional edges \emph{wrap-around} edges.

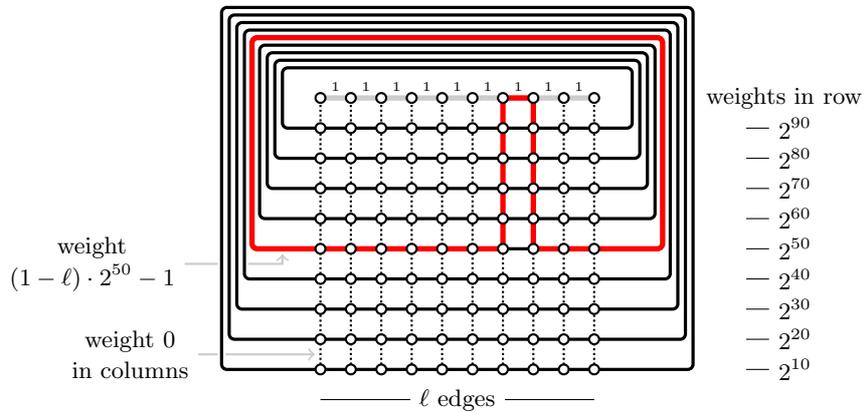
\begin{figure}[ht]
\centering
\begin{tikzpicture}
\tikzstyle{tinyvx}=[thick,circle,inner sep=0.cm, minimum size=1.3mm, fill=white, draw=black]

\def\gridsize{9}
\def\gridstep{0.4}
\pgfmathtruncatemacro{\sizeminone}{\gridsize-1}

\foreach \i in {0,...,\gridsize}{
  \draw[hedge] (0,\i*\gridstep) -- (\gridsize*\gridstep,\i*\gridstep);
  \draw[thick, densely dotted] (\i*\gridstep,0) -- (\i*\gridstep,\gridsize*\gridstep);
}

\node[align=center] (C) at (-2.5,0.5*\gridstep) {{\small  weight $0$}\\ {\small in columns}};
\draw[pointer] (C) -- ++(2.5,0);
\draw[white,line width=3pt] (-0.4-\gridsize*0.1,0) -- ++(0,0.5);

\node[align=center] (W) at (-3,3.5*\gridstep) {{\small weight}\\ {\small $(1-\ell)\cdot 2^{50}-1$}};
\draw[pointer] (W) -- ++(2.5,0) -- ++(0,0.5*\gridstep);
\draw[white,line width=3pt] (-0.4-\gridsize*0.1,3.1*\gridstep) -- ++(0,0.5);
\draw[white,line width=3pt] (-0.4-\gridsize*0.1+0.1,3.1*\gridstep) -- ++(0,0.5);
\draw[white,line width=3pt] (-0.4-\gridsize*0.1+0.2,3.1*\gridstep) -- ++(0,0.5);
\draw[white,line width=3pt] (-0.4-\gridsize*0.1+0.3,3.1*\gridstep) -- ++(0,0.5);

\foreach \j in {0,...,\sizeminone} {
  \pgfmathtruncatemacro{\invert}{\gridsize-\j}
  \draw[hedge,rounded corners=2pt] (\gridsize*\gridstep,\j*\gridstep) -- ++(0.4+\invert*0.1,0) -- ++(0,\invert*\gridstep+0.3+\invert*0.1) -- 
++(-\gridsize*\gridstep-0.8-2*\invert*0.1,0) -- ++(0,-\invert*\gridstep-0.3-\invert*0.1) -- (0,\j*\gridstep); 
}

\draw[line width=2pt, hellgrau] (0,\gridsize*\gridstep) -- ++(\gridsize*\gridstep,0);
\foreach \i in {1,...,\gridsize} {
  \node at (\i*\gridstep-0.5*\gridstep,\gridsize*\gridstep+0.15) {{\tiny $1$}};
}

\def\row{4}
\def\col{6}
\pgfmathtruncatemacro{\rowinvert}{\gridsize-\row}
\draw[line width=2pt, red,rounded corners=2pt] (0,\row*\gridstep) -- ++(\col*\gridstep,0) -- (\col*\gridstep,\gridsize*\gridstep) -- ++(\gridstep,0) 
-- (\col*\gridstep+\gridstep,\row*\gridstep) -- (\gridsize*\gridstep,\row*\gridstep) -- ++(0.4+\rowinvert*0.1,0) -- ++(0,\rowinvert*\gridstep+0.3+\rowinvert*0.1) -- 
++(-\gridsize*\gridstep-0.8-2*\rowinvert*0.1,0) -- ++(0,-\rowinvert*\gridstep-0.3-\rowinvert*0.1) -- cycle;

\foreach \i in {0,...,\gridsize}{
  \foreach \j in {0,...,\gridsize}{
    \node[tinyvx] (v\i\j) at (\i*\gridstep,\j*\gridstep) {};
  }
}

\node at (\gridsize*\gridstep+2.5,\gridsize*\gridstep) {{\small weights in row}};

\foreach \j in {0,...,\sizeminone}{
  \pgfmathtruncatemacro{\expo}{(\j+1)*10}
  \draw[thin] (\gridsize*\gridstep+2,\j*\gridstep) to ++(0.3,0) node[right]{{\small $2^{\expo}$}};
}

\draw[thin] (0,-\gridstep) to node[fill=white,midway]{{\small $\ell$ edges}} (\gridsize*\gridstep,-\gridstep);

\end{tikzpicture}
\caption{Zero cycles over $\mathbb Z$ do not have the Erd\H os-P\'osa property; a zero cycle is shown in red.}\label{intzerofig}
\end{figure}

All vertical edges of the grid receive weight~$0$ and all edges in the top row receive weight~$1$.
We define iteratively a weight $w_i$ for each edge in row $i$:
 for this choose a positive even integer
such that $w_i>\sum_{j=1}^{i-1}10\ell w_j$ (where we put $w_0=1$). 
The wrap-around edge between the first and last vertex in row~$i$
gets weight $-(\ell-1)w_i-1$.

Now let $C$ be a zero cycle. As $C$ cannot be contained in the union of all vertical edges,
and as the horizontal edges in the grid have positive weight, $C$ must contain a wrap-around
edge. Let $e$ be the wrap-around edge in $C$ with largest weight, and assume that $e$ 
is incident with row $i$. By the choice of weights, $C$ can then not contain any edges 
of rows $j>i$, except for edges from the top row -- indeed, such an edge would have a weight 
so large that it could never be canceled by the weights of the wrap-around edges in $C$. 
Moreover, to balance the weight of $e$, the cycle $C$ needs to contain exactly $\ell-1$ edges from row~$i$.
Due to parity, $C$ either must contain an odd number of wrap-around edges or an odd number of edges from 
the top row. One may deduce that $C$ cannot contain three or more wrap-around edges (as then 
always exactly $\ell-1$ other edges from that row must be in $C$ as well) -- thus $C$ can only 
contain edges from row $i$, plus an odd number of edges from the top row. As a consequence, 
every zero cycle traverses the grid from left to right and picks up at least one edge from the top row
in the process. This shows that there cannot be two disjoint zero cycles. On the other hand, 
as each row and column gives rise to a zero cycle, see Figure~\ref{intzerofig}, it is obvious
that $s$ vertices cannot meet all zero cycles.

Note: the same example works for directed weights, as long as all horizontal edges are directed 
from left to right, and the wrap-around edges from the last column to the first column.
The argumentation becomes more tedious, though.

\bibliographystyle{amsplain}
\bibliography{erdosposa}

\vfill

\small
\vskip2mm plus 1fill
\noindent
Version \today{}
\bigbreak

\noindent
Henning Bruhn
{\tt <henning.Bruhn@uni-ulm.de>}\\
Institut f\"ur Optimierung und Operations Research\\
Universit\"at Ulm\\
Germany\\

\noindent
Felix Joos
{\tt <f.joos@bham.ac.uk>}\\
School of Mathematics\\ 
University of Birmingham\\
United Kingdom\\

\noindent
Oliver Schaudt
{\tt <schaudt@mathc.rwth-aachen.de>}\\
Department of Mathematics\\
RWTH Aachen University\\
Germany\\

\end{document}